\definecolor{darkgreen}{rgb}{0,0.45,0}
\crefname{equation}{}{}
\crefname{thm}{Theorem}{Theorems}
\crefname{prop}{Proposition}{Propositions}
\theoremstyle{plain}
\newtheorem{thm}{Theorem}
\newtheorem{cor}[thm]{Corollary}
\newtheorem{lemma}[thm]{Lemma}
\newtheorem{prop}[thm]{Proposition}
\theoremstyle{remark}
\newtheorem{rmk}[thm]{Remark}
\theoremstyle{definition}
\newtheorem{df}[thm]{Definition}
\definecolor{mypurple}{rgb}{0.5, 0.0, 0.5}
\newcommand{\conv}{*}
\newcommand{\lhom}[2]{[#1,#2]_{\ell}}
\newcommand{\rhom}[2]{[#1,#2]_{r}}
\newcommand{\enr}[1]{\mathsf{#1}}
\newcommand{\ca}{\mathcal}
\newcommand{\nc}{\mathsf}
\newcommand{\B}{\mathbf}
\newcommand{\mi}{\textrm{-}}
\newcommand{\Set}{\nc{Set}}
\newcommand{\Cat}{\nc{Cat}}
\newcommand{\Vect}{\nc{Vect}}
\newcommand{\Sp}{\nc{Sp}}
\newcommand{\Grd}{\nc{Grd}}
\newcommand{\VSym}{\nc{Sp}(\ca{V})}
\newcommand{\VSp}{\nc{Sp}(\ca{V})}
\newcommand{\VP}{\ca{V}^\ca{P}}
\newcommand{\VGrd}{\nc{Grd}(\ca{V})}
\newcommand{\VOpd}{\nc{Opd}(\ca{V})}
\newcommand{\VCoopd}{\nc{Coopd}(\ca{V})}
\newcommand{\VOpdp}{\nc{Opd}_+(\ca{V})}
\newcommand{\VCoopdp}{\nc{Coopd}_+(\ca{V})}
\newcommand{\wt}{\widetilde}
\newcommand{\1}{\mathbf{1}}
\newcommand{\ot}{\otimes}
\newcommand{\Mon}{\nc{Mon}}
\newcommand{\Comon}{\nc{Comon}}
\newcommand{\Coalg}{\nc{Coalg}}
\newcommand{\I}{\mathbf{I}}
\newcommand{\J}{\mathbf{X}}
\newcommand{\X}{\mathbf{X}}
\newcommand{\VN}{\ca{V}^\ca{N}}
\newcommand{\op}{\mathrm{op}}
\newcommand{\id}{\mathrm{id}}
\newcommand{\Hom}{\mathrm{Hom}}
\DeclareMathOperator*\colim{colim}
\newcommand{\HOM}{\textrm{\scshape{Hom}}}
\tikzset{tick/.style={postaction={decorate,decoration={markings,mark=at position 0.5 with {\draw[-] (0,.4ex) -- (0,-.4ex);}}}}}
\begin{document}
\title{Duoidal categories, measuring comonoids and enrichment}

\author{Ignacio L\'{o}pez Franco}
\address{Departamento de Matem\'atica y Aplicaciones\\ CURE, Universidad de la
  Rep\'ublica\\Tacuaremb\'o s/n. Maldonado, Uruguay.}
\email{ilopez@cure.edu.uy}

\author{Christina Vasilakopoulou}
\address{Department of Mathematics, University of Patras, Greece}
\email{cvasilak@math.upatras.gr}

\begin{abstract}
  We extend the theory of Sweeder's measuring comonoids to the framework of
  duoidal categories: categories equipped with two compatible monoidal
  structures. We use one of the tensor products to endow the category of monoids
  for the other with an enrichment in the category of comonoids. The enriched
  homs are provided by the universal measuring comonoids. We study a number of
  duoidal structures on categories of graded objects and of species and the
  associated enriched categories, such as an enrichment of graded (twisted)
  monoids in graded (twisted) comonoids, as well as two enrichments of symmetric operads in
  symmetric cooperads.
\end{abstract}

\maketitle

\setcounter{tocdepth}{1}
\tableofcontents

\section{Introduction}

This paper ties the theories of duoidal categories and enriched
categories arising from universal measuring coalgebras. Both theories have seen
recent developments, as we set out below, but were previously unconnected.

Measuring coalgebras were introduced by M.~E.~Sweedler~\cite{Sweedler} as a
solution to the inconvenience that the dual vector space of an algebra need not
be a coalgebra. The universal measuring coalgebra $P(A,B)$ of a pair of algebras
$A$ and $B$ is characterised by the property that coalgebra morphisms
$C\to P(A,B)$ classify algebra morphisms $A\to\mathrm{Hom}(C,B)$. The coalgebra
$A^\circ=P(A,k)$, usually named the \emph{Sweedler dual} or \emph{finite dual}
of the algebra $A$ over the field $k$, plays an important role in the theory of
Hopf algebras.

Universal measuring coalgebras endow the category of algebras with an enrichment
in the category of coalgebras, an observation first made by G.~Wraith in
1968. The coalgebra $P(A,B)$ is the enriched hom from $A$ to $B$, from which one
can recover all the algebra morphisms $A\to B$ in the form of coalgebra
morphisms $k\to P(A,B)$, i.e., group-like elements. This hom, thus, has much
more information than just the algebra morphisms, for example, information about
derivations~\cite{Differenceoperators}.  Several authors have noticed that
vector spaces are unnecessary to construct universal measuring
coalgebras. Modules over a commutative Noetherian ring are used
in~\cite{MR1780737}, while \cite{AnelJoyal} uses differential-graded vector
spaces. In the latter, universal measuring coalgebras encode information about
the (co)bar construction, amongst other constructions on chain complexes.

Locally persentable braided monoidal closed categories are the most general
setting so far in which to develop the theory of measuring comonoids and the
associated enrichment, as is detailed in~\cite{Measuringcomonoid}. Even though
this setting is general enough to encompass the examples discussed so far, there
are
examples of interest that escape it. For example, (co)monads are
(co)monoids in a monoidal category endofunctors whose tensor product is highly
non-braided: functor composition. Operads provide another example of the same
nature. There is a monoidal category (in fact, there at least two equivalent
categories considered in the literature) equipped with a ``substitution'' tensor
product for which (co)monoids are (co)operads. If one thinks of an operad as a
sequence of spaces $A_n$ of $n$-ary operations (perhaps with an action of the
symmetric group), then the substitution of $A$ in $A$, denoted by $A\circ A$,
has $n$-ary operations that are formal substitutions $q(p_1,\dots,p_k)$ of
$m_i$-ary operators $p_i$ into $k$-ary operations $q$,
$\sum_{i=1}^km_i = n$. The multiplication $(A\circ A)_n\to A_n$ sends each
formal sustitution to the actual substitution. We encounter again the obstacle
that the substitution tensor product admits no braiding.

Duoidal categories are the answer to sidestepping the obstacles described
above. These have two interrelated monoidal structures, in the sense that one of
them is lax monoidal with respect to the other. The most vivid part of this
structure is an interchange law: a natural transformation with components
$(a\star b)\diamond(c\star d)\to (a\diamond c)\star (b\diamond d)$. The basics
of duoidal category theory were developed in~\cite{Species} along a large number
of examples of interest in algebraic combinatorics. Lax braided monoidal
categories are duoidal, with $\diamond=\star$ and the interchange law given by
braiding middle pair of objects. In this precise sense, duoidal categories
generalise braided categories.

We show the duoidal structure is rich enough to allow the definition of
measuring comonoid. Our main result is the existence of an enrichment of the
category of $\star$-monoids in the $\diamond$-monoidal category of
$\star$-comonoids, provided that the tensor product $\diamond$ is biclosed, the
duoidal category is locally presentable and the tensor product $\star$ is
accessible. The enriched homs are provided by the universal measuring comonoids.

We concentrate on the examples of graded objects and species in a sufficiently
good monoidal category. These are equipped with three monoidal structures: the
pointwise (or Hadamard) tensor product, the Cauchy tensor product (the usual
tensor product of graded objects), and the substitution tensor product. Several
duoidal structures are obtained by combining these monoidal structures. For
example, the combinations Hadamard--Cauchy and Hadamard--substitution yield
duoidal categories, considered in~\cite{Species} in the category of vector spaces. There is another
``convolution'' tensor product that together with substitution makes the
category of species a duoidal category, considered in~\cite{Commutativity} in the setting of a
cartesian closed category. The general theory developed herein produces
enriched categories from these examples. For instance, we give enrichments of the
category of symmetric operads in two different monoidal categories of symmetric
cooperads.

We give below a description of the paper's
structure. Section~\ref{sec:acti-mono-categ-1} gives an account of the
relationship between actions of monoidal categories and enriched categories, its
application to the construction of universal measuring comonoids and the
associated categories enriched in comonoids. The generalisation of measuring
comonoids to duoidal categories is introduced in
Section~\ref{sec:SweedlerTheoryduoid}. We prove in
Theorem~\ref{thm:sweedlerduoidal} that universal measuring comonoids exist and
yield an enrichment of monoids in comonoids, only with mild assumptions on the
duoidal category. Applications to categories of graded objects
$\Grd(\mathcal{V})$ and species $\Sp(\mathcal{V})$ in a sufficiently good
monoidal category $\mathcal{V}$ are in Section~\ref{sec:grad-objects-spec}. A
precise list of the enriched categories that we construct can be found in \cref{thm:allenrichments}.

\subsection*{Acknowledgements}
The authors would like to thank Martin Hyland, who provided the original
intution for the enrichment of operads in cooperads, and Marcelo Aguiar for
sharing his insights.
The first author acknowledges the partial support from {ANII} (Uruguay) and
PEDECIBA.
The second author would like to thank the General Secretariat for Research and
Technology (GSRT) and the Hellenic Foundation for Research and Innovation
(HFRI), as well as the CoACT, Macquarie University,
 where during her short stay as a Visiting Fellow, various aspects of this work were clarified.

\section{Actions of monoidal categories and enrichment}
\label{sec:acti-mono-categ-1}
\subsection{Actions of monoidal categories}
\label{sec:acti-mono-categ}

Perhaps the shortest way of describing a \emph{left lax action} of a monoidal category
$(\mathcal{V},\ot,i)$ on a category $\mathcal{C}$ is as a lax monoidal functor from
$\mathcal{V}$ to the strict monoidal category of endofunctors of
$\mathcal{C}$. The tensor product in the latter is given by composition. An
alternative description consists of a functor $\triangleright\colon
\mathcal{V}\times\mathcal{C}\to\mathcal{C}$ together with natural transformations
\begin{equation}
  \label{eq:1}
  \alpha_{x,y,a}\colon x\triangleright(y\triangleright a)\to (x\otimes
  y)\triangleright a
  \qquad
  \lambda_a\colon a\to i\triangleright a
\end{equation}
that satisfy coherence conditions,
analogous to those satisfied in a monoidal category. The one-sentence way of
describing this structure is to say that a left lax action of $\mathcal{V}$ is a
lax algebra for the pseudomonad $(\mathcal{V}\times-)$ on
$\Cat$.
Left \emph{oplax} actions of $\mathcal{V}$ are a dual version of the lax
actions, now with transformations
$\alpha_{x,y,a}\colon (x\otimes y)\triangleright a\to
x\triangleright(y\triangleright a)$ and $\lambda_a\colon i\triangleright a\to a$.

Finally, a \emph{pseudoaction,} or just \emph{action} of the monoidal category
$\mathcal{V}$, is a lax action whose transformations $a$ and $\lambda$ are invertible.
Actions of monoidal categories already appeared in~\cite{Benabou}, where they are
described as two-object bicategories.

\subsection{Parametrised ajunctions}
\label{sec:param-ajunct}
A parametrised adjunction between functors $F\colon\ca{A}\times\ca{B}\to\ca{C}$
and $G\colon\ca{B}^\op\times\ca{C}\to\ca{A}$ consists of adjunctions
$F(\mi,b)\dashv G(b,\mi)$ for each $b\in \mathcal{B}$, with unit
$a\to G(b,F(a,b))$ that is not only natural in $a\in \mathcal{A}$ but also
dinatural in $b\in\mathcal{B}$. Equivalently, the counit $F(G(b,c),b)\to c$
should be not only natural in $c$, but also dinatural in $b\in \mathcal{B}$. In
terms of homsets, we have isomorphisms $\mathcal{C}(F(a,b),c)\cong
\mathcal{A}(a,G(b,c))$ natural in all three variables $a\in\mathcal{A}$,
$b\in\mathcal{B}$, $c\in \mathcal{C}$.

There is a version of \emph{doctrinal adjunction} for parametrised adjunctions
between monoidal categories, whose details can be found
in~\cite[Prop.~3.2.3]{PhDChristina}. The natural transformations part follows analogously, considering mates under
parametrised adjunctions, see e.g. \cite[Prop.~2.11]{Multivariableadjunctions}.

\begin{lemma}\label{prop:doctrinalparameter}
 If $F\colon\ca{A}\times\ca{B}\to\ca{C}$ and $G\colon\ca{B}^\op\times\ca{C}\to\ca{A}$ are parametrised adjoints between monoidal categories, namely 
$F(\mi,b)\dashv G(b,\mi)$ for all $b\in\ca{B}$, oplax monoidal structures on $F$
bijectively correspond to lax monoidal structures on $G$. Furthermore, mateship
under the adjunctions establishes a bijection between monoidal natural
transformations $F\Rightarrow F'$ and monoidal natural transformations
$G'\Rightarrow G$.
\end{lemma}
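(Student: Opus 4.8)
The plan is to reduce everything to ordinary doctrinal adjunction (in the sense of Kelly) applied fibrewise, together with a coherence check that the fibrewise structures glue into genuine monoidal structures on the multivariable functors. First I would fix notation: write the tensor products on $\ca{A}$, $\ca{B}$, $\ca{C}$ all as $\ot$ with units all denoted $i$, and recall that an oplax monoidal structure on $F$ consists of natural maps $F(a\ot a', b\ot b')\to F(a,b)\ot F(a',b')$ and $F(i,i)\to i$ satisfying the usual hexagon/triangle axioms, while a lax monoidal structure on $G$ consists of maps $G(b,c)\ot G(b',c')\to G(b\ot b', c\ot c')$ and $i\to G(i,i)$ with the dual axioms. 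The key observation is that both data are natural transformations between composites of the functors $F$, $G$ and the tensor products, and each such transformation has a well-defined mate under the parametrised adjunction $F(\mi,b)\dashv G(b,\mi)$, by \cite[Prop.~2.11]{Multivariableadjunctions}; this mate correspondence is a bijection on the level of transformations.

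Next I would carry out the correspondence explicitly on the two pieces of structure. Starting from an oplax structure $(\phi, \phi_0)$ on $F$, the component $\phi_{a,a',b,b'}\colon F(a\ot a',b\ot b')\to F(a,b)\ot F(a',b')$ is a map in $\ca{C}$; taking its mate under the adjunction with parameter $b\ot b'$ on the left and using naturality, one extracts a map $G(b,c)\ot G(b',c')\to G(b\ot b', c\ot c')$, natural in $c,c'$ and dinatural in $b,b'$ — this is precisely a candidate lax structure $\psi$ on $G$. Similarly $\phi_0\colon F(i,i)\to i$ has as mate a map $i\to G(i,i)$. The content of the lemma is then that $(\phi,\phi_0)$ satisfies the oplax monoidal axioms if and only if $(\psi,\psi_0)$ satisfies the lax monoidal axioms. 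This is verified by the standard calculus of mates: each coherence diagram for $F$ is pasted from naturality squares, triangle identities, and the associativity/unit constraints of the three monoidal categories, and passing to mates turns such a pasting into the corresponding coherence diagram for $G$. Because mateship is a bijection, this gives the claimed bijective correspondence.

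For the second assertion, given monoidal (resp. oplax-monoidal) natural transformations $\theta\colon F\Rightarrow F'$, I would take the mate $\theta^{*}\colon G'\Rightarrow G$ under the two families of adjunctions $F(\mi,b)\dashv G(b,\mi)$ and $F'(\mi,b)\dashv G'(b,\mi)$; compatibility of $\theta$ with the oplax structures $\phi$, $\phi'$ translates, again by the naturality of the mates construction, into compatibility of $\theta^{*}$ with the lax structures $\psi'$, $\psi$, so $\theta^{*}$ is a monoidal natural transformation $G'\Rightarrow G$, and this assignment is inverse to taking mates in the other direction.

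The main obstacle I anticipate is purely bookkeeping rather than conceptual: keeping track of the parameter dinaturality in the $\ca{B}$-variable throughout, and making sure that the mate of a transformation that is multivariable (involving $b$ in two tensor slots $b\ot b'$) is formed with respect to the correct single adjunction, namely the one with parameter $b\ot b'$. This is exactly the point where \cite[Prop.~2.11]{Multivariableadjunctions} and the parametrised doctrinal adjunction of \cite[Prop.~3.2.3]{PhDChristina} do the heavy lifting, so in the write-up I would invoke those results to handle the compatibility of mates with composition and with the monoidal constraints, and limit the explicit verification to identifying the components of $\psi$ and $\psi_0$ and to one representative coherence diagram, leaving the remaining diagrams to the reader as ``entirely analogous''.
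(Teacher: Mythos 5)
Your proposal is correct and follows essentially the same route as the paper, which offers no proof of its own for this lemma and instead defers to \cite[Prop.~3.2.3]{PhDChristina} for the (op)lax structure correspondence and \cite[Prop.~2.11]{Multivariableadjunctions} for the mate calculus on transformations; your sketch of identifying the structure maps as mates under the adjunction with parameter $b\ot b'$ and transporting the coherence diagrams through the pasting-compatibility of mates is exactly the argument carried out in those references. The only cosmetic remark is that the resulting $\psi$ is genuinely natural in $b,b'$ (viewed in $\ca{B}^\op$) rather than merely dinatural --- the dinaturality of the parametrised counit is what produces this naturality --- but this does not affect the validity of your argument.
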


Parametrised adjoint of actions are of central importance in what follows. The following lemma establishes a bijective correspondence between action structures of $\ca{V}$ and $\ca{V}^{\mathrm{op,rev}}$, namely the opposite category with $a\ot^\mathrm{rev}b=b\ot a$.

\begin{lemma}
  \label{l:1}
  Let $\mathcal{V}$ be a monoidal category, $\mathcal{A}$ a category, and a
  parametrised adjunction between functors
  $\triangleright\colon\mathcal{V}\times\mathcal{A}\to\mathcal{A}$ and
  $\pitchfork\colon
  \mathcal{V}^{\mathrm{op}}\times\mathcal{A}\to\mathcal{A}$ via $(x\triangleright\mi)\dashv(x\pitchfork\mi)$. There exists a
  bijection between:
  \begin{enumerate*}[label=(\roman*)]
  \item
    associativity and unit transformations that make
    $\triangleright$ a lax action of $\mathcal{V}$;
  \item associativity and unit transformations that make $\pitchfork$ a oplax
    action of $\mathcal{V}^{\mathrm{op,rev}}$.
\end{enumerate*}
Furthermore, $\triangleright$ is an action if and only if the corresponding
$\pitchfork$ is an action.
\end{lemma}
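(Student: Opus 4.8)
The plan is to treat this as an instance of the parametrised doctrinal adjunction recorded in \Cref{prop:doctrinalparameter}, applied not to $\ca{V}$ itself but to the monoidal category of endofunctors. First I would unwind the one-sentence description given above: a left lax action of $\ca{V}$ on $\ca{A}$ is a lax monoidal functor $\ca{V}\to[\ca{A},\ca{A}]$, where $[\ca{A},\ca{A}]$ carries composition as tensor product (with unit $\id_\ca{A}$), and a left oplax action is an oplax monoidal functor into the same target. The parametrised adjunction $(x\triangleright\mi)\dashv(x\pitchfork\mi)$ says precisely that the functor $x\mapsto(x\triangleright\mi)$ into $[\ca{A},\ca{A}]$ factors, on objects, through functors that admit a right adjoint; assembling the adjoints, $x\mapsto(x\pitchfork\mi)$ defines a functor $\ca{V}^\op\to[\ca{A},\ca{A}]$.

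Next I would observe that taking pointwise right adjoints is a (contravariant) monoidal equivalence between the category of right-adjoint endofunctors inside $[\ca{A},\ca{A}]$ and itself: if $L\dashv R$ and $L'\dashv R'$ then $L'L\dashv RR'$, so composition is sent to composition in the opposite order, and $\id\dashv\id$. Thus a lax monoidal structure on $\ca{V}\to[\ca{A},\ca{A}]$, i.e. natural transformations $(x\triangleright\mi)(y\triangleright\mi)\Rightarrow((x\ot y)\triangleright\mi)$ and $\id\Rightarrow(i\triangleright\mi)$ subject to coherence, corresponds under mateship to natural transformations $((x\ot y)\pitchfork\mi)\Rightarrow(y\pitchfork\mi)(x\pitchfork\mi)$ and $(i\pitchfork\mi)\Rightarrow\id$ — which, after rewriting $x\ot y$ as $y\ot^{\mathrm{rev}}x$, are exactly the associativity and unit data for an oplax action of $\ca{V}^{\mathrm{op,rev}}$ on $\ca{A}$ (note the reversal of factors in the composite is absorbed by the $\mathrm{rev}$ on the tensor). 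The coherence axioms correspond because mateship is functorial with respect to pasting of $2$-cells; concretely, one invokes \Cref{prop:doctrinalparameter} with $\ca{A}$ there being $\ca{V}$, $\ca{B}$ there being $\ca{A}$, $\ca{C}$ there being $\ca{A}$, $F$ being $\triangleright$, $G$ being $\pitchfork$, to get the bijection between oplax monoidal structures on $\triangleright$ and lax monoidal structures on $\pitchfork$, and then transport along the contravariant monoidal equivalence $[\ca{A},\ca{A}]\simeq[\ca{A},\ca{A}]$ above so that "oplax on $\triangleright$" becomes "lax action" and "lax on $\pitchfork$" becomes "oplax action of the reversed category."

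Finally, for the last sentence: the associativity constraint of $\triangleright$ is invertible if and only if its mate is, since mateship sends isomorphisms to isomorphisms (the mate of an inverse is the inverse of the mate); the same applies to the unit constraint $\lambda$. Hence $\triangleright$ is a pseudoaction exactly when $\pitchfork$ is.

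I expect the main obstacle to be bookkeeping rather than conceptual: one must be careful that the variance reversal $\ca{V}\rightsquigarrow\ca{V}^{\mathrm{op,rev}}$ is exactly what is needed so that the oplax-action associativity $(x\ot^{\mathrm{rev}}y)\pitchfork a\to x\pitchfork(y\pitchfork a)$ matches the mate of the lax-action associativity $x\triangleright(y\triangleright a)\to(x\ot y)\triangleright a$, and that the direction of the unit $2$-cells flips correctly ($\lambda\colon a\to i\triangleright a$ matched with $i\pitchfork a\to a$). Keeping the two "op"s straight — one on the category of $\ca{V}$, one on the $1$-cell direction in $[\ca{A},\ca{A}]$ under taking adjoints — is the only place an error could creep in; once the dictionary is fixed, the coherence conditions transfer formally by \Cref{prop:doctrinalparameter} and the well-known $2$-functoriality of mate correspondence.
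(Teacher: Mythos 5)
Your proof is correct and takes essentially the same route as the paper: both arguments rest on the mate correspondence for the adjunctions $(x\triangleright\mi)\dashv(x\pitchfork\mi)$, under which $\alpha\colon x\triangleright(y\triangleright a)\to(x\otimes y)\triangleright a$ and $\lambda\colon a\to i\triangleright a$ transpose to $(x\otimes y)\pitchfork a\to y\pitchfork(x\pitchfork a)$ and $i\pitchfork a\to a$, with the $\mathrm{op}$ and $\mathrm{rev}$ accounted for exactly as you describe, coherence transferring by $2$-functoriality of mates and invertibility being preserved. The only difference is cosmetic packaging via lax monoidal functors into $([\mathcal{A},\mathcal{A}],\circ)$; just note that \cref{prop:doctrinalparameter} does not literally apply as you cite it (the adjunction there is parametrised in the other variable and $\mathcal{A}$ carries no monoidal structure), but your direct mate computation already carries the whole proof, so nothing is lost.
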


\begin{proof}
  The proof is an application of
  general fact about mates: given functors $F\dashv F^r$ and $G\dashv G^r$,
  mateship induces a bijection between natural transformations $F\Rightarrow G$
  and $G^r\Rightarrow F^r$. Furthermore, one is invertible if and only if the
  other is so. 

  Applying the above to the adjunctions
  $(x\triangleright\mi)\dashv(x\pitchfork\mi)$,
  we have a bijection between natural transformations
  $\alpha_{x,y,a}\colon x\triangleright(y\triangleright {a})\Rightarrow (x\otimes
  y)\triangleright{a}$ and natural transformations
  $\bar\alpha_{a,x,y}\colon (x\otimes y)\pitchfork {a}\Rightarrow
  y\pitchfork(x\pitchfork a)$,
  natural $a\in \mathcal{A}$, $x,y\in \mathcal{V}$. 

  Similarly, there is a
  bijection between transformations
  $\lambda_a\colon a \Rightarrow (i\triangleright{a})$ and
  $\bar\lambda_a\colon i\pitchfork a\Rightarrow a$. The verification that
  that $\alpha,\lambda$ satisfy the axioms of a lax action if
  and only if $\bar\alpha,\bar\lambda$ satisfy those of a oplax action is
  straightforward. Finally, $\alpha$ is invertible if and only if $\bar \alpha$
  is so, and $\lambda$ is invertible if and only if $\bar\lambda$ is so.
\end{proof}

\begin{rmk}
  \label{rmk:2}
  If the monoidal category $\mathcal{V}$ of Lemma~\ref{l:1} has a braiding
  $\gamma$, then $\pitchfork$ can be made into an action of
  $\mathcal{V}^{\mathrm{op}}$ using the strong monoidal structure on the
  identity functor $\mathcal{V}\to\mathcal{V}^{\mathrm{rev}}$ induced by
  $\gamma$. Expicitly, the associativity of the action of $\mathcal{V}$ has
  components
  \begin{equation}
    \label{eq:11}
    x\pitchfork (y\pitchfork a)\cong (y\otimes x)\pitchfork a
    \xrightarrow{\gamma_{x,y}\pitchfork 1}
    (x\otimes y)\pitchfork a
  \end{equation}
  where the unlabelled isomorphism is the associativity of the action of
  $\mathcal{V}^{\mathrm{rev}}$ provided by the lemma.
\end{rmk}

\begin{df}
  \label{df:1}
  A left action $\triangleright\colon\mathcal{V}\times\mathcal{A}\to\mathcal{A}$ of
  the monoidal category $\mathcal{V}$ is said to be \emph{closed} if
  $\triangleright$ has a parametrised right adjoint
  $\mathcal{A}^{\mathrm{op}}\times\mathcal{A}\to \mathcal{V}$, which we call the
  \emph{enriched hom}. 
\end{df}
If, in the definition above, we denote the parametrised right adjoint by $P$,
then, we have natural bijections $\mathcal{A}(x\triangleright
a,b)\cong\mathcal{V}(x,P(a,b))$, for $x\in\mathcal{V}$, $a,b\in\mathcal{A}$.

For example, the regular left 
action of a monoidal category $\mathcal{V}$ on itself,
via the tensor product, is a closed left  
action precisely when
$\mathcal{V}$ is a monoidal left 
closed category.

\subsection{Enrichment, tensor and cotensor products}
\label{sec:tens-enrichm-acti}

The widespread use of enriched categories in modern Mathematics alleviates the
need of recalling all the details of their definition; for a detailed exposition, see \cite{Kelly}. The monoidal category
used as base for the enrichment is most of the time a symmetric monoidal closed
category; think of the category of abelian groups, or the category of simplicial
sets. In some of the examples we shall develop in later secions, the base of
enrichment is not even braided. In this context the theory of enriched
categories differs very little from the theory of categories enriched in a
bicategory; see for example~\cite{Varthrenr}. We shall recall some the parts of
the theory that makes it distinct from the case of symmetric monoidal
categories, taking the opportunity to set out our conventions, the absence of
which can lead to confusion. 

Let $(\mathcal{V},\ot,i)$ be a monoidal category. The first convention we set is the
order of composition. If $\enr{A}$ is a $\mathcal{V}$-category, its composition
is given by morphisms $\enr{A}(b,c)\otimes\enr{A}(a,c)\to\enr{A}(a,c)$.

The underlying category of a $\mathcal{V}$-category $\enr{A}$, usually denoted
by $\enr{A}_0$, has the same objects as $\enr{A}$ and homsets
$\enr{A}_0(a,b)=\mathcal{V}(i,\enr{A}(a,b))$. We shall sometimes write
$\mathcal{A}$ for $\enr{A}_0$, when no confusion is likely.

Now assume that the monoidal category $\mathcal{V}$ is left closed, so we have
natural isomorphisms $\mathcal{V}(x\otimes y,z)\cong\mathcal{V}(x,\lhom{y}{z})$. Given
objects $x\in\mathcal{V}$ and $a\in \enr{A}$, a \emph{tensor} of $a$ by $x$ is
an object $x\triangleright a$ of $\enr{A}$ together with a morphism $\eta\colon
x\to\enr{A}(a,x\triangleright  a)$ that satisfies: the morphism
$\enr{A}(x\triangleright a,b)\to\lhom{x}{\enr{A}(a,b)}$ corresponding to the composition
\begin{equation}
  \label{eq:2}
  \enr{A}(x\triangleright  a,b)\otimes x\xrightarrow{1\otimes\eta}
  \enr{A}(x\triangleright  a,b)\otimes\enr{A}(a,x\triangleright  a)\xrightarrow{\mathrm{comp}}
  \enr{A}(a,b)
\end{equation}
is an isomorphism, for all $b\in \enr{A}$.  Note that the isomorphism
$\enr{A}(x\triangleright  a,b)\cong\lhom{x}{\enr{A}(a,b)}$ is $\mathcal{V}$-natural in $b$. If
$\mathcal{A}$ is the underlying category of $\enr{A}$, there is an induced
bijection $\mathcal{A}(x\triangleright  a,b)\cong\mathcal{V}(x,\enr{A}(a,b))$, that, being
natural in $b\in\mathcal{A}$, gives rise to an ordinary functor
$\triangleright  \colon \mathcal{V}\times\mathcal{A}\to\mathcal{A}$.

Tensor products have a dual notion, called cotensor products. The monoidal
category $\mathcal{V}$ needs now to be right closed, so we have natural
isomorphisms $\mathcal{V}(x\otimes y,z)\cong\mathcal{V}(y,\rhom{x}{z})$. Given
objects $x\in \mathcal{V}$ and $b$ in the $\mathcal{V}$-category $\enr{A}$, a
\emph{cotensor} of $b$ by $x$ is an object $x\pitchfork b\in \enr{A}$ together with a
morphism $\eta\colon x\to \enr{A}(x\pitchfork b,b)$ in $\mathcal{V}$ that
satisfies: the morphism $\enr{A}(a,x\pitchfork b)\to\rhom{x}{\enr{A}(a,b)}$
corresponding to the composition
\begin{equation}
  \label{eq:3}
  x\otimes\enr{A}(a,x\pitchfork b)
  \xrightarrow{\eta\otimes 1}
  \enr{A}(x\pitchfork b,b)\otimes \enr{A}(a,x\pitchfork b)
  \xrightarrow{\mathrm{comp}}
  \enr{A}(a,b)
\end{equation}
is an isomorphism, for all $a\in \enr{A}$.
Analogously to the previous paragraph, there is a natural bijection
$\mathcal{A}(a,x\pitchfork b)\cong\mathcal{V}(x,\enr{A}(a,b))$ and an ordinary functor
$\pitchfork\colon
\mathcal{V}^{\mathrm{op}}\times\mathcal{A}\to\mathcal{A}$.

In the case when the monoidal category $\mathcal{V}$ is biclosed, there is a
parametrised adjunction between $\triangleright$ and $\pitchfork$, presented by the first natural
bijection of
\begin{equation}\label{eq:tens-cotens-hom}
\mathcal{A}(x\triangleright a,b)\cong\mathcal{A}(a,x\pitchfork b)\cong\mathcal{V}(x,\enr{A}(a,b))
\end{equation}
capturing the sometimes called tensor-cotensor-hom adjunction.

In the absence of a braiding for the monoidal category $\mathcal{V}$, there is
is no obvious way of defining the opposite $\mathcal{V}$-category $\enr{A}$. We
are able, though, to define a $\mathcal{V}^{\mathrm{rev}}$-category that we
write $\enr{A}^{\mathrm{rev}}$. It has the same objects as $\enr{A}$, and homs
$\enr{A}^{\mathrm{rev}}(a,b)=\enr{A}(b,a)$. The verification that the
composition and identities of $\enr{A}$ make $\enr{A}^{\mathrm{rev}}$ a
$\mathcal{V}^{\mathrm{rev}}$-category is straightforward. Observe that the
underlying category of $\enr{A}^{\mathrm{rev}}$ is the opposite of the
underlying category of $\enr{A}$; in symbols
$(\enr{A}^{\mathrm{rev}})_0=(\enr{A}_0)^{\mathrm{op}}$.

Left internal homs for $\mathcal{V}$ are right internal homs for
$\mathcal{V}^{\mathrm{rev}}$. This, and an inspection of the definitions of tensor and
cotensor products given above, will tell us that a cotensor product
$x\pitchfork a$ in the $\mathcal{V}$-category $\enr{A}$ is the same as a tensor
product $x\triangleright a$ in the $\mathcal{V}^{\mathrm{rev}}$-category
$\enr{A}^{\mathrm{rev}}$.

\subsection{Actions and enrichment}
\label{sec:actions-enrichment}

There is a close relationship between actions and enriched
categories, explored in \cite{enrthrvar} in the context of bicategories, and
\cite[\S 6]{AnoteonActions} in the context of monoidal categories. (The
later article calls right closed what we call left closed.) We state part of the said
relationship in the following theorem.
\begin{thm}
  \label{thm:1}
  Let
  $\triangleright\colon\mathcal{V}\times\mathcal{A}\to\mathcal{A}$ be a closed left
  action of the monoidal category
  $\mathcal{V}$ on the category $\mathcal{A}$, with enriched hom $P\colon
  \mathcal{A}^{\mathrm{op}}\times\mathcal{A}\to\mathcal{V}$. Then, there exists
  a $\mathcal{V}$-category $\enr{A}$ with
  \begin{enumerate*}[label=(\roman*)]
  \item
    the same objects as $\mathcal{A}$;
  \item homs $\enr{A}(a,b)=P(a,b)$;
  \item underlying category $\mathcal{A}$.
  \end{enumerate*}
  Moreover, if $\mathcal{V}$ is left closed, then $x\triangleright a$ is the
  tensor product of $a\in\enr{A}$  by $x\in \mathcal{V}$.
\end{thm}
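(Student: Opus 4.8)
The plan is to construct $\enr{A}$ directly from the closed action and its enriched hom $P$, and then verify the $\mathcal{V}$-category axioms by transporting the associativity and unit data of the action across the defining adjunctions $\mathcal{A}(x\triangleright a,b)\cong\mathcal{V}(x,P(a,b))$. First I would define the composition morphism $P(b,c)\otimes P(a,b)\to P(a,c)$ as the mate, under this adjunction, of a suitable evaluation map; concretely, one builds a morphism $(P(b,c)\otimes P(a,b))\triangleright a\to c$ by using the lax associativity $\alpha$ of the action to identify $(P(b,c)\otimes P(a,b))\triangleright a$ with $P(b,c)\triangleright(P(a,b)\triangleright a)$, then applying the counits of the two adjunctions in turn (first $P(a,b)\triangleright a\to b$, then $P(b,c)\triangleright b\to c$). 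Transposing gives the composition. Similarly, the unit $i\to P(a,a)$ is the mate of the unit constraint $\lambda_a\colon a\to i\triangleright a$ of the action — or rather, of its inverse, since $\triangleright$ is a genuine (pseudo) action, so that $i\triangleright a\cong a$ supplies the required map.

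The key steps, in order, are: (1) define composition and identities as above via mates; (2) check associativity of composition — this is the heart of the argument and amounts to a diagram chase showing that two ways of reducing $(P(c,d)\otimes P(b,c)\otimes P(a,b))\triangleright a$ to $d$ agree, which follows from the pentagon-type coherence axiom for $\alpha$ together with naturality of the counits; (3) check the left and right unit laws, which follow from the triangle-type coherence axiom relating $\alpha$ and $\lambda$; (4) identify the underlying category: unwinding the definition, $\enr{A}_0(a,b)=\mathcal{V}(i,P(a,b))\cong\mathcal{A}(i\triangleright a,b)\cong\mathcal{A}(a,b)$ using the unit isomorphism of the action, and one checks this bijection is compatible with composition, so $\enr{A}_0\cong\mathcal{A}$ as categories; (5) for the last sentence, assume $\mathcal{V}$ is left closed and verify that $x\triangleright a$, equipped with the unit $\eta\colon x\to P(a,x\triangleright a)$ obtained as the mate of $\mathrm{id}_{x\triangleright a}$, satisfies the defining property of a tensor, i.e.\ that the induced map $\enr{A}(x\triangleright a,b)\to\lhom{x}{\enr{A}(a,b)}$ of \eqref{eq:2} is invertible — this is essentially the statement that the adjunction isomorphism $\mathcal{A}(x\triangleright a,b)\cong\mathcal{V}(x,P(a,b))$ internalises, which one gets by combining the closed-action adjunction with left-closedness of $\mathcal{V}$ and the Yoneda lemma.

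The main obstacle I anticipate is step (2), the associativity of composition. The difficulty is bookkeeping: one must carefully track how the lax associator $\alpha$ of the action interacts with the two counits, and confirm that the relevant hexagon commutes. The cleanest route is to argue on transposes throughout — never actually writing the composition morphism, but instead checking the equality of the two composites $(P(c,d)\otimes P(b,c)\otimes P(a,b))\triangleright a\to d$ — so that the verification reduces to the action's own coherence axiom plus naturality, with no internal-hom manipulations at all. Steps (3)–(5) are then routine in the same style. I would also remark that this is a known construction (the theorem cites \cite{enrthrvar} and \cite{AnoteonActions}), so the proof can legitimately be kept brief, indicating the transposition strategy and the coherence axioms invoked rather than rendering every diagram.
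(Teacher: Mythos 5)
Your proposal matches the paper's construction exactly: the paper defines the composition as the transpose of $(P(b,c)\otimes P(a,b))\triangleright a\cong P(b,c)\triangleright(P(a,b)\triangleright a)\to P(a,b)\triangleright a\to c$ (its \eqref{eq:19}), takes the identity from the unit isomorphism of the action, and defers the coherence verifications to the cited references, just as you outline. Your treatment of the underlying category and of the tensor product via representables and Yoneda likewise agrees with how the paper handles the analogous point in Proposition~\ref{prop:1}, so the proposal is correct and takes essentially the same route.
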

The composition of the $\mathcal{V}$-category $\mathsf{A}$ has components
$P(b,c)\otimes P(a,b)\to P(a,c)$ that correspond under the parametrised
adjunction to the following composite of action associativity and counits.
\begin{equation}
  (P(b,c)\otimes P(a,c))\triangleright a
  \cong
  P(b,c)\triangleright(P(a,c)\triangleright a)
  \to
  P(a,c)\triangleright a
  \to
  c
  \label{eq:19}
\end{equation}

Part of the following result was established in \cite[Lem.~2.1]{AnoteonActions}, however in the presence of symmetry which is here avoided.
\begin{prop}
  \label{prop:1}
  Let $\triangleright\colon\mathcal{V}\times\mathcal{A}\to\mathcal{A}$ be a
  closed action of the right closed monoidal category $\mathcal{V}$, and $\enr{A}$
  the associated $\mathcal{V}$-category, as in Theorem~\ref{thm:1}. The
  following statements are equivalent.
  \begin{enumerate}
  \item \label{item:1}
    For each $x\in\mathcal{V}$ the endofunctor $(x\triangleright-)$ of
    $\mathcal{A}$ has a right adjoint.
  \item \label{item:2}
    $\enr{A}$ has cotensor products.
  \end{enumerate}
\end{prop}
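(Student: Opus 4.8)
The plan is to prove both implications by exhibiting the relevant natural isomorphisms on hom-sets and then invoking the definition of cotensor products from \S\ref{sec:tens-enrichm-acti}. Throughout I write $\rhom{x}{z}$ for the right internal hom of $\mathcal{V}$, so $\mathcal{V}(x\ot y,z)\cong\mathcal{V}(y,\rhom{x}{z})$, and I recall that by Theorem~\ref{thm:1} and closedness of the action we have $\mathcal{A}(x\triangleright a,b)\cong\mathcal{V}(x,\enr{A}(a,b))$ naturally.

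\textbf{From \ref{item:1} to \ref{item:2}.} Suppose $(x\triangleright-)$ has a right adjoint; denote it $x\pitchfork-\colon\mathcal{A}\to\mathcal{A}$, so $\mathcal{A}(x\triangleright a,b)\cong\mathcal{A}(a,x\pitchfork b)$ naturally in $a,b$. I claim $x\pitchfork b$ is a cotensor of $b$ by $x$ in $\enr{A}$. First I produce the structure morphism $\eta\colon x\to\enr{A}(x\pitchfork b,b)$: it is the transpose under $\mathcal{A}(x\triangleright(x\pitchfork b),b)\cong\mathcal{V}(x,\enr{A}(x\pitchfork b,b))$ of the counit $x\triangleright(x\pitchfork b)\to b$. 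To check the defining property I must show that the induced map $\enr{A}(a,x\pitchfork b)\to\rhom{x}{\enr{A}(a,b)}$ built from $\eta$ and composition as in \eqref{eq:3} is an isomorphism for all $a$. Rather than compute with composition directly, I would verify the universal property it encodes: that $\mathcal{A}(a,x\pitchfork b)\cong\mathcal{V}(x,\enr{A}(a,b))$ naturally in $a$. This follows by chaining the right-adjoint isomorphism with the enriched-hom isomorphism of Theorem~\ref{thm:1}:
\begin{equation}
  \mathcal{A}(a,x\pitchfork b)\cong\mathcal{A}(x\triangleright a,b)\cong\mathcal{V}(x,\enr{A}(a,b)).
\end{equation}
The remaining task is to identify the composite of these two isomorphisms with the canonical map determined by $\eta$ and $\mathrm{comp}$ — this is the compatibility one checks by a (routine but slightly fiddly) diagram chase using the triangle identities for the adjunction $(x\triangleright-)\dashv(x\pitchfork-)$ together with the associativity and counit coherence used to define composition in \eqref{eq:19}.

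\textbf{From \ref{item:2} to \ref{item:1}.} Conversely, suppose $\enr{A}$ has all cotensors; write $x\pitchfork-$ for the assignment $b\mapsto x\pitchfork b$. By the characterising bijection of cotensors recalled after \eqref{eq:3}, $\mathcal{A}(a,x\pitchfork b)\cong\mathcal{V}(x,\enr{A}(a,b))$ naturally in $a$ (for each fixed $b$); functoriality in $b$ makes $x\pitchfork-$ a functor $\mathcal{A}\to\mathcal{A}$. Combining with Theorem~\ref{thm:1} again gives $\mathcal{A}(x\triangleright a,b)\cong\mathcal{V}(x,\enr{A}(a,b))\cong\mathcal{A}(a,x\pitchfork b)$ naturally in both $a$ and $b$, which exhibits $x\pitchfork-$ as right adjoint to $x\triangleright-$.

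I expect the main obstacle to be the \emph{naturality bookkeeping}, not any single hard idea: in the biclosed case one would simply read everything off \eqref{eq:tens-cotens-hom}, but here $\mathcal{V}$ is only assumed right closed, so I must be careful to use only the right internal hom $\rhom{-}{-}$ and to track in which variable each isomorphism is $\mathcal{V}$-natural versus merely $\Set$-natural. Concretely, the delicate point is confirming that the cotensor defined abstractly via the adjunction $(x\triangleright-)\dashv(x\pitchfork-)$ really is the cotensor \emph{of the enrichment} $\enr{A}$ — i.e. that the structure map $\eta$ and the composition of $\enr{A}$ fit together as in \eqref{eq:3}. This reduces, after transposing along the parametrised tensor-hom adjunction of Theorem~\ref{thm:1}, to the commutativity of a diagram whose cells are instances of the action-associativity coherence and the triangle identities; I would dispatch it by this transposition rather than by manipulating internal-hom adjuncts directly.
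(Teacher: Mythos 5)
Your direction (\ref{item:2})$\Rightarrow$(\ref{item:1}) is fine and is essentially the paper's argument. The gap is in (\ref{item:1})$\Rightarrow$(\ref{item:2}): you assert that the universal property encoded by ``$\theta\colon\enr{A}(a,x\pitchfork b)\to\rhom{x}{\enr{A}(a,b)}$ is an isomorphism'' is the hom-set bijection $\mathcal{A}(a,x\pitchfork b)\cong\mathcal{V}(x,\enr{A}(a,b))$ natural in $a$, and you then produce exactly that bijection by chaining the adjunction for $x\triangleright\mi$ with the enriched-hom adjunction. But these are not equivalent: the cotensor property of \S\ref{sec:tens-enrichm-acti} demands that $\theta$ be invertible \emph{in} $\mathcal{V}$, whereas the hom-set bijection only amounts (after matching it with the canonical map, as you propose) to the invertibility of $\mathcal{V}(i,\theta)$, i.e.\ to $x\pitchfork b$ being a cotensor in the underlying category. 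Since $i$ need not be a generator, this is strictly weaker, and indeed if the Set-level bijection were all that is required the proposition would be nearly vacuous — the whole content is that the ordinary adjunction upgrades to the enriched universal property.

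The missing step is to test $\theta$ against $\mathcal{V}(y,-)$ for \emph{arbitrary} $y\in\mathcal{V}$, not just $y=i$, and conclude by Yoneda. This is where the hypothesis that $\triangleright$ is an action (not merely a parametrised left adjoint) enters: one writes
\begin{equation*}
  \mathcal{V}(y,\enr{A}(a,x\pitchfork b))
  \cong
  \mathcal{A}(y\triangleright a,x\pitchfork b)
  \cong
  \mathcal{A}(x\triangleright(y\triangleright a),b)
  \cong
  \mathcal{A}((x\otimes y)\triangleright a,b)
  \cong
  \mathcal{V}(x\otimes y,\enr{A}(a,b))
  \cong
  \mathcal{V}(y,\rhom{x}{\enr{A}(a,b)}),
\end{equation*}
using the associativity constraint $x\triangleright(y\triangleright a)\cong(x\otimes y)\triangleright a$ in the middle, and then checks that this composite is induced by $\theta$. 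This is exactly \eqref{eq:6} in the paper's proof. Your proposal contains all the ingredients (you even invoke the action associativity in your closing remarks about the diagram chase), but as written the argument stops at the underlying bijection and never establishes invertibility of $\theta$ itself.
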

\begin{proof}
  Recall that we have an adjunction
  $\mathcal{A}(x\triangleright a,b)\cong\mathcal{V}(x,\enr{A}(a,b))$. Assuming
  (\ref{item:1}) we shall prove (\ref{item:2}). Let us denote the right adjoint of
  $x\triangleright\mi$ by $x\pitchfork\mi$. For objects $x\in\mathcal{V}$ and
  $b\in\mathcal{A}$, define a morphism
  $\eta\colon x \to\enr{A}(x\pitchfork b,b)$ corresponding to the component
  $x\triangleright (x\pitchfork b)\to b$ of the unit.
  We shall show that the morphisms
  $\theta\colon \enr{A}(a,x\pitchfork b)\to\rhom{x}{\enr{A}(a,b)}$ induced by $\eta$ are
  isomorphisms, for all $a\in\mathcal{A}$. The result of applying the
  representable functor $\mathcal{V}(y,-)$ to $\theta$ can be written as the
  following composition of isomorphisms.
  This for all $y$, hence $\theta$ is an isomorphism.
  \begin{multline}
    \label{eq:6}
    \mathcal{V}(y,\enr{A}(a,x\pitchfork b))
    \cong
    \mathcal{A}(y\triangleright a,x\pitchfork b)
    \cong
    \mathcal{A}(x\triangleright (y\triangleright a),b)
    \cong
    \mathcal{A}((x\otimes y)\triangleright a,b)
    \cong
    \mathcal{V}(x\otimes y,\enr{A}(a,b))
    \\
    \cong
    \mathcal{V}(y,\rhom{x}{\enr{A}(a,b)})
  \end{multline}

  We now assume (\ref{item:2}) and shall prove (\ref{item:1}). If the cotensor
  product $x\pitchfork b$ exists, then, we have isomorphisms, natural in
  $a\in\mathcal{A}$
  \begin{equation}
    \label{eq:7}
    \mathcal{A}(x\triangleright a,b)\cong\mathcal{V}(x,\enr{A}(a,b))
    \cong
    \mathcal{V}(i,\enr{A}(a,x\pitchfork b))\cong
    \mathcal{A}(i\triangleright a,x\pitchfork b)
    \cong
    \mathcal{A}(a,x\pitchfork b).
  \end{equation}
\end{proof}

It will be usefull to set out in the following proposition and corollary
the type of action and enrichment we will encounter more often, as to avoid
having to make deductions about it in each instance.

\begin{prop}
  \label{prop:4}
  Let $\mathcal{V}$ be a monoidal category, and
  $\pitchfork\colon\mathcal{V}^{\mathrm{op}}\times\mathcal{A}\to\mathcal{A}$
  be a action of $\mathcal{V}^{\mathrm{op,rev}}$ with a parametrised adjoint
  $\mathcal{A}(a, x\pitchfork b)\cong\mathcal{V}(x,P(a,b))$.
  \begin{enumerate}[wide]
  \item \label{item:5} There is a $\mathcal{V}$-category $\mathsf{B}$ with:
    \begin{enumerate*}[label=(\roman*)]
    \item the same objects as $\mathcal{A}$;
    \item internal homs $\mathsf{B}(a,b)=P(a,b)$;
    \item underlying category $\mathcal{A}$;
    \end{enumerate*}
its composition $P(b,c)\otimes P(a,b)\to P(a,c)$ corresponds under the parametrised ajunction to the composite morphism below.
    \begin{equation}
      a\to
      P(a,b)\pitchfork b
      \to
      P(a,b)\pitchfork (P(b,c)\pitchfork c)
      \cong
      (P(b,c)\otimes P(a,b))\pitchfork c
      \label{eq:21}
    \end{equation}
  \item \label{item:6}  Assume that $\mathcal{V}$ is right closed. 
    Then $\mathsf{B}$ is cotensored,
    and  $x\pitchfork a$ is the cotensor product of $a\in\mathsf{B}$ by
    $x\in\mathcal{V}$. 
  \item \label{item:7} Assume that $\mathcal{V}$ is left closed. Then $\mathsf{B}$ is tensored if and only if each
    functor $x\pitchfork\mi$ has a left adjoint $x\triangleright\mi$, in which
    case $x\triangleright a$ is the tensor product of $a\in\mathsf{B}$ by
    $x\in\mathcal{V}$.
  \end{enumerate}
\end{prop}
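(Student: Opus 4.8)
The plan is to exploit Lemma~\ref{l:1} to convert the given oplax/action data on $\pitchfork$ back into lax/action data on a genuine left action $\triangleright$, and then simply invoke Theorem~\ref{thm:1} and Proposition~\ref{prop:1}. Concretely, since $\pitchfork\colon\mathcal{V}^{\mathrm{op}}\times\mathcal{A}\to\mathcal{A}$ is an action of $\mathcal{V}^{\mathrm{op,rev}}$ and admits a parametrised adjoint $P$ with $\mathcal{A}(a,x\pitchfork b)\cong\mathcal{V}(x,P(a,b))$, the first move is to observe that $P$ itself is the parametrised left adjoint of $\pitchfork$ in each variable: reading the iso as $\mathcal{V}(x,P(a,b))\cong\mathcal{A}(a,x\pitchfork b)$ naturally in $b$ exhibits $(-\pitchfork b)$'s behaviour, but more usefully, fixing $a$ and reading it contravariantly gives that $\pitchfork$ has a parametrised adjoint on the other side. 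Applying Lemma~\ref{l:1} with the roles suitably matched, the action $\pitchfork$ of $\mathcal{V}^{\mathrm{op,rev}}$ corresponds bijectively to a lax action of $\mathcal{V}$ on $\mathcal{A}$, which is in fact a genuine action because $\pitchfork$ is; call it $\triangleright\colon\mathcal{V}\times\mathcal{A}\to\mathcal{A}$, characterised by $(x\triangleright-)\dashv(x\pitchfork-)$, i.e.\ $\mathcal{A}(x\triangleright a,b)\cong\mathcal{A}(a,x\pitchfork b)$.

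Next I would check that this $\triangleright$ is a \emph{closed} left action in the sense of Definition~\ref{df:1}, with enriched hom exactly $P$. This is immediate by composing the two adjunctions: $\mathcal{A}(x\triangleright a,b)\cong\mathcal{A}(a,x\pitchfork b)\cong\mathcal{V}(x,P(a,b))$, all natural in the three variables, so $P\colon\mathcal{A}^{\mathrm{op}}\times\mathcal{A}\to\mathcal{V}$ is a parametrised right adjoint to $\triangleright$. Theorem~\ref{thm:1} then furnishes a $\mathcal{V}$-category $\mathsf{B}$ with the same objects as $\mathcal{A}$, homs $\mathsf{B}(a,b)=P(a,b)$, and underlying category $\mathcal{A}$, which proves part~\eqref{item:5}; the composite \eqref{eq:19} describing the composition of $\mathsf{B}$ in terms of $\triangleright$ is transported across the adjunction $(x\triangleright-)\dashv(x\pitchfork-)$ to the composite \eqref{eq:21} stated in terms of $\pitchfork$, which is a routine diagram chase identifying the action associativity/counit for $\triangleright$ with the action associativity/unit for $\pitchfork$ under mateship (this identification is precisely what Lemma~\ref{l:1} records).

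For part~\eqref{item:6}, assuming $\mathcal{V}$ right closed: the functor $x\pitchfork-$ is, essentially by construction, the right adjoint of $x\triangleright-$, so condition~\eqref{item:1} of Proposition~\ref{prop:1} holds automatically, and that proposition yields that $\mathsf{B}$ has cotensor products; unwinding the construction of the cotensor in the proof of Proposition~\ref{prop:1} (the unit component $x\triangleright(x\pitchfork b)\to b$ transposes to $\eta\colon x\to\mathsf{B}(x\pitchfork b,b)$) shows that the cotensor of $a$ by $x$ is the given object $x\pitchfork a$. For part~\eqref{item:7}, assuming $\mathcal{V}$ left closed: by the discussion following Theorem~\ref{thm:1} (or directly, since $\mathcal{V}$ left closed gives $\mathcal{V}(x\ot y,z)\cong\mathcal{V}(x,\lhom{y}{z})$), $\mathsf{B}$ is tensored precisely when $x\triangleright-$ exists as a left adjoint to $\mathsf{B}(a,-)$ appropriately — equivalently, tracing through, when each $x\pitchfork-$ has a left adjoint, which we have named $x\triangleright-$; and then Theorem~\ref{thm:1}'s last clause identifies $x\triangleright a$ as the tensor product.

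The only genuine subtlety — the step I expect to be the main obstacle — is bookkeeping the variance and the \emph{rev} twists correctly when invoking Lemma~\ref{l:1}: the lemma is stated as producing an oplax action of $\mathcal{V}^{\mathrm{op,rev}}$ from a lax action of $\mathcal{V}$, and here we are running it in the reverse direction and starting from a \emph{pseudo}action, so one must be careful that ``$\pitchfork$ is an action of $\mathcal{V}^{\mathrm{op,rev}}$'' is exactly the hypothesis the lemma's backward direction needs, and that the resulting $\triangleright$ has its associator pointing the way Theorem~\ref{thm:1} expects. Everything else is a formal consequence of the two cited results; no hard analysis or limit arguments are involved, only the naturality/coherence diagrams already guaranteed by the earlier statements.
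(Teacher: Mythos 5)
There is a genuine gap at the very first step, and it propagates through the whole argument. Lemma~\ref{l:1} does not \emph{produce} a functor $\triangleright$; it presupposes that both functors $\triangleright$ and $\pitchfork$ are already given together with a parametrised adjunction $(x\triangleright\mi)\dashv(x\pitchfork\mi)$, and then merely transports action structures across it. The hypothesis of Proposition~\ref{prop:4} supplies a \emph{different} parametrised adjunction, namely $\mathcal{A}(a,x\pitchfork b)\cong\mathcal{V}(x,P(a,b))$, which is an adjunction between $(\mi\pitchfork b)$ and $P(\mi,b)$ in the variables $x$ and $a$ (with $b$ as parameter). It gives you no left adjoint to $x\pitchfork\mi$ in the variable $b$, and such a left adjoint need not exist: its existence is exactly the condition that part~\eqref{item:7} characterises as equivalent to $\mathsf{B}$ being tensored. (In the motivating Sweedler-theory examples, $x\triangleright\mi$ is the Sweedler product, whose existence requires local presentability hypotheses that are deliberately absent here.) So your construction of $\triangleright$ ``characterised by $(x\triangleright-)\dashv(x\pitchfork-)$'' assumes what is to be proved in~\eqref{item:7} and is unavailable for~\eqref{item:5} and~\eqref{item:6}.

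The way to repair this — and the route the paper takes — is to read the given adjunction in $\mathcal{A}^{\mathrm{op}}$: it exhibits $\pitchfork^{\mathrm{op}}\colon\mathcal{V}^{\mathrm{rev}}\times\mathcal{A}^{\mathrm{op}}\to\mathcal{A}^{\mathrm{op}}$ as a \emph{closed} action in the sense of Definition~\ref{df:1}, with enriched hom $Q(a,b)=P(b,a)$, since $\mathcal{A}^{\mathrm{op}}(x\pitchfork b,a)\cong\mathcal{V}(x,Q(b,a))$. Theorem~\ref{thm:1} then applies directly to yield a $\mathcal{V}^{\mathrm{rev}}$-category $\mathsf{A}$ with underlying category $\mathcal{A}^{\mathrm{op}}$, and one obtains $\mathsf{B}$ as $\mathsf{A}^{\mathrm{rev}}$, with homs $P(a,b)$ and underlying category $\mathcal{A}$. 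Part~\eqref{item:6} then follows because $\mathcal{V}$ right closed means $\mathcal{V}^{\mathrm{rev}}$ left closed, so $\mathsf{A}$ is \emph{tensored} by the last clause of Theorem~\ref{thm:1} with tensors $x\pitchfork a$, i.e.\ $\mathsf{B}$ is cotensored — no left adjoint to $x\pitchfork\mi$ is needed. Part~\eqref{item:7} is where Proposition~\ref{prop:1} enters, applied to $\mathsf{A}$ over $\mathcal{V}^{\mathrm{rev}}$, and correctly delivers the ``if and only if''. Your instinct that the $\mathrm{op}/\mathrm{rev}$ bookkeeping is the delicate point was right, but the resolution is to dualise the \emph{category acted on}, not to manufacture a left adjoint that the hypotheses do not provide.
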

\begin{proof}
  (\ref{item:5}) The parametrised adjoint $P$ provides an enriched hom
  $Q\colon\mathcal{A}\times \mathcal{A}^{\mathrm{op}}\to\mathcal{V}$ for the
  action $\pitchfork^{\mathrm{op}}$ of $\mathcal{V}^{\mathrm{rev}}$ on
  $\mathcal{A}^\mathrm{op}$. Here $Q(a,b)=P(b,a)$. There is, by
  Theorem~\ref{thm:1}, a $\mathcal{V}^{\mathrm{rev}}$-category $\mathsf{A}$ with the same
  objects as $\mathcal{A}$, with underlying category isomorphic to
  $\mathcal{A}^{\mathrm{op}}$, and with internal homs
  $\mathsf{A}(a,b)=Q(a,b)$. The composition of $\mathsf{A}$ is described after
  Theorem~\ref{thm:1}. It is the morphism
  \begin{equation}
    P(b,a)\otimes P(c,b)=
    Q(b,c)\otimes^{\mathrm{rev}} Q(a,b) \to Q(a,c)
    =P(c,a)
    \label{eq:12}
  \end{equation}
  that
  correponds to the composition in $\mathcal{A}$ of adjoint units and the
  associativity of the action, as shown.
  \begin{equation}
    \label{eq:22}
    c\to Q(b,c)\pitchfork b
    \to Q(b,c)\pitchfork(Q(a,b)\pitchfork a)
    \cong (Q(a,b)\otimes Q(b,c))\pitchfork a
  \end{equation}
  Let $\mathsf{B}$ be the $\mathcal{V}$-category with the same objects as
  $\mathsf{A}$ and homs $\mathsf{B}(a,b)=\mathsf{A}(b,a)=P(a,b)$, with
  composition given by~\eqref{eq:12} and the same identities as $\mathsf{A}$.
  This composition clearly has transpose~\eqref{eq:21}, as required.
  Finally, the underlying category of $\mathsf{B}$ is $\mathcal{A}$, since that
  of $\mathsf{A}$ is $\mathcal{A}^{\mathrm{op}}$.

  (\ref{item:6}) Let's assume that $\mathcal{V}$ is right closed, which is to
  say that $\mathcal{V}^{\mathrm{rev}}$ is left closed. We know that the
  $\mathcal{V}^{\mathrm{rev}}$-category $\mathsf{A}$ is tensored by
  Theorem~\ref{thm:1}, with tensors $x\pitchfork a$. This is equivalent to
  saying that $\mathsf{B}$ is cotensored with cotensors $x\pitchfork a$.

  (\ref{item:7}) When $\mathcal{V}$ is left closed, $\mathcal{V}^{\mathrm{rev}}$
  is right closed, so $\mathsf{A}$ is cotensored if and only if each
  $(x\pitchfork\mi)^{\mathrm{op}}$ has a right adjoint, in which case the
  cotensor is provided by this adjoint, by
  Proposition~\ref{prop:1}. Translating this in terms of $\mathsf{B}$, we obtain
  the statement.
\end{proof}

\begin{cor}
  \label{cor:5}
  Consider a biclosed monoidal closed category $\mathcal{V}$ and a
  triplete of functors
  $\triangleright\colon\mathcal{V}\times\mathcal{A}\to\mathcal{A}$,
  $P\colon\mathcal{A}^{\mathrm{op}}\times\mathcal{A}\to\mathcal{V}$ and
  $\pitchfork\colon\mathcal{V}^{\mathrm{op}}\times\mathcal{A}\to\mathcal{A}$,
  together with parametrised adjunctions
  \begin{equation}
    \label{eq:9}
    \mathcal{A}(x\triangleright a,b)\cong\mathcal{V}(x,P(a,b))\cong
    \mathcal{A}(a,x\pitchfork b).
  \end{equation}
  Assume that $\triangleright$ is an action of $\mathcal{V}$ on $\mathcal{A}$,
  or, equivalently, that $\pitchfork$ is an action of
  $\mathcal{V}^{\mathrm{op,rev}}$ (\cref{l:1}). Then, the
  $\mathcal{V}$-category arising from $\triangleright$ (Theorem~\ref{thm:1}) is
  the same as the $\mathcal{V}$-category arising from $\pitchfork$
  (Proposition~\ref{prop:4}).  This $\mathcal{V}$-category has tensor products
  provided by $x\triangleright a$ and cotensor products provided by
  $x\pitchfork a$.
\end{cor}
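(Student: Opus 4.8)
The plan is to deduce \cref{cor:5} by showing that the two \(\mathcal{V}\)-categories in question — call them \(\enr{A}_{\triangleright}\) from \cref{thm:1} and \(\mathsf{B}\) from \cref{prop:4} — literally coincide as \(\mathcal{V}\)-categories, not merely up to isomorphism. They already agree on objects (both have the objects of \(\mathcal{A}\)) and on hom-objects (both have \(P(a,b)\)), so the only thing to check is that their composition morphisms agree; the identities will be pinned down by the same bijection on underlying categories. First I would observe that \cref{l:1} applies: the biclosedness of \(\mathcal{V}\) together with the adjunctions \eqref{eq:9} gives a parametrised adjunction \((x\triangleright\mi)\dashv(x\pitchfork\mi)\), so the hypothesis that \(\triangleright\) is an action of \(\mathcal{V}\) is equivalent to \(\pitchfork\) being an action of \(\mathcal{V}^{\mathrm{op,rev}}\), and \cref{prop:4} is indeed applicable.

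The core of the argument is a diagram chase comparing the two descriptions of composition. The composition of \(\enr{A}_{\triangleright}\) is the transpose, under \(\mathcal{A}(x\triangleright a,b)\cong\mathcal{V}(x,P(a,b))\), of the composite \eqref{eq:19}, built from the associativity \(\alpha\) of \(\triangleright\) and the counits of \((x\triangleright\mi)\dashv(x\pitchfork\mi)\) (here the counit \(x\triangleright(x\pitchfork b)\to b\), recast via the enriched-hom adjunction). The composition of \(\mathsf{B}\) is the transpose, under \(\mathcal{A}(a,x\pitchfork b)\cong\mathcal{V}(x,P(a,b))\), of the composite \eqref{eq:21}, built from the units \(a\to P(a,b)\pitchfork b\) and the associativity \(\bar\alpha\) of \(\pitchfork\). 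The key step is that \(\alpha\) and \(\bar\alpha\) are mates of one another under the parametrised adjunction \((x\triangleright\mi)\dashv(x\pitchfork\mi)\) — this is exactly the bijection set up in the proof of \cref{l:1}. Transposing \eqref{eq:19} across the tensor–cotensor–hom adjunction \eqref{eq:tens-cotens-hom} and using that the counit of \(\triangleright\) corresponds to the unit of \(\pitchfork\), together with the compatibility of the associativity mates, turns \eqref{eq:19} into precisely \eqref{eq:21}. Hence the two composition morphisms \(P(b,c)\otimes P(a,b)\to P(a,c)\) coincide, and the \(\mathcal{V}\)-categories are equal.

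The last sentence of the corollary is then immediate: by the ``moreover'' clause of \cref{thm:1}, left-closedness of \(\mathcal{V}\) makes \(x\triangleright a\) the tensor product of \(a\) by \(x\) in \(\enr{A}_{\triangleright}\); by \cref{prop:4}\eqref{item:6}, right-closedness of \(\mathcal{V}\) makes \(x\pitchfork a\) the cotensor product of \(a\) by \(x\) in \(\mathsf{B}\); and since the two \(\mathcal{V}\)-categories are the same, this single \(\mathcal{V}\)-category is simultaneously tensored by \(\triangleright\) and cotensored by \(\pitchfork\). (One can cross-check consistency via \cref{prop:4}\eqref{item:7}: the left adjoint of \(x\pitchfork\mi\) is \(x\triangleright\mi\) by hypothesis, so the tensor it produces agrees with the one from \cref{thm:1}.)

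The step I expect to be the main obstacle is the mate computation: verifying that transposing the associativity-plus-counit composite \eqref{eq:19} across \eqref{eq:tens-cotens-hom} yields the unit-plus-associativity composite \eqref{eq:21} on the nose. This requires carefully tracking several transpositions — the enriched-hom adjunction defining the counit \(x\triangleright(x\pitchfork b)\to b\), the tensor–cotensor–hom adjunction, and the mate correspondence \(\alpha\leftrightarrow\bar\alpha\) — and invoking the general naturality/mate lemma (the one cited in the proof of \cref{l:1} and \cref{prop:doctrinalparameter}) to commute them past one another. Everything else is bookkeeping with opposite and reversed categories, of the kind already carried out in the proof of \cref{prop:4}.
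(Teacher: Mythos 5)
Your proposal is correct and follows essentially the same route as the paper: both arguments note that the two $\mathcal{V}$-categories share objects and homs $P(a,b)$, and reduce the coincidence of compositions to checking that \eqref{eq:19} and \eqref{eq:21} are transposes of one another under the adjunction $(P(b,c)\otimes P(a,b)\triangleright\mi)\dashv(P(b,c)\otimes P(a,b)\pitchfork\mi)$, with the tensor/cotensor claims then read off from Theorem~\ref{thm:1} and Proposition~\ref{prop:4}. The paper simply asserts that this transpose computation ``is readily shown,'' whereas you correctly identify its ingredients (the mate correspondence $\alpha\leftrightarrow\bar\alpha$ and the fact that the counit $P(a,b)\triangleright a\to b$ and unit $a\to P(a,b)\pitchfork b$ both correspond to $1_{P(a,b)}$ in the middle homset of \eqref{eq:9}).
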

\begin{proof}
  We have two $\mathcal{V}$-categories, $\mathsf{A}$ from Theorem~\ref{thm:1}
  and $\mathsf{B}$ from Proposition~\ref{prop:4}. These have the same objects as well as homs,
  provided by $P(a,b)$. One has to verify that the composition and identities
  coincide.

  Given one element $f$ of the homset on the left, and another element $g$ of the homset
  on the right, they have the same image in the middle homset precisely if $g$
  is the transpose of $f$ under the adjunction $(P(b,c)\otimes
  P(a,b)\triangleright\mi)\dashv (P(b,c)\otimes
  P(a,b)\pitchfork\mi)$.
  \begin{equation}
    \label{eq:4}
    \mathcal{A}\bigl((P(b,c)\otimes P(a,b))\triangleright a,P(a,c)\bigr)
    \cong
    \mathcal{V}\bigl(P(b,c)\otimes P(a,b),P(a,c)\bigr)
    \cong
    \mathcal{A}\bigl(a,(P(b,c)\otimes P(a,b))\pitchfork c\bigr)
  \end{equation}
  We know that the morphism \eqref{eq:19}, an element of the homset on the left,
  corresponds to the composition of $\mathsf{A}$ in the middle homset.
  Similarly, the morphism~\eqref{eq:21}, an element of the homset on the right,
  corresponds to the composition of $\mathsf{B}$ in the middle homset. Moreover,
  \eqref{eq:21} is the transpose of \eqref{eq:19}, as it is readily shown. This
  completes the proof that the two compositions coincide. We leave to the reader
  the proof that the identities coincide.
\end{proof}

\subsection{Sweedler theory for monoidal categories}\label{sec:STmoncats}

We now recall the basic results regarding the so-called \emph{Sweedler Theory} for a braided monoidal closed category. Details of what follows can 
be found in various sources \cite{mine,AnelJoyal,PhDChristina,Measuringcomonoid} and generalized for double categories in \cite{VCocats}. The name 
comes from Sweedler's \cite{Sweedler} where \emph{measuring coalgebras}, which ultimately induce a $\Coalg$-enrichment of algebras, were introduced. 

In any monoidal category $(\ca{V},\ot,I)$ we can form categories of monoids $\Mon(\ca{V})$ and comonoids $\Comon(\ca{V})$ of objects with a (co)associative and (co)unital (co)mu\-lti\-pli\-ca\-tion. If $\ca{V}$ is braided, both categories inherit the monoidal structure as in
\begin{equation}\label{eq:monmon}
 a\ot b\ot a\ot b\cong a\ot a\ot b\ot b\xrightarrow{\mu\ot\mu}a\ot b
\end{equation}
Moreover, if $\ca{V}$ is braided (left) monoidal closed, its internal hom has a natural lax monoidal structure $[a,b]\ot[a',b']\to[a\ot a',b\ot b']$ 
which corresponds to
\begin{displaymath}
 [a,b]\ot[a',b']\ot a\ot a'\cong [a,b]\ot a\ot [a',b']\ot a'\to b\ot b'
\end{displaymath}
using the braiding and evaluation maps. As a result, $[\mi,\mi]\colon \ca{V}^\op\times\ca{V}\to\ca{V}$ lifts between the categories of monoids
\begin{equation}\label{eq:actioncomon}
 [\mi,\mi]\colon\Comon(\ca{V})^\op\times\Mon(\ca{V})\to\Mon(\ca{V})
\end{equation}
which establishes the monoid structure of any $[c,b]$ for $c$ a comonoid and $b$ a monoid via convolution.

Moreover, it can be verified that $[\mi,\mi]$ is an \emph{action} via the standard $[a\ot 
b,c]\cong[a,[b,c]]$ 
and $[I,a]\cong a$, see \cite[Lem.~4.3.2]{PhDChristina}.

\begin{prop}\label{prop:homaction}
If $\ca{V}$ is (left) monoidal closed, $[\mi,\mi]$ is an action of $\ca{V}^\op$ on $\ca{V}$; when $\ca{V}$ is moreover braided, its induced functor 
between (co)monoids is an 
action of $\Comon(\ca{V})^\op$ on $\Mon(\ca{V})$.
\end{prop}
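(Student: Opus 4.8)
The plan is to establish the two claims separately. First, for the assertion that $[\mi,\mi]$ is an action of $\ca{V}^\op$ on $\ca{V}$ when $\ca{V}$ is left monoidal closed, I would simply invoke the standard parametrised adjunction $\ca{V}(a\ot b,c)\cong\ca{V}(a,[b,c])$ and the discussion preceding the proposition: the required associativity isomorphism $[a\ot b,c]\cong[a,[b,c]]$ and unit isomorphism $[I,c]\cong c$ are obtained by taking mates (under the tensor-hom adjunction in the $c$ variable) of the associator and left unitor of $\ca{V}$. That these satisfy the pentagon and triangle axioms for an action follows from the corresponding coherence axioms of the monoidal category $\ca{V}$, because mateship is functorial and respects composition and identities; this is exactly the content of \cite[Lem.~4.3.2]{PhDChristina}, so I would cite it rather than reproduce the diagram chase. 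Invertibility of the transformations is automatic since they are mates of invertible transformations (the associator and unitor), using the general fact about mates recalled in the proof of \cref{l:1}.

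Second, and this is the substantive part, I would show that when $\ca{V}$ is moreover braided, the lift $[\mi,\mi]\colon\Comon(\ca{V})^\op\times\Mon(\ca{V})\to\Mon(\ca{V})$ of \eqref{eq:actioncomon} is again an action. The key observation is that the monoidal-category-level associativity and unit isomorphisms $[a\ot b,c]\cong[a,[b,c]]$ and $[I,c]\cong c$ are \emph{monoid morphisms} when $a,b$ are comonoids and $c$ is a monoid; granting this, the action axioms for the lifted functor are inherited from the already-verified action axioms in $\ca{V}$, since the forgetful functors $\Comon(\ca{V})\to\ca{V}$ and $\Mon(\ca{V})\to\ca{V}$ are faithful and (strong) monoidal, so equations between monoid morphisms can be checked on underlying objects. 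Thus the whole verification reduces to checking that these two canonical isomorphisms respect the convolution monoid structure \eqref{eq:monmon} defined via the braiding. I would do this by a diagram chase: expand both sides of $[a\ot b,c]\cong[a,[b,c]]$ using the definition of the convolution product (evaluation maps pre-composed with braidings) and use naturality of the braiding together with the hexagon axioms to match them; the unit isomorphism $[I,c]\cong c$ is easier and uses only the unitor coherence.

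An alternative, cleaner route — which I would actually prefer to present — is to package everything through \cref{prop:doctrinalparameter}. The braided lax monoidal structure on $[\mi,\mi]$ described before the proposition makes $[\mi,\mi]\colon\ca{V}^{\op}\times\ca{V}\to\ca{V}$ a lax monoidal functor in a suitable sense, hence it lifts to (co)monoids; the associativity and unit constraints of the action, being built from the associator and unitor of $\ca{V}$, are \emph{monoidal} natural transformations, and \cref{prop:doctrinalparameter} (or rather its underlying principle that mates of monoidal natural transformations are monoidal) guarantees that the lifted constraints are again natural transformations between the lifted functors. The action axioms then transfer from $\ca{V}$ to $\Mon(\ca{V})$ via faithfulness of the forgetful functor. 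I expect the main obstacle to be purely bookkeeping: carefully checking that the associativity isomorphism of the $\ca{V}^\op$-action is compatible with \emph{three} instances of the convolution structure — on $[a\ot b,c]$, on $[a,[b,c]]$, and with the interchange used to define convolution on an iterated internal hom — which requires one application of the braiding hexagon. None of this is deep, but it is the only place where a genuine computation (rather than an appeal to functoriality of mates) is needed, so I would isolate it as the one lemma to verify by hand and leave the remaining axiom-checking to the reader or to the cited reference.
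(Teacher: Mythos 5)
Your proposal is correct and, in its preferred form, is essentially the paper's own argument: the paper leaves \cref{prop:homaction} to the cited reference and to the construction of the lax monoidal structure on $[\mi,\mi]$ described just before it, and then carries out exactly your \cref{prop:doctrinalparameter}-based route in full for the duoidal generalisation (\cref{prop:duoidalaction}), of which the braided case is the instance $\diamond=\star=\ot$. Your identification of the one genuine computation --- that the associator and unitor are monoidal natural transformations between the relevant (op)lax monoidal functors, which in the braided case comes down to the hexagon axioms --- matches where the paper places the burden as well.
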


As a result, for a symmetric monoidal closed category where $\mi\ot a\dashv [a,\mi]$ but also $[\mi,b]^\op\dashv[\mi,b]$, 
\cref{cor:5} applies and provides the well-known result that $\ca{V}$ is tensored (via $\otimes$) and cotensored (via 
$[\mi,\mi]$) enriched in itself, with $\Hom_\ca{V}=[\mi,\mi]$.

In the case of (co)monoids, the existence of a parametrised adjoint to the action $[\mi,\mi]$ would also induce a cotensored and possibly tensored enrichment according to \cref{sec:actions-enrichment}, and that is precisely what the existence of the `Sweedler hom' functor $P\colon\Mon(\ca{V})^\op\times\Mon(\ca{V})\to\Comon(\ca{V})$ and the `Sweedler product' functor $\mi\triangleright\mi\colon\Comon(\ca{V})\times\Mon(\ca{V})\to\Mon(\ca{V})$ establishes below.

For the existence of adjoints for \cref{eq:actioncomon}, we restrict to the class of locally presentable categories. 
By working in this sufficient context, we can use the fact that locally presentable categories are cocomplete and contain a small dense 
subcategory (of presentable objects) so that every cocontinuous functor with such a domain has a right adjoint -- as a variation of the Special 
Adjoint Functor Theorem, see \cite[Thm.~5.33]{Kelly}. 

Although the original references for the following result by Porst assume symmetric monoidal closed structure on $\ca{V}$ for simplicity, our 
subsequent examples require that the results should be properly stated in a
weaker setting, see also \cite[45]{PhDChristina}, \cite[Prop.~2.9]{Measuringcomonoid}.

\begin{prop}\label{prop:MonComonproperties}
Suppose that $\ca{V}$ is a locally presentable category, equipped with a
monoidal structure whose tensor product is accessible.
Then the categories of monoids $\Mon(\ca{V})$ and comonoids $\Comon(\ca{V})$
are locally presentable, the former is monadic and latter comonadic over
$\ca{V}$.
Furthermore, if $\mathcal{V}$ is braided, then $\Comon(\ca{V})$ is monoidal
biclosed.
\end{prop}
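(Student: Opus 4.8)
The plan is to follow Porst's strategy, treating $\Mon(\ca{V})$ and $\Comon(\ca{V})$ in parallel, with the comonoid side carrying the real content. For the monoid side: accessibility of $\otimes$ together with cocompleteness of $\ca{V}$ guarantees that the free monoid monad $T$ on $\ca{V}$ exists — given by $TX=\coprod_{n\ge 0}X^{\otimes n}$ when $\otimes$ is cocontinuous in each variable, and otherwise by Kelly's transfinite construction of free monoids, which converges because $\otimes$ is accessible and $\ca{V}$, being locally presentable, carries a convenient factorisation system — and, in either case, that $T$ is an accessible monad. Then $\Mon(\ca{V})\simeq\ca{V}^{T}$ is by construction monadic over $\ca{V}$, and it is locally presentable since the category of algebras for an accessible monad on a locally presentable category is again locally presentable.

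For $\Comon(\ca{V})$ I would first record, by a direct diagram chase, that the forgetful functor $U\colon\Comon(\ca{V})\to\ca{V}$ creates colimits: a colimit $L$ of the underlying objects of a diagram of comonoids acquires a unique comultiplication $L\to L\otimes L$ and counit $L\to I$ rendering the colimit cocone a cocone of comonoid maps — the comparison into $L\otimes L$ exists because the legs $C_i\to C_i\otimes C_i\to L\otimes L$ form a cocone — and coassociativity and counitality transfer by uniqueness in the universal property. Hence $\Comon(\ca{V})$ is cocomplete and $U$ preserves all colimits. The substantive point is then to exhibit a small strong generator, for which one proves the \emph{finiteness theorem for comonoids}: for a suitable regular cardinal $\kappa$, every comonoid is the $\kappa$-filtered union of its subcomonoids whose underlying object is $\kappa$-presentable. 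This is where accessibility of $\otimes$ is used essentially: given a $\kappa$-presentable subobject $x\rightarrowtail C$, the restriction of $\delta$ to $x$ factors through $C'\otimes C'$ for some $\kappa$-presentable subobject $C'\supseteq x$, because $C\otimes C$ is the $\kappa$-filtered colimit of such squares; iterating this closure under $\delta$ transfinitely, and enlarging $\kappa$ so as to absorb the length of the iteration, yields a subcomonoid containing $x$ that is still $\kappa$-presentable. Granting the finiteness theorem, the $\kappa$-presentable comonoids form a small dense subcategory of the cocomplete category $\Comon(\ca{V})$, so the latter is locally presentable.

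Comonadicity of $U$ then follows formally. The functor $U$ is conservative, since a comonoid morphism invertible on underlying objects has an inverse that is automatically a comonoid morphism; and, being cocontinuous between locally presentable categories, $U$ has a right adjoint — the cofree comonoid functor — by the special adjoint functor theorem. Moreover $\Comon(\ca{V})$ has, and $U$ preserves, equalisers of $U$-split pairs: such a split equaliser in $\ca{V}$ is absolute, hence preserved by $\otimes$ in each variable, so it lifts canonically to a comonoid and is computed there as in $\ca{V}$. Beck's comonadicity theorem now gives that $U$ is comonadic.

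For the last assertion, suppose $\ca{V}$ is braided. The braiding lifts the monoidal structure of $\ca{V}$ to $\Comon(\ca{V})$: the unit is the canonical comonoid on $I$, and the tensor of $C$ and $D$ is the object $C\otimes D$ with comultiplication built from $\delta_C$, $\delta_D$ and the braiding as in \eqref{eq:monmon}. Since colimits in $\Comon(\ca{V})$ are created by $U$, cocontinuity of this lifted tensor product in each variable reduces to the corresponding property of $\otimes$ on $\ca{V}$; using that and the local presentability of $\Comon(\ca{V})$, the adjoint functor theorem produces right adjoints to all the functors $C\otimes-$ and $-\otimes C$, and these are the left and right internal homs exhibiting $\Comon(\ca{V})$ as monoidal biclosed (the two closed structures being interchanged by the braiding). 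I expect the finiteness theorem for comonoids — the control on the size of the subcomonoid generated by a presentable subobject — to be the only genuinely non-formal ingredient; everything else is an application of standard locally-presentable-category machinery, Kelly's transfinite constructions, and Beck's theorem.
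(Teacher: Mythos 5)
The paper does not actually prove this proposition: it is quoted from Porst (the sources cited as \cite{MonComonBimon} and around \cite[Prop.~2.9]{Measuringcomonoid}), so your proposal can only be compared with that literature. Your treatment of $\Mon(\ca{V})$ (free monoid monad via Kelly's transfinite construction, accessibility of the monad, local presentability of the Eilenberg--Moore category) and your comonadicity argument (creation of colimits by $U$, conservativity, the special adjoint functor theorem for the cofree comonoid, and Beck via absoluteness of $U$-split equalizers) are correct and are essentially the standard arguments. Where you genuinely diverge is the local presentability of $\Comon(\ca{V})$: Porst obtains accessibility of $\Comon(\ca{V})$ formally, by presenting it as an iterated inserter/equifier of accessible functors between accessible categories and invoking the Makkai--Par\'e limit theorem \cite{MakkaiPare}, whereas you argue via a ``finiteness theorem'' asserting that every comonoid is the $\kappa$-filtered union of subcomonoids with $\kappa$-presentable underlying object. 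That route is fine for vector spaces, but in an arbitrary locally presentable $\ca{V}$ with merely accessible tensor it is substantially harder than your sketch suggests: $\otimes$ need not preserve monomorphisms, so the factorisation of $\delta|_x$ through some $C'\otimes C'$ does not obviously produce a \emph{subobject} of $C$, and the transfinite closure under $\delta$ is delicate. Either you should drop subobjects and work with the canonical diagram of $\kappa$-presentable objects mapping to $C$ (showing the comonoids with $\kappa$-presentable underlying object are dense), or simply use the abstract accessibility argument; as written, this step is a gap.

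A second, smaller gap is in the biclosedness of $\Comon(\ca{V})$. Your argument reduces cocontinuity of $C\otimes\mi$ on $\Comon(\ca{V})$ to cocontinuity of $\otimes$ in each variable on $\ca{V}$, but the stated hypotheses only give \emph{accessibility} of $\otimes$; a right adjoint to $C\otimes\mi$ forces preservation of all colimits, which does not follow from accessibility. The sources the paper cites assume $\ca{V}$ is braided monoidal \emph{closed} at this point, and every application in the paper (e.g.\ Theorem~\ref{thm:measuduoidal}) has the relevant tensor biclosed on $\ca{V}$, so the extra hypothesis is harmless in context --- but you should state it explicitly rather than let it enter silently.
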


\begin{thm}\label{thm:measucoalg}\cite[Thm.~4.1]{Measuringcomonoid},\cite[105]{PhDChristina}
 For a braided monoidal closed and locally presentable category $\ca{V}$, each functor $[\mi,b]^\op\colon\Comon(\ca{V})\to\Mon(\ca{V})^\op$ has a 
right adjoint $P(\mi,b)$ and also each functor $[c,\mi]\colon\Mon(\ca{V})\to\Mon(\ca{V})$ has a left adjoint $c\triangleright\mi$, via natural 
bijections
\begin{displaymath}
 \Mon(\ca{V})(c\triangleright a,b)\cong\Comon(\ca{V})(c,P(a,b))\cong\Mon(\ca{V})(a,[c,b])
\end{displaymath} 
\end{thm}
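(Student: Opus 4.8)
The plan is to obtain the two adjunctions as instances of the Special Adjoint Functor Theorem (in the enriched/cocomplete form cited as \cite[Thm.~5.33]{Kelly}), having first checked the hypotheses of \cref{prop:MonComonproperties}. So the first step is to record that $\ca{V}$, being braided monoidal closed, has accessible tensor product — indeed $a\ot\mi$ is a left adjoint, hence preserves all colimits, so it is certainly accessible — and therefore, by \cref{prop:MonComonproperties}, $\Mon(\ca{V})$ is locally presentable and monadic over $\ca{V}$, while $\Comon(\ca{V})$ is locally presentable and comonadic over $\ca{V}$, and moreover $\Comon(\ca{V})$ is monoidal biclosed.

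For the left adjoint $c\triangleright\mi$ to $[c,\mi]\colon\Mon(\ca{V})\to\Mon(\ca{V})$: I would argue that $[c,\mi]$ is accessible and limit-preserving, so that the dual (co)-version of the adjoint functor theorem applies. Concretely, $[c,\mi]\colon\ca{V}\to\ca{V}$ is a right adjoint (to $c\ot\mi$, using the braiding to identify left and right hom), hence continuous and accessible on $\ca{V}$; since the forgetful functor $U\colon\Mon(\ca{V})\to\ca{V}$ is monadic it creates limits and, $\ot$ being accessible, creates $\kappa$-filtered colimits for suitable $\kappa$, so the lifted functor $[c,\mi]$ on $\Mon(\ca{V})$ is again continuous and accessible; as $\Mon(\ca{V})$ is locally presentable, an accessible continuous endofunctor has a left adjoint. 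For the right adjoint $P(\mi,b)$ to $[\mi,b]^\op\colon\Comon(\ca{V})\to\Mon(\ca{V})^\op$: I would show the functor $[\mi,b]^\op$ is cocontinuous out of $\Comon(\ca{V})$. Since $\Comon(\ca{V})$ is comonadic over $\ca{V}$ its colimits are created by the cofree-comonoid right adjoint's left adjoint — more simply, colimits in $\Comon(\ca{V})$ are computed as in $\ca{V}$ on underlying objects when the comonoidal structure lifts them, which it does because $\ot$ preserves the relevant colimits; and $[\mi,b]\colon\ca{V}^\op\to\ca{V}$ sends colimits in $\ca{V}$ to limits, i.e. $[\mi,b]^\op$ sends them to colimits; combined with the lift to monoids (which creates limits, hence $\Mon(\ca{V})^\op$ has them computed on underlying objects) one gets that $[\mi,b]^\op\colon\Comon(\ca{V})\to\Mon(\ca{V})^\op$ is cocontinuous. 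A cocontinuous functor from a locally presentable category has a right adjoint by \cite[Thm.~5.33]{Kelly}, yielding $P(\mi,b)$.

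Finally, the middle bijection $\Comon(\ca{V})(c,P(a,b))\cong\Mon(\ca{V})(a,[c,b])$ is just the definition of the right adjoint $P(\mi,b)$ to $[\mi,b]^\op$ unwound: $\Mon(\ca{V})^\op(\,[c,b]^\op\text{-image},a^\op\text{-image}\,)$ rewritten as a homset in $\Mon(\ca{V})$. I would then note naturality in all three variables follows from the parametrised nature of the constructions (each is built fibrewise in the remaining variables and the constructions are functorial), so the displayed chain of natural bijections holds. I expect the main obstacle to be the bookkeeping around preservation of colimits in $\Comon(\ca{V})$ and limits in $\Mon(\ca{V})$ without a symmetry: one must be careful that the monoidal-closed-but-only-braided hypothesis still gives $[\mi,b]$ as a genuine right adjoint on $\ca{V}$ (using the braiding to swap the two internal homs) and that the lifted functors on (co)monoid categories inherit (co)continuity and accessibility — this is exactly the point where \cref{prop:MonComonproperties} in its non-symmetric form, and the monadicity/comonadicity it provides, does the heavy lifting.
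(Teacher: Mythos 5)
Your proposal is correct. The paper states this theorem as a citation and only proves its duoidal generalisation, \cref{thm:measuduoidal}, so that is the natural point of comparison. For the right adjoint $P(\mi,b)$ your argument is essentially the paper's: both establish cocontinuity of $([\mi,b])^\op$ out of the locally presentable category $\Comon(\ca{V})$ — using that the comonadic forgetful functor creates colimits, the monadic one creates limits, and $[\mi,b]$ turns colimits into limits because the braiding makes $\ca{V}$ biclosed so that $[\mi,b]^\op\dashv[\mi,b]$ on $\ca{V}$ — and then invoke the Special Adjoint Functor Theorem for locally presentable categories. For the left adjoint $c\triangleright\mi$ you diverge: the paper runs Dubuc's Adjoint Triangle Theorem on the square of monadic forgetful functors, needing only that $c\ot\mi\dashv[c,\mi]$ on $\ca{V}$ and that $\Mon(\ca{V})$ has coequalizers, whereas you lift continuity and accessibility of $[c,\mi]$ along the monadic forgetful functor and apply the adjoint functor theorem for accessible continuous functors between locally presentable categories. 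Both routes are sound; the adjoint-triangle argument is lighter on hypotheses (no accessibility of the lifted endofunctor needs to be checked), while your route is more uniform with the first half and makes explicit where local presentability enters. The only step you elide is why $[c,\mi]$ is accessible on $\ca{V}$ — being a right adjoint does not give this for free in general, but it does between locally presentable categories, which is the situation here — and correspondingly why the lift to $\Mon(\ca{V})$ remains accessible, which uses that the forgetful functor creates the relevant filtered colimits because $\ot$ (hence the monad) preserves them; both points are standard and your appeal to \cref{prop:MonComonproperties} covers them.
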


Now \cref{cor:5} applies, in a slightly more straightforward way than the 
original proofs in the literature.

\begin{thm}\label{thm:STmoncats}\cite[Thm.~6.1.4]{PhDChristina},\cite[Thm.~5.2]{Measuringcomonoid}
 The category of monoids $\Mon(\ca{V})$ is tensored and cotensored enriched in
 the category of comonoids $\Comon(\ca{V})$, for any locally presentable, braided monoidal closed $\ca{V}$.
\end{thm}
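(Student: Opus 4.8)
The plan is to apply \cref{cor:5} with the biclosed monoidal base taken to be $\Comon(\ca{V})$ and the acted-on category taken to be $\Mon(\ca{V})$. Almost all of the required data has already been assembled in the preceding results, so the proof reduces to checking that the hypotheses of \cref{cor:5} are met and fit together; the only genuine work is the bookkeeping of $(\cdot)^{\mathrm{op}}$ versus $(\cdot)^{\mathrm{op,rev}}$.

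First I would discharge the standing assumptions of the auxiliary statements. Since $\ca{V}$ is monoidal closed, its tensor product is a left adjoint in each variable, hence cocontinuous and in particular accessible; together with local presentability this lets us invoke \cref{prop:MonComonproperties}, which yields that $\Mon(\ca{V})$ and $\Comon(\ca{V})$ are locally presentable and, since $\ca{V}$ is braided, that $\Comon(\ca{V})$ is monoidal biclosed, its braiding being inherited from $\ca{V}$ via \eqref{eq:monmon}. This produces the biclosed monoidal base of enrichment that \cref{cor:5} requires. Next I would assemble the triple of functors: \cref{prop:homaction} provides the action $[\mi,\mi]\colon\Comon(\ca{V})^\op\times\Mon(\ca{V})\to\Mon(\ca{V})$ of \eqref{eq:actioncomon}, the convolution internal hom, which plays the role of $\pitchfork$; and \cref{thm:measucoalg} then furnishes its parametrised left adjoint $\mi\triangleright\mi$ (the Sweedler product) and the parametrised right adjoint $P$ (the Sweedler hom), assembled into the chain of natural bijections $\Mon(\ca{V})(c\triangleright a,b)\cong\Comon(\ca{V})(c,P(a,b))\cong\Mon(\ca{V})(a,[c,b])$, which are exactly the parametrised adjunctions \eqref{eq:9} with $\mathcal{V}=\Comon(\ca{V})$ and $\mathcal{A}=\Mon(\ca{V})$.

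The remaining, and most delicate, hypothesis of \cref{cor:5} is that $\mi\triangleright\mi$ be an action of $\Comon(\ca{V})$ on $\Mon(\ca{V})$ — equivalently, by \cref{l:1}, that $[\mi,\mi]$ be an action of $\Comon(\ca{V})^{\mathrm{op,rev}}$. What \cref{prop:homaction} yields directly is that $[\mi,\mi]$ is an action of $\Comon(\ca{V})^\op$, so one must reconcile the discrepancy between $\Comon(\ca{V})^\op$ and $\Comon(\ca{V})^{\mathrm{op,rev}}$. This is where the braiding enters: since $\Comon(\ca{V})$ is braided, the identity-on-objects functor $\Comon(\ca{V})\to\Comon(\ca{V})^{\mathrm{rev}}$ carries a strong monoidal structure, so $\Comon(\ca{V})^\op$ and $\Comon(\ca{V})^{\mathrm{op,rev}}$ coincide as monoidal categories (cf.\ \cref{rmk:2}); transporting the action of \cref{prop:homaction} along this isomorphism exhibits $[\mi,\mi]$ as an action of $\Comon(\ca{V})^{\mathrm{op,rev}}$, and hence $\mi\triangleright\mi$ as an action of $\Comon(\ca{V})$.

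With every hypothesis verified, \cref{cor:5} produces a single $\Comon(\ca{V})$-category with the monoids as objects, homs $P(a,b)$, underlying category $\Mon(\ca{V})$, tensor products $c\triangleright a$ and cotensor products $[c,b]$ — which is precisely the assertion that $\Mon(\ca{V})$ is tensored and cotensored enriched in $\Comon(\ca{V})$. I expect the sole obstacle to be the careful matching of the action-associativity and unit coherence data across \cref{prop:homaction}, \cref{l:1} and \cref{rmk:2}, together with confirming that the left adjoint $c\triangleright\mi$ of \cref{thm:measucoalg} is indeed the parametrised left adjoint to $[c,\mi]$ used in \cref{l:1}; but all of this is already packaged in those results, so the argument is essentially an assembly.
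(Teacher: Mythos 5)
Your proposal is correct and follows essentially the same route as the paper, which likewise deduces the theorem by feeding the parametrised adjunctions of \cref{thm:measucoalg}, the action of \cref{prop:homaction}, and the biclosedness of $\Comon(\ca{V})$ from \cref{prop:MonComonproperties} into \cref{cor:5}. Your explicit reconciliation of $\Comon(\ca{V})^{\mathrm{op}}$ with $\Comon(\ca{V})^{\mathrm{op,rev}}$ via the braiding is a detail the paper leaves implicit, and it is handled correctly.
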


In fact, in \cite{Measuringcomonoid} it is shown that this enrichment is \emph{monoidal}.
The three functors of two variables that express the tensor, the enriched hom and the cotensor -- compare to \cref{eq:tens-cotens-hom} -- are
\begin{gather}
 [\mi,\mi]\colon\Comon(\ca{V})^\op\times\Mon(\ca{V})\longrightarrow\Mon(\ca{V})\nonumber \\
 P(\mi,\mi)\colon\Mon(\ca{V})^\op\times\Mon(\ca{V})\longrightarrow\Comon(\ca{V})\nonumber \\
 \mi\triangleright\mi\colon\Comon(\ca{V})\times\Mon(\ca{V})\longrightarrow\Mon(\ca{V})\label{eq:monoidalsweedler}
\end{gather}
Along with the inherited tensor product functor on monoids and comonoids, as well as the internal hom of comonoids 
$\HOM\colon\Comon(\ca{V})^\op\times\Comon(\ca{V})\to\Comon(\ca{V})$ of comonoids, these six functors were collectively called \emph{Sweedler Theory} 
in the context of unpointed dg-algebras and dg-coalgebras in \cite{AnelJoyal}; we adopt this convention in this more general setting. The functor $\triangleright$ is called the \emph{Sweedler product} and the enriching hom-functor $P$ is called \emph{Sweedler hom}.

In fact, the \emph{universal measuring coalgebra} $P(a,b)$ of two $k$-algebras $a,b$ was introduced by Sweedler in \cite{Sweedler}, where it was explicitly constructed (as a sum of subcoalgebras of the cofree coalgebra on $\Hom_k(a,b)$) and verified that it satisfies the universal property of the above adjunction in $\ca{V}=\Vect_k$, see Theorem 7.0.4 therein. Broadly speaking, $P(a,b)$ is to be thought of as generalised monoid maps from $a$ to $b$; in fact, the monoid maps are precisely the group-like elements of $P(a,b)$,
namely those for which $\delta_{P(a,b)}(f)=f\ot f$ and $\epsilon_{P(a,b)}(f)=1$. 
Moreover, the special case functor $P(\mi,I)=(\mi)^\circ$ is the \emph{Sweedler dual} functor for which we have that in $\Vect_k$, algebra maps $a\to c^*$ to the linear dual of a coalgebra bijectively correspond to algebra maps $c\to a^\circ$ where $a^\circ$ is the subspace of $a^*$ of linear functions whose kernel contains a cofinite ideal.

For more details and information about such structures, the reader should refer to references such as \cite{MeasuringCoalgebras,Measuringcomonoid,Sweedlerdual,AnelJoyal,Onbimeasurings}.

\section{Sweedler theory for duoidal categories}\label{sec:SweedlerTheoryduoid}

In this section, we extend the central \cref{thm:STmoncats} to the context of
duoidal categories, or 2-monoidal categories, introduced
in~\cites{Species,MR2506256}. Our results are an extension of those valid for
braided monoidal categories since the latter can be regarded as special duoidal
categories. The extension is meaningful due to the number of different examples
arising from duoidal categoris that we shall be able to cover.

\subsection{Duoidal categories}
\label{sec:duoidal-categories}

Abstractly, a duoidal category is a pseudomonoid in the cartesian monoidal 2-category $\nc{MonCat}_\ell$ of monoidal categories, lax monoidal functors and monoidal natural transformations -- or equivalently a pseudomonoid in $\nc{MonCat}_c$ with oplax monoidal functors.

\begin{df}\label{def:duoidal}
  A \emph{duoidal} category $(\ca{V},\diamond,i,\star,j)$ is a category with two
  monoidal structures $(\ca{V},\diamond,I)$ and $(\ca{V},\star,j)$ along with a
  natural transformation
 \begin{equation}\label{eq:interchange}
  \zeta_{a,b,c,d}\colon (a\star b)\diamond (c\star d)\to(a\diamond c)\star(b\diamond d)
 \end{equation}
called the \emph{interchange law} and three morphisms
\begin{equation}\label{eq:deltamuiota}
 \delta_i\colon i\to i\star i,\quad\mu_j\colon j\diamond j\to j,\quad\iota\colon i\to j
\end{equation}
subject to axioms that express that the functors $\star,j$ are lax
$\diamond$-monoidal and the associativity and unit constraints of the monoidal
structure $(\star,j)$ are $\diamond$-monoidal transformations. These axioms
express equally well that and $\diamond,i$ are oplax $\star$-monoidal and the
associativity and unit constraints of $(\diamond,i)$ are $\star$-monoidal
natural transformations.
\end{df}

There is a wealth of duoidal category examples in the literature, \cite{Species}
being a good source. We do not intend to give an exhaustive list of references,
only to mention that other examples can be found in \cite{MR2506256}, related to
factorisation systems, and in~\cite{Commutativity}, related to alrebraic
theories and operads.

Under certain conditions, there is a procedure to construct a duoidal functor category out
of a pair of monoidal categories. We shall use instances the following
proposition in the subsequent sections. The proof can be found in
Appendix~\ref{sec:duoid-struct-categ}, where the subject is treated with an
extra degree of generality.

\begin{prop}\label{prop:duoidalpresheaves}
  Let $(\mathcal{A},\otimes, k)$ be a small monoidal category and
  $(\mathcal{B},\square,i,\gamma)$ a cocomplete braided monoidal category. Then,
  $[\mathcal{A},\mathcal{B}]$ has a duoidal structure $(\conv,J,\square,i)$
  where $(\square,i)$ is the pointwise monoidal structure and $(\conv,J)$ is
  the convolution of $(\otimes,k)$ with $(\square,i)$.
\end{prop}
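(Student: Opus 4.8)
The plan is to read the asserted structure directly off \cref{def:duoidal}: with $\diamond=\conv$ and $\star=\square$, the duoidal axioms say precisely that $\square$, together with an interchange map $\zeta$ and a unit map $\delta_i$, is a lax $\conv$-monoidal functor $[\mathcal{A},\mathcal{B}]\times[\mathcal{A},\mathcal{B}]\to[\mathcal{A},\mathcal{B}]$, that the constant functor $i$, together with maps $\iota$ and $\mu_j$, is a lax $\conv$-monoidal object, and that the associator and unitors of the pointwise $\square$ are $\conv$-monoidal natural transformations. So the first step is to recall that the convolution of $(\otimes,k)$ with $(\square,i)$, namely $(F\conv G)(a)=\int^{b,c}\mathcal{A}(b\otimes c,a)\cdot(Fb\square Gc)$ with unit $J=\mathcal{A}(k,-)\cdot i$ (writing $S\cdot X=\coprod_{S}X$ for the copower), is a monoidal structure on $[\mathcal{A},\mathcal{B}]$ — standard Day convolution, using that $\mathcal{A}$ is small, $\mathcal{B}$ cocomplete, and that $\square$ is cocontinuous in each variable (the degenerate cases being subsumed in \cref{sec:duoid-struct-categ}) — while the pointwise $(\square,i)$ is a braided monoidal structure inherited from $\mathcal{B}$.

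The second step constructs the four structure maps. For $\zeta$, I use that $((F\square F')\conv(G\square G'))(a)=\int^{b,c}\mathcal{A}(b\otimes c,a)\cdot\bigl((Fb\square F'b)\square(Gc\square G'c)\bigr)$, whereas cocontinuity of $\square$ rewrites $((F\conv G)\square(F'\conv G'))(a)$ as $\int^{b,c,b',c'}\mathcal{A}(b\otimes c,a)\cdot\mathcal{A}(b'\otimes c',a)\cdot\bigl((Fb\square Gc)\square(F'b'\square G'c')\bigr)$; the component $\zeta_a$ sends the $(b,c)$-summand to the $(b,c,b,c)$-summand by the diagonal of the set $\mathcal{A}(b\otimes c,a)$ on indexing sets, postcomposed with the pointwise braided middle-four interchange $(Fb\square F'b)\square(Gc\square G'c)\to(Fb\square Gc)\square(F'b\square G'c)$ built from $\gamma$ and the associators of $\square$. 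The remaining maps are collapse and diagonal maps of copowers: $\iota\colon J\to i$ is $\mathcal{A}(k,-)\cdot i\to i$ folding over the indexing set; $\mu_j\colon i\conv i\to i$ is the coend of the folds $\mathcal{A}(b\otimes c,a)\cdot(i\square i)\cong\mathcal{A}(b\otimes c,a)\cdot i\to i$; and $\delta_i\colon J\to J\square J$ is the copower of the diagonal $\mathcal{A}(k,-)\to\mathcal{A}(k,-)\times\mathcal{A}(k,-)$ composed with $i\cong i\square i$. A short dinaturality check makes each of these well defined on the coends, and all four are natural in their functor arguments.

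The third step is the verification of the axioms, which fall into the three families listed above. I expect the crux to be the associativity coherence for $\zeta$: unwound on a threefold convolution it compares, on indexing sets, the two iterated diagonals $\mathcal{A}(b\otimes c\otimes d,a)\to\mathcal{A}(b\otimes c\otimes d,a)^{\times 3}$ obtained by splitting off $(b\otimes c)$ versus $(c\otimes d)$ first, and, on objects of $\mathcal{B}$, two composites of associators and instances of $\gamma$ both reorganising $(X_1\square X_1')\square(X_2\square X_2')\square(X_3\square X_3')$ into $(X_1\square X_2\square X_3)\square(X_1'\square X_2'\square X_3')$; these agree by coassociativity of the set-theoretic diagonal together with the coherence theorem for braided monoidal categories. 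The remaining axioms reduce to counitality of the diagonal and the unit coherences of $\square$ and are routine bookkeeping. This yields the duoidal structure $(\conv,J,\square,i)$; the proof given in \cref{sec:duoid-struct-categ} establishes a sharper statement, replacing smallness of $\mathcal{A}$ and cocompleteness of $\mathcal{B}$ by the mere existence of the left Kan extensions along $\otimes$ that define $\conv$.
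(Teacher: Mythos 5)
Your construction is correct, but it takes a genuinely different route from the paper. The paper proves this proposition in Appendix~\ref{sec:duoid-struct-categ} (Theorem~\ref{thm:3} and Corollary~\ref{cor:3}) by a purely formal argument: a duoidal category is identified with a pseudomonoid in representable multicategories, the exponential 2-functor $(\mi)^{\mathbb{A}}$ is lax monoidal for the cartesian product (being right adjoint to the strong monoidal $\mathbb{A}\times\mi$), hence preserves pseudomonoids, and $\mathbb{B}^{\mathbb{A}}$ is represented by $[\mathcal{A},\mathcal{B}]$ with the convolution structure. All coherence is thereby absorbed into the 2-categorical formalism, and the argument immediately yields the stronger Theorem~\ref{thm:3} in which $\mathcal{B}$ is merely duoidal rather than braided. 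You instead build the four structure maps by hand at the level of coends --- the interchange as the diagonal on the indexing hom-sets followed by the braided middle-four interchange, and $\iota$, $\mu$, $\delta$ as folds and diagonals of copowers --- and verify the axioms via coassociativity of the diagonal and braided coherence. Your explicit interchange agrees with the formula the paper records in~\eqref{eq:31a}, and the dinaturality and coherence checks you defer are genuinely routine (the key input being that the braiding makes $\square\colon\mathcal{B}\times\mathcal{B}\to\mathcal{B}$ a monoidal functor), so the proof is sound; what you lose relative to the paper is that the sizeable list of duoidal axioms must each be checked rather than obtained for free, and what you gain is a self-contained argument with usable formulas. Two small inaccuracies: your names $\delta_i$ and $\mu_j$ are swapped relative to which unit is which here (the $\conv$-unit is $J$ and the $\square$-unit is the constant functor at $i$, so the maps are $\delta_J\colon J\to J\square J$ and $\mu\colon i\conv i\to i$ --- your actual maps are the right ones, only the labels clash); and the appendix's sharper statement generalises in the direction of a duoidal $\mathcal{B}$, not by weakening smallness or cocompleteness to mere existence of the Kan extensions.
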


For the reader's benefit we recall the (Day) convolution $(\conv,J)$ of $(\otimes,k)$
with $(\square,i)$, originating from \cite{DayConv}. For functors
$F,G\colon\mathcal{A}\to\mathcal{B}$, their convolution is the left Kan
extension of $F\square G$ along $\otimes$, whereas the unit $J$ is the left Kan
extension of $i$ along $k$.
\begin{equation}
  \label{eq:5}
  \begin{tikzcd}
    \mathcal{A}\times\mathcal{A}
    \ar[d,"\otimes"']
    \ar[r,"F\times G"{name=x}]
    \ar[d,shift left = 30pt,Rightarrow, shorten <=4pt,shorten >=4pt]
    &
    \mathcal{B}\times\mathcal{B}
    \ar[d,"\square"]
    \\
    \mathcal{A}
    \ar[r,"F\conv G"']
    &
    \mathcal{B},
  \end{tikzcd}
  \qquad
  \begin{tikzcd}
    \mathsf{1}
    \ar[d,"k"']
    \ar[r,""{name=x},equal]
    \ar[d,shift left = 21pt,Rightarrow,shorten <=4pt,shorten >=4pt]
    &
    \mathsf{1}
    \ar[d,"i"]
    \\
    \mathcal{A}
    \ar[r,"J"']
    &
    \mathcal{B}
  \end{tikzcd}
\end{equation}
The existence of the associativity and unit constrains depend upon the fact that
$\mathcal{B}$ is a cocomplete monoidal category, i.e., it is cocomplete and its
tensor product is cocontinuous in each variable.

Writting these Kan extensions as colimits, one has the familiar formulas in
terms of coends and copowers.
\begin{equation}
  \label{eq:dayconv}
  (F\conv G)(a)\cong
  \int^{b,c\in\mathcal{A}}\mathcal{A}(b\otimes c,a)\cdot
  F(b)\square G(c)
  \quad
  J(a)\cong \mathcal{A}(k,a)\cdot i
\end{equation}
The convolution
monoidal structure is left or right closed when $(\mathcal{B},\square,i)$ is
left or right closed and $\mathcal{B}$ is complete. The left or right intenal
homs admit the  expressions below.
\begin{equation}
  \label{eq:dayhom}
  [F,G]_\ell(a)\cong\int_{b\in\mathcal{A}}[F(b),G(a\otimes b)]_\ell
  \qquad
  [F,G]_r(a)\cong\int_{b\in\mathcal{A}}[F(b),G(b\otimes a)]_r
\end{equation}
We take the opportunity of our digression into convolution monoidal structures
to mention that a brading $\gamma^A$ on $\mathcal{A}$ together with a brading
$\mathcal{B}^B$ on
$\mathcal{B}$ induce a braiding for the convolution tensor product. The
components of this brading are induced by the universal property of coends and
the natural transformation
\begin{equation}
  \label{eq:25}
  \mathcal{A}(b\otimes c,a)\cdot F(b)\square G(c)
  \xrightarrow{\mathcal{A}((\gamma^A_{b,c})^{-1},a)\cdot \gamma^B_{F(b),G(c)}}
  \mathcal{A}(c\otimes b,a)\cdot G(c)\square F(b).
\end{equation}

Given a duoidal category as in Definition~\ref{def:duoidal}, the monoidal
structure $(\star,j)$ lifts to the category of
$\diamond$-monoids $\Mon(\ca{V},\diamond,i)$, which we abbreviate
$\Mon_{\diamond}(\ca{V})$. For instance, if $a$ and $b$ are
$\diamond$-monoids, then $a\star b$ has multiplication and unit
\begin{gather}
 (a\star b)\diamond(a\star b)\xrightarrow{\zeta}(a\diamond a)\star(b\diamond b)\xrightarrow{\mu\star\mu}a\star b \label{eq:starMon}\\
i\xrightarrow{\delta_i}i\star i\xrightarrow{\eta\star\eta}a\star b \nonumber
\end{gather}
where $\mu$ and $\eta$ depict the respective multiplications and units; compare to the earlier \cref{eq:monmon}. Dually,
the monoidal structure $(\diamond, i)$ lifts to the category
$\Comon_\star(\ca{V})$ of $\star$-comonoids in $\ca{V}$.

If the duoidal category $\ca{V}$ of Definition~\ref{def:duoidal} is
$\star$-braided (symmetric) in the sense of~\cite[\S 6.15]{Species}, then its
braiding (symmetry) lifts to $\Mon_\diamond(\ca{V})$. Dually, if $\ca{V}$ is
$\diamond$-braided (symmetric), its braiding (symmetry) lifts to
$\Comon_\star(\ca{V})$.

\subsection{Measuring morphisms in duoidal categories}
\label{sec:meas-morph-duoid}
In this section $(\ca{V},\diamond,i,\star,j)$ wil be a duoidal category.
\begin{df}
  \label{df:2}
  Let $(a,\eta^a,\mu^a)$ and $(b,\eta^b,\mu^b)$ be $\star$-monoids and
  $(c,\varepsilon,\delta)$ be a $\star$-comonoid in $\mathcal{V}$. A morphism
  $\varphi{}\colon c\diamond a\to b$ \emph{measures} (on the right), or is a (right)
  \emph{measuring morphism} from $a$ to $b$, if the following diagrams commute.
  \begin{equation}
    \label{eq:8}
    \begin{tikzcd}[column sep=small]
      c\diamond (a\star a)
      \ar[r,"\delta\diamond 1"]
      \ar[d,"1\diamond \mu^a"']
      &
      (c\star c)\diamond (a\star a)
      \ar[r,"\zeta"]
      &
      (c\diamond a )\star(c\diamond a)
      \ar[d,"\varphi{}\star \varphi{}"]
      \\
      c\diamond a
      \ar[r,"\varphi{}"]
      &
      b
      &
      b\star b
      \ar[l,"\mu^b"']
    \end{tikzcd}
    \qquad
    \begin{tikzcd}[column sep=small]
      c\diamond j
      \ar[r,"\varepsilon\diamond 1"]
      \ar[d,"1\diamond \eta^a"']&
      j\diamond j\ar[r,"\mu_j"]
      &
      j
      \ar[d,"\eta^b"]
      \\
      c\diamond a\ar[rr,"\varphi{}"]
      &&
      b
    \end{tikzcd}
  \end{equation}
\end{df}
When the duoidal category is induced by a braided monoidal category, so
$\diamond=\star$ and $i=j$, then a measuring morphism is the obvious
generalisation of the measurings introduced in~\cite[Ch.~VII]{Sweedler}.

Given two $\star$-monoids $a$ and $b$ as in Definition~\ref{df:2}, there is a
functor $\mathrm{Meas}(\mi;a,b)\colon \Comon_\star(\ca{V})^{\mathrm{op}}\to\Set$ 
that sends a $\star$-comonoid $c$ to the set $\mathrm{Meas}(c;a,b)$ of measuring morphisms
$\varphi{}\colon c\diamond a\to b$, and a comonoid
morphism $g$ to the function $\varphi\mapsto\varphi{}\cdot(g\diamond 1_a)$.
\begin{df}
  \label{df:3}
  A \emph{universal measuring morphism} for a pair of $\star$-monoids $a$,
  $b$, is a $\star$-comonoid $P(a,b)$ together with a measuring morphism
  $\phi(a,b)\colon P(a,b)\diamond a \to b$ that is a representation of the
  functor $\mathrm{Meas}(\mi;a,b)$ of the previous paragraph. The comonoid $P(a,b)$ is
  called the \emph{universal measuring comonoid} of the pair $a$, $b$.
\end{df}
Another way of expressing this definition is that each measuring
morphism $\varphi\colon c\diamond a\to b$ factors as
$\phi(a,b)\cdot (g\diamond 1_{a})$ for a unique morphism of $\star$-comonoids
$g\colon c\to P(a,b)$; namely $P(a,b)$ is terminal in an appropriate category of measuring comonoids.

Subject to the existence of a $\diamond$-monoidal closed structure, we can now mimick the arguments that lead to the previous \cref{eq:actioncomon} 
and establish the following.

\begin{prop}\label{prop:duoidalaction}
Suppose the duoidal category $(\ca{V},\diamond,I,\star,J)$ is right
$\diamond$-closed. Then, the right internal hom $[\mi,\mi]^\diamond$ lifts to a
functor
\begin{equation}\label{eq:diamondhom}
 [\mi,\mi]^\diamond\colon\Comon_\star(\ca{V})^\op\times\Mon_\star(\ca{V})\to\Mon_\star(\ca{V})
\end{equation}
which carries the structure of an action of the
$\diamond$-monoidal category $\Comon_\star(\ca{V})^{\mathrm{op,rev}}$ on the category $\Mon_\star(\ca{V})$.
\end{prop}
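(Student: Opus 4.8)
The plan is to mimic, in the duoidal setting, the classical argument sketched for \cref{eq:actioncomon} in \cref{sec:STmoncats}, paying attention that no braiding of $\ca{V}$ is available but only the interchange law $\zeta$ and the unit morphisms $\delta_i,\mu_j,\iota$. First I would recall that right $\diamond$-closedness gives $[\mi,\mi]^\diamond\colon\ca{V}^{\op}\times\ca{V}\to\ca{V}$ with $\ca{V}(a\diamond b,c)\cong\ca{V}(b,[a,c]^\diamond)$, and I would show that for a $\star$-comonoid $c$ and a $\star$-monoid $b$, the object $[c,b]^\diamond$ carries a natural $\star$-monoid structure. The multiplication should be the transpose under the $\diamond$-adjunction of the composite
\begin{equation}
 c\diamond\bigl([c,b]^\diamond\star[c,b]^\diamond\bigr)
 \xrightarrow{\delta\diamond 1}
 (c\star c)\diamond\bigl([c,b]^\diamond\star[c,b]^\diamond\bigr)
 \xrightarrow{\zeta}
 \bigl(c\diamond[c,b]^\diamond\bigr)\star\bigl(c\diamond[c,b]^\diamond\bigr)
 \xrightarrow{\mathrm{ev}\star\mathrm{ev}}
 b\star b
 \xrightarrow{\mu^b}
 b,
\end{equation}
and the unit the transpose of $c\diamond j\xrightarrow{\varepsilon\diamond 1}j\diamond j\xrightarrow{\mu_j}j\xrightarrow{\eta^b}b$ — exactly the data that, read differently, says $\mathrm{ev}\colon c\diamond[c,b]^\diamond\to b$ is a measuring morphism in the sense of \cref{df:2}. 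Associativity and unitality of this $\star$-multiplication are then a diagram chase using the associativity/unit axioms of the duoidal structure (the ones expressing that $\star,j$ are lax $\diamond$-monoidal and that the $\star$-constraints are $\diamond$-monoidal), together with (co)associativity of $c$ and associativity of $\mu^b$.

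Next I would check functoriality: a $\star$-comonoid morphism $c'\to c$ and a $\star$-monoid morphism $b\to b'$ induce a $\star$-monoid morphism $[c,b]^\diamond\to[c',b']^\diamond$, which is routine from the (di)naturality of the internal hom and evaluation, so that \cref{eq:diamondhom} is a well-defined functor lifting $[\mi,\mi]^\diamond$. Then comes the action structure. By \cref{l:1}, to give an (o)lax action of $\Comon_\star(\ca{V})^{\op,\mathrm{rev}}$ on $\Mon_\star(\ca{V})$ via $\pitchfork=[\mi,\mi]^\diamond$ it suffices — since all functors in sight admit parametrised $\diamond$-adjoints on the underlying level — to exhibit the associativity and unit isomorphisms and verify the action axioms; the cleanest route is to lift the known action structure of $[\mi,\mi]^\diamond$ on $\ca{V}$ (from the analogue of \cite[Lem.~4.3.2]{PhDChristina}: $[a\diamond b,c]^\diamond\cong[b,[a,c]^\diamond]$ and $[i,c]^\diamond\cong c$) to the categories of (co)monoids. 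Concretely, the isomorphism $[c,b]^\diamond\star$-comonoid-structure and the isomorphism $[c'\diamond c,b]^\diamond\cong[c,[c',b]^\diamond]^\diamond$ must be shown to be morphisms of $\star$-monoids, where on the left $c'\diamond c$ is the $\star$-comonoid whose comultiplication uses $\zeta$ and whose counit uses $\mu_j$ — i.e. the lifted $\diamond$-monoidal structure on $\Comon_\star(\ca{V})$. Similarly $[i,\mi]^\diamond\cong\id$ must respect $\star$-monoid structure, using $\delta_i$ and $\iota$.

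The main obstacle, and the bulk of the work, is the last verification: that the canonical $\diamond$-closedness isomorphisms are compatible with the $\star$-monoid structures just defined. This is where the duoidal axioms are genuinely needed — in the braided case one would invoke the braiding to move tensor factors past each other, but here every such move must be routed through $\zeta$, $\delta_i$, $\mu_j$ or $\iota$, and the coherence axioms of \cref{def:duoidal} are precisely what guarantees the resulting diagrams commute. I expect to reduce each compatibility to an equality of two morphisms $c'\diamond c\diamond X\to b$ (with $X$ a $\star$-power of the relevant hom), transpose everything across the $\diamond$-adjunction so that no internal homs appear, and then match the two sides using (i) naturality of $\zeta$, (ii) the axiom relating $\zeta$ to itself and to the $\star$-associator (the ``$\diamond$ is oplax $\star$-monoidal'' pentagon-type axiom), and (iii) the unit axioms involving $\delta_i$ and $\mu_j$. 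Once these isomorphisms are shown to live in $\Mon_\star(\ca{V})$, the action axioms for $\pitchfork$ follow automatically from those already holding in $\ca{V}$, since the forgetful functor $\Mon_\star(\ca{V})\to\ca{V}$ is faithful; invertibility of the constraints is inherited likewise, giving a genuine action rather than merely a lax one, and \cref{l:1} then packages this as an action of $\Comon_\star(\ca{V})^{\op,\mathrm{rev}}$ as claimed.
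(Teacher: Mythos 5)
Your proposal is correct in substance but takes a genuinely more computational route than the paper. The paper's proof is essentially formal: since $\diamond\colon(\ca{V},\star,j)\times(\ca{V},\star,j)\to(\ca{V},\star,j)$ is oplax monoidal by the definition of a duoidal category, \cref{prop:doctrinalparameter} immediately makes its parametrised right adjoint $[\mi,\mi]^\diamond$ lax $\star$-monoidal, hence it lifts to (co)monoids via $\Mon_\star(\ca{V}^\op)\cong\Comon_\star(\ca{V})^\op$; and the associativity and unit constraints $[b,[a,c]]\cong[a\diamond b,c]$, $[i,b]\cong b$ are mates of the $\diamond$-associator and unitor, which are $\star$-monoidal transformations by the duoidal axioms, so by the mateship half of the same lemma they are monoidal transformations and lift as well. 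You instead unwind what this lax structure gives on objects --- your explicit convolution multiplication and unit on $[c,b]^\diamond$ are exactly the paper's \cref{eq:duoidalconvolution}, recorded there as a remark rather than used in the proof --- and you then propose to verify by direct transposed diagram chases that the closedness isomorphisms respect these structures. Both routes work; the paper's buys brevity by pushing all coherence onto \cref{prop:doctrinalparameter} and the duoidal axioms once and for all, while yours buys concrete formulas at the cost of the ``bulk of the work'' you correctly identify but only sketch. Two small caveats: your appeal to \cref{l:1} is not really available here, since the parametrised left adjoints $c\diamond\mi$ need not lift to $\Mon_\star(\ca{V})$ without the local presentability hypotheses of \cref{thm:measuduoidal} (this is harmless, as you immediately revert to exhibiting the constraints directly); and the action axioms do not follow ``automatically from faithfulness of the forgetful functor'' alone --- faithfulness reduces them to the corresponding axioms in $\ca{V}$ only once you know the lifted constraints are carried to the underlying ones, which is part of what your compatibility checks must deliver.
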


\begin{proof}
  We shall write $[a,b]$ for $[a,b]^\diamond$ in this proof.
 Since $\diamond\colon(\ca{V},\star,J)\times(\ca{V},\star,J)\to(\ca{V},\star,J)$ is a oplax monoidal functor by \cref{def:duoidal}, its right 
parametrised adjoint $[\mi,\mi]\colon(\ca{V},\star,J)^\op\times(\ca{V},\star,J)\to(\ca{V},\star,J)$ is lax monoidal by
\cref{prop:doctrinalparameter}. Therefore it lifts to a functor between the respective categories of (co)monoids as in \cref{eq:diamondhom},
bearing in mind that $\Mon_\star(\ca{V}^\op)\cong\Comon_\star(\ca{V})^\op$ as is
the case for any monoidal category.

Furthermore, the canonical natural ismorphism $[b,[a,c]]\cong [a\diamond b,c]$ is
the mate of the associativity natural isomorphism $(a\diamond b)\diamond c\cong
a\diamond(b\diamond c)$, which is a $\star$-monoidal natural transformation by
the definition of duoidal category. Then, $[b,[a,c]]\cong [a\diamond b,c]$ is also a
monoidal transformation betwen lax $\star$-monoidal functors by \cref{prop:doctrinalparameter}, and, therefore,
lifts to the respective categories of monoids, providing an associativity
constraint for the action of of the statement.
A similar argument shows that the natural isomorphism $[i,b]\cong b$ is
monoidal, providing a unit constraint for the action.
\end{proof}

The above functor generalizes the `convolution' product of maps between a coalgebra and an algebra in the duoidal setting:
the multiplication $[c,b]^\diamond\star[c,b]^\diamond\to[c,b]^\diamond$ and unit $j\to[c,b]^\diamond$ on some $[c,b]^\diamond$ for $c$ a $\star$-comonoid and $b$ a $\star$-monoid are formed as the $\diamond$-adjuncts of
\begin{equation}
  \label{eq:duoidalconvolution}
    c\diamond
    ([c,b]^\diamond\star[c,b]^\diamond)
    \xrightarrow{\delta\diamond 1}
    (c\star c)\diamond([c,b]^\diamond\star[c,b]^\diamond)
    \xrightarrow{\zeta}
    (c\diamond [c,b]^\diamond)\star(c\diamond [c,b]^\diamond)
    \xrightarrow{\mathrm{ev}\star\mathrm{ev}}
    b\star b\xrightarrow{\mu^b}
    b
\end{equation}
 and $j\diamond c\cong c\xrightarrow{\epsilon}j\xrightarrow{\eta}b$.

The proof of the following lemma, which gives the intution behind the measuring morphism \cref{df:2}, is a straightforward application of the
tensor-hom adjunction and left to the reader.
\begin{lemma}
  \label{l:3}
  Suppose that the duoidal category $\mathcal{V}$ is right $\diamond$-closed.
  Given $\star$-monoids $a$ and $b$, and a $\diamond$-comonoid $c$, consider a
  morphism $f\colon c\diamond a\to b$ and its transpose $\hat f\colon a\to
  [c,b]^{\diamond}$. Then, $f$ is a measuring morphism if and only if $\hat f$
  is a morphism of monoids.
\end{lemma}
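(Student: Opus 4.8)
The plan is to read off both implications directly from the $\diamond$-tensor--hom adjunction $\mathcal{V}(c\diamond x,b)\cong\mathcal{V}(x,[c,b]^{\diamond})$, which is available because $\mathcal{V}$ is right $\diamond$-closed. Under this bijection $f$ corresponds to $\hat f$, the counit is $\mathrm{ev}\colon c\diamond[c,b]^{\diamond}\to b$, and, by naturality of the bijection in the variable $x$, the transpose of a composite $g\cdot k$ (with $k\colon x'\to x$ and $g\colon x\to[c,b]^{\diamond}$) is $\bar g\cdot(1_c\diamond k)$, where $\bar g$ is the transpose of $g$. Since $\hat f$ is a morphism of $\star$-monoids precisely when it commutes with the multiplications and with the units of $a$ and $[c,b]^{\diamond}$, and these two equations live in the homsets $\mathcal{V}(a\star a,[c,b]^{\diamond})$ and $\mathcal{V}(j,[c,b]^{\diamond})$ respectively, it suffices to transpose each of them and to recognise the result as one of the two squares of \eqref{eq:8}.

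First I would handle multiplicativity. Transposing the left-hand side of $\hat f\cdot\mu^a=\mu^{[c,b]^{\diamond}}\cdot(\hat f\star\hat f)$ yields $f\cdot(1_c\diamond\mu^a)\colon c\diamond(a\star a)\to b$. For the right-hand side, recall from \eqref{eq:duoidalconvolution} that $\mu^{[c,b]^{\diamond}}$ is the $\diamond$-adjunct of the composite $\Phi\colon c\diamond([c,b]^{\diamond}\star[c,b]^{\diamond})\to b$ displayed there; hence the transpose of $\mu^{[c,b]^{\diamond}}\cdot(\hat f\star\hat f)$ is $\Phi\cdot\bigl(1_c\diamond(\hat f\star\hat f)\bigr)$. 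I would then slide $1_c\diamond(\hat f\star\hat f)$ through $\Phi$: bifunctoriality of $\diamond$ moves it past $\delta\diamond 1$, naturality of the interchange law $\zeta$ in its third and fourth arguments moves it past $\zeta$, and finally $(\mathrm{ev}\star\mathrm{ev})\cdot\bigl((1_c\diamond\hat f)\star(1_c\diamond\hat f)\bigr)=f\star f$ because $\mathrm{ev}\cdot(1_c\diamond\hat f)=f$. The outcome is the morphism $\mu^b\cdot(f\star f)\cdot\zeta\cdot(\delta\diamond 1)$; together with the previous computation this shows that, the adjunction being a bijection, $\hat f\cdot\mu^a=\mu^{[c,b]^{\diamond}}\cdot(\hat f\star\hat f)$ holds if and only if the left-hand square of \eqref{eq:8} commutes.

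Next I would do the same, more easily, for the units. The transpose of the left-hand side of $\hat f\cdot\eta^a=\eta^{[c,b]^{\diamond}}$ is $f\cdot(1_c\diamond\eta^a)\colon c\diamond j\to b$; and by \eqref{eq:duoidalconvolution} the unit $\eta^{[c,b]^{\diamond}}$ is the $\diamond$-adjunct of $c\diamond j\xrightarrow{\varepsilon\diamond 1}j\diamond j\xrightarrow{\mu_j}j\xrightarrow{\eta^b}b$, so the transpose of the right-hand side is exactly that composite. Equality of these two transposes is commutativity of the right-hand square of \eqref{eq:8}. Conjoining the two equivalences gives the lemma.

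I expect the only obstacle to be bookkeeping: one must invoke naturality of the parametrised adjunction to commute transposition past each structural map, and naturality of $\zeta$ in exactly the two middle variables, in the correct order. None of this is conceptually difficult — which is why the statement is reasonably left to the reader — but writing the full chain of equalities out would take more space than it is worth.
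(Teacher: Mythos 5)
Your argument is correct and is precisely the ``straightforward application of the tensor--hom adjunction'' that the paper declares and leaves to the reader: transpose each of the two monoid-morphism equations for $\hat f$ under $\mathcal{V}(c\diamond x,b)\cong\mathcal{V}(x,[c,b]^{\diamond})$, using naturality of the adjunction, bifunctoriality of $\diamond$, naturality of $\zeta$ in its last two arguments, and the triangle identity $\mathrm{ev}\cdot(1_c\diamond\hat f)=f$, to land exactly on the two squares of \eqref{eq:8}. The bookkeeping you describe is the whole proof; nothing is missing.
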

If the universal measuring comonoid $P(a,b)$ exists, then there is a natural
isomoprhism
\begin{equation}
  \label{eq:10}
  \Comon_\star(c,P(a,b))\cong\mathrm{Meas}(c;a,b)\cong
  \Mon_\star(a,[c,b]^{\diamond}).
\end{equation}
Therefore, the assignment $(a,b)\mapsto P(a,b)$, if it exists, will provide a
parametrised adjoint to $[\mi,\mi]^\diamond$. This is next section's subject
of study.

\subsection{Enrichment of monoids in comonoids}
\label{sec:enrichm-mono-comon}

We now wish to extend \cref{thm:measucoalg,thm:STmoncats} in the context of duoidal categories. Recall that a functor is called accessible when it preserves filtered colimits.

\begin{thm}\label{thm:measuduoidal}
Suppose $(\ca{V},\diamond,i,\star,j)$ is a duoidal category whose monoidal
structure $(\ca{V},\diamond,i)$ is biclosed, with right internal hom
$[\mi,\mi]^\diamond$. Suppose further that $\mathcal{V}$ is a locally
presentable category and $\star$ is an accessible functor. Then, each functor $$([\mi,b]^\diamond)^\op\colon\Comon_\star(\ca{V})\to\Mon_\star(\ca{V})^\op$$ has a 
right adjoint $P(\mi,b)$ and each functor $$[c,\mi]^\diamond\colon\Mon_\star(\ca{V})\to\Mon_\star(\ca{V})$$ has a left adjoint 
$c\triangleright^\diamond\mi$, via natural bijections
\begin{equation}\label{eq:duoidalhomPtriangle}
  \Comon_\star(\ca{V})(c,P(a,b))\cong\Mon_\star(\ca{V})(a,[c,b]^\diamond)
  \cong \Mon_\star(\ca{V})(c\triangleright^\diamond a,b)
\end{equation}
\end{thm}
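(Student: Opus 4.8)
The plan is to reduce everything to adjoint functor theorems on locally presentable categories. The key enabling facts are \cref{prop:MonComonproperties} (so that $\Mon_\star(\ca{V})$ and $\Comon_\star(\ca{V})$ are locally presentable, once I check $\star$ is accessible on them), \cref{prop:duoidalaction} (giving the action $[\mi,\mi]^\diamond$ of $\Comon_\star(\ca{V})^{\mathrm{op,rev}}$), and the biclosedness of $\diamond$. First I would observe that since $\ca{V}$ is locally presentable and $\star$ is accessible, \cref{prop:MonComonproperties} applies to $(\ca{V},\star,j)$: the category $\Mon_\star(\ca{V})$ is locally presentable and monadic over $\ca{V}$, and $\Comon_\star(\ca{V})$ is locally presentable and comonadic over $\ca{V}$. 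Dually, applying the same proposition to $\ca{V}^{\mathrm{op}}$ (whose $\star^{\mathrm{op}}$ tensor is accessible because $\star$ preserves filtered colimits in each variable, hence $\star^{\mathrm{op}}$ preserves cofiltered limits — here one must instead argue via comonadicity and the fact that $\Comon_\star(\ca{V})\cong\Mon_{\star^{\mathrm{op}}}(\ca{V}^{\mathrm{op}})^{\mathrm{op}}$, which is where some care is needed) one gets that $\Comon_\star(\ca{V})$ is locally presentable.

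Next I would produce the left adjoint $c\triangleright^\diamond\mi$ to $[c,\mi]^\diamond\colon \Mon_\star(\ca{V})\to\Mon_\star(\ca{V})$. The functor $[c,\mi]^\diamond$ is the lifted right internal hom; because $\diamond$ is biclosed, $[c,\mi]^\diamond\colon \ca{V}\to\ca{V}$ has a left adjoint $c\diamond(\mi)$ (up to the appropriate handedness), and this lifts: on monoids, $[c,\mi]^\diamond$ is a right adjoint in an accessible-functor situation between locally presentable categories, so it suffices to check $[c,\mi]^\diamond$ preserves limits and is accessible — both inherited from the underlying $[c,\mi]^\diamond$ on $\ca{V}$ since limits and filtered colimits in $\Mon_\star(\ca{V})$ are computed in $\ca{V}$ (monadicity). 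Then the adjoint functor theorem for locally presentable categories (the variant of the Special Adjoint Functor Theorem cited as \cite[Thm.~5.33]{Kelly}) yields the left adjoint $c\triangleright^\diamond\mi$, and the second bijection of \cref{eq:duoidalhomPtriangle} follows.

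For the first bijection — the existence of the Sweedler hom $P(\mi,b)$ — I would show that the functor $([\mi,b]^\diamond)^{\mathrm{op}}\colon \Comon_\star(\ca{V})\to\Mon_\star(\ca{V})^{\mathrm{op}}$ is cocontinuous and accessible, then invoke the same adjoint functor theorem: a cocontinuous accessible functor out of a locally presentable category has a right adjoint. Cocontinuity amounts to $[\mi,b]^\diamond\colon \Comon_\star(\ca{V})^{\mathrm{op}}\to\Mon_\star(\ca{V})$ turning colimits in $\Comon_\star(\ca{V})^{\mathrm{op}}$ (i.e. limits in $\Comon_\star(\ca{V})$) into limits; since colimits of $\star$-comonoids are created in $\ca{V}$ (comonadicity, so also limits of comonoids are subtle — actually colimits are created, limits are not), I would instead phrase it as: $[\mi,b]^\diamond$ sends colimits in $\Comon_\star(\ca{V})$ to limits in $\Mon_\star(\ca{V})$, which reduces via the forgetful functors (colimits of comonoids and limits of monoids both created in $\ca{V}$) to the fact that $[\mi,b]^\diamond\colon\ca{V}^{\mathrm{op}}\to\ca{V}$ sends colimits to limits, i.e. is a right adjoint on $\ca{V}$ in the first variable — which holds by biclosedness of $\diamond$. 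Accessibility is similar, using that filtered colimits in $\Comon_\star(\ca{V})$ and cofiltered limits — here again one uses that $\Comon_\star(\ca{V})$ being locally presentable means it has all limits and filtered colimits computed compatibly. Once $P(\mi,b)$ is obtained, \cref{l:3} identifies both adjunction hom-sets with $\mathrm{Meas}(c;a,b)$, giving the natural bijections of \cref{eq:duoidalhomPtriangle} and naturality in all variables.

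The main obstacle I expect is the bookkeeping around \emph{handedness and accessibility of $\Comon_\star(\ca{V})$}: verifying that the accessibility hypothesis on $\star$ genuinely forces $\Comon_\star(\ca{V})$ to be locally presentable (the delicate direction, since comonoid categories are not obviously accessible — one leans on \cref{prop:MonComonproperties} and the comonadicity statement, which already packages this), and tracking which of the two internal homs of the biclosed $\diamond$ is the correct adjoint in each variable of $[\mi,\mi]^\diamond$ so that the lifts to $\Mon_\star$ and $\Comon_\star$ are the stated ones. The genuinely new content beyond \cref{thm:measucoalg} is minimal: the $\diamond$-biclosedness replaces the braided closed structure, \cref{prop:duoidalaction} replaces \cref{prop:homaction}, and the rest is the same adjoint-functor-theorem machinery, so the proof should be short once these identifications are pinned down.
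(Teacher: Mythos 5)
Your proposal is correct in substance. For the existence of $P(\mi,b)$ it follows the paper's route exactly: reduce cocontinuity of $([\mi,b]^\diamond)^\op$ along the comonadic forgetful functor $\Comon_\star(\ca{V})\to\ca{V}$ and the monadic one $\Mon_\star(\ca{V})\to\ca{V}$ to the contravariant adjunction $([\mi,b]^\diamond)^\op\dashv[\mi,b]_\ell^\diamond$ on $\ca{V}$ (this is precisely where biclosedness enters, and your worry about handedness is well placed: the right hom in the first variable is a left adjoint only because the \emph{left} hom supplies its right adjoint), and then apply the special adjoint functor theorem for locally presentable categories; cocontinuity alone suffices here, so your extra accessibility check is harmless but unnecessary. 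Where you genuinely diverge is the left adjoint $c\triangleright^\diamond\mi$: the paper runs Dubuc's Adjoint Triangle Theorem over the monadic square, using only that $\Mon_\star(\ca{V})$ has coequalizers and that $c\diamond\mi\dashv[c,\mi]^\diamond$ downstairs, whereas you argue that $[c,\mi]^\diamond$ on monoids preserves limits and is accessible and invoke the adjoint functor theorem for locally presentable categories. Your route works, but the one step you assert without justification is the accessibility of the underlying $[c,\mi]^\diamond\colon\ca{V}\to\ca{V}$: this is not the finitary accessibility hypothesised for $\star$, and you need the general fact that a right adjoint between locally presentable categories preserves $\lambda$-filtered colimits for some $\lambda$ (Ad\'amek--Rosick\'y), after which limits and $\lambda$-filtered colimits in $\Mon_\star(\ca{V})$ being computed in $\ca{V}$ finishes the argument. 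The Dubuc route buys you freedom from this point (no accessibility of the internal hom is ever needed), at the cost of quoting a less elementary theorem; your route is more uniform with the first half but leans on local presentability more heavily. Either way the bijections \cref{eq:duoidalhomPtriangle} and their naturality come out as you say.
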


\begin{proof}
By \cref{prop:MonComonproperties}, since the monoidal product $\mi\star\mi$ preserves filtered colimits, both categories $\Mon_\star(\ca{V})$ and 
$\Comon_\star(\ca{V})$ are themselves locally presentable therefore complete, cocomplete and with a small dense subcategory. As a result, sheer 
cocontinuity of $([\mi,b]^\diamond)^\op$ will be enough to establish the first adjunction, whereas for the second one a more classical approach will 
suffice.

In the commutative diagram displayed on the left below, both vertical functors
are comonadic and the bottom functor is cocontinuous, being a left adjoint
$[\mi,b]^\op\dashv[\mi,b]$ due to biclosedness. 
As a result, the top one is also cocontinuous, thus
has a right adjoint $P(\mi,b)$. Here we have used
Proposition~\ref{prop:MonComonproperties}.
\begin{displaymath}
 \begin{tikzcd}[column sep=.7in]
\Comon_\star(\ca{V})\ar[r,"{([\mi,b]^\diamond)^\op}"]\ar[d] & \Mon_\star(\ca{V})^\op\ar[d] \\
\ca{V}\ar[r,"{([\mi,b]^\diamond})^\op"] & \ca{V}^\op
\end{tikzcd}
\qquad
 \begin{tikzcd}[column sep=.7in]
\Mon_\star(\ca{V})\ar[r,"{[c,\mi]^\diamond}"]\ar[d] & \Mon_\star(\ca{V})\ar[d] \\
\ca{V}\ar[r,"{[c,\mi]^\diamond}"] & \ca{V}
 \end{tikzcd}
\end{displaymath}
Similarly, both vertical functors in the commutative diagram on the right are
monadic, while the the bottom functor has a left adjoint
$c\diamond \mi\dashv[c,\mi]^\diamond$. Dubuc's Adjoint Triangle Theorem applies
to give also a top left adjoint $c\triangleright^\diamond\mi$, since
$\Mon_\star(\ca{V})$ has coequalizers by
Proposition~\ref{prop:MonComonproperties}.
\end{proof}

Combining the above \cref{thm:measuduoidal} with the fact that
$[\mi,\mi]^\diamond$ is an action by \cref{prop:duoidalaction}, we are in the
position of applying \cref{prop:4} to provide an enrichment of $\star$-monoids in $\star$-comonoids.

\begin{thm}\label{thm:sweedlerduoidal}
Suppose $(\ca{V},\diamond,i,\star,j)$ is duoidal category
such that  $\mathcal{V}$ a locally presentable category, $\star$ is an
accessible functor and the monoidal structure $(\diamond,i)$ is biclosed. Then, there exists a tensored and cotensored
$\Comon_\star(\mathcal{V})$-category whose underlying category is (isomorphic
to) $\Mon_\star(\ca{V})$.
\end{thm}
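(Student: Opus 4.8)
The plan is to recognise the statement as a direct instance of \cref{cor:5}, taking the biclosed base monoidal category there to be $\Comon_\star(\mathcal{V})$ equipped with its lifted $(\diamond,i)$-structure, and the category $\mathcal{A}$ there to be $\Mon_\star(\mathcal{V})$. The triple of functors $(\triangleright,P,\pitchfork)$ of \cref{cor:5} is instantiated by the Sweedler product $\triangleright^\diamond$, the universal measuring comonoid functor $P$, and the lifted right internal hom $[\mi,\mi]^\diamond$ in the role of $\pitchfork$; the two required parametrised adjunctions are exactly the bijections of \cref{eq:duoidalhomPtriangle}, supplied by \cref{thm:measuduoidal} (whose hypotheses coincide verbatim with ours), while the ``$\triangleright$ is an action of $\mathcal{V}$, equivalently $\pitchfork$ is an action of $\mathcal{V}^{\mathrm{op,rev}}$'' clause is provided by \cref{prop:duoidalaction} together with \cref{l:1}. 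Granting all this, \cref{cor:5} immediately yields a $\Comon_\star(\mathcal{V})$-category with homs $P(a,b)$, underlying category $\Mon_\star(\mathcal{V})$, tensored by $\triangleright^\diamond$ and cotensored by $[\mi,\mi]^\diamond$, which is precisely the assertion.

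Concretely, I would first collect the inputs. Since $\mathcal{V}$ is locally presentable and $\star$ is accessible, \cref{prop:MonComonproperties} makes both $\Mon_\star(\mathcal{V})$ and $\Comon_\star(\mathcal{V})$ locally presentable, with $\Comon_\star(\mathcal{V})$ comonadic over $\mathcal{V}$. Biclosedness of $(\diamond,i)$ gives right $\diamond$-closedness, so \cref{prop:duoidalaction} applies and $[\mi,\mi]^\diamond\colon\Comon_\star(\mathcal{V})^{\op}\times\Mon_\star(\mathcal{V})\to\Mon_\star(\mathcal{V})$ is an action of $\Comon_\star(\mathcal{V})^{\mathrm{op,rev}}$. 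The same hypotheses feed \cref{thm:measuduoidal}, producing the adjoints $P(\mi,b)$ and $c\triangleright^\diamond\mi$ and the natural bijections $\Comon_\star(\mathcal{V})(c,P(a,b))\cong\Mon_\star(\mathcal{V})(a,[c,b]^\diamond)\cong\Mon_\star(\mathcal{V})(c\triangleright^\diamond a,b)$.

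The one point that genuinely needs an argument is that the lifted monoidal structure $(\diamond,i)$ on $\Comon_\star(\mathcal{V})$ is biclosed, since that is a standing hypothesis on the base in \cref{cor:5} (and is needed even just to speak of cotensors and tensors in this paper's framework, which require right- and left-closedness of the base respectively). Here I would use that the forgetful functor $\Comon_\star(\mathcal{V})\to\mathcal{V}$ creates all colimits — the requisite $\star$-comonoid structure on a colimit being forced by the universal property, using only functoriality of $\star$ — so that this forgetful functor is strict $\diamond$-monoidal and cocontinuous. As $\diamond$ is cocontinuous in each variable on $\mathcal{V}$ (biclosedness there), the lifted functors $c\diamond\mi$ and $\mi\diamond c$ are therefore cocontinuous on $\Comon_\star(\mathcal{V})$; and since $\Comon_\star(\mathcal{V})$ is locally presentable, the adjoint functor theorem for locally presentable categories furnishes the left and right internal homs, so $(\Comon_\star(\mathcal{V}),\diamond,i)$ is biclosed.

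I expect this biclosedness verification to be the only delicate step; everything else is the bookkeeping of $\op$/$\mathrm{rev}$-decorations required to align \cref{prop:duoidalaction}, \cref{l:1} and \cref{cor:5}. If one prefers to avoid invoking \cref{cor:5} as a black box, the same conclusion follows by applying \cref{prop:4} directly: part~\eqref{item:5} builds the $\Comon_\star(\mathcal{V})$-category on $\Mon_\star(\mathcal{V})$, part~\eqref{item:6} gives cotensors $[c,b]^\diamond$ from right $\diamond$-closedness of $\Comon_\star(\mathcal{V})$, and part~\eqref{item:7} gives tensors from left $\diamond$-closedness together with the left adjoint $c\triangleright^\diamond\mi$ of \cref{thm:measuduoidal}; \cref{cor:5} is merely the statement that these fit together into one tensored-and-cotensored $\Comon_\star(\mathcal{V})$-category.
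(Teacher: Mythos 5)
Your proposal is correct and follows essentially the same route as the paper: both reduce the theorem to \cref{cor:5} applied to the action of \cref{prop:duoidalaction} with the parametrised adjoints supplied by \cref{thm:measuduoidal}, and both isolate biclosedness of the lifted $\diamond$-structure on $\Comon_\star(\mathcal{V})$ as the one point needing justification. The only divergence is that where the paper cites an external reference for that biclosedness, you give a correct self-contained argument (creation of colimits by the forgetful functor from $\star$-comonoids, hence cocontinuity of the lifted $\diamond$ in each variable, plus the adjoint functor theorem for locally presentable categories).
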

\begin{proof}
In order to speak of tensors and cotensors, this lifted $\diamond$-monoidal structure on
$\Comon_\star(\mathcal{V})$ should be biclosed. This is essentially showed
in~\cite[\S 3.2]{MonComonBimon} using the locally presentability and
accessibility hypotheses (although the reference uses finitary functors);
see also the comments after~\cite[Prop.~2.9]{Measuringcomonoid}.
We can now apply Corollary~\ref{cor:5} to the action \cref{eq:diamondhom} of
$\Comon_\star(\mathcal{V})^{\mathrm{op,rev}}$
with parametrised adjoints $P$ and
$\triangleright^\diamond$ as in \cref{eq:duoidalhomPtriangle}.
\end{proof}
The enriched category of the theorem has enriched homs $P(a,b)$, for
$\star$-monoids $a$, $b$. Given a $\star$-comonoid $c$ and a $\star$-monoid $a$,
their tensor product is $c\triangleright^\diamond a$, while their cotensor
product is $[c,a]^\diamond$. The functors $P$ and $\triangleright^\diamond$ which generalize \cref{eq:monoidalsweedler}
in the duoidal setting, can also be called Sweedler hom and Sweedler product.
 
\begin{rmk}
  \label{rmk:1}
  The functor $\triangleright^\diamond$ has the structure of a closed action of
  $\Comon_\star(\mathcal{V})$ on $\Mon_\star(\mathcal{V})$, induced from the
  action structure of $[\mi,\mi]^\diamond$. As noted in Corollary~\ref{cor:5},
  the enriched category associated to $\triangleright^\diamond$ is equal to the
  enriched associated to $[\mi,\mi]^\diamond$. This means that there really is
  only one possible choice of $\Comon_\star(\mathcal{V})$-category in
  Theorem~\ref{thm:sweedlerduoidal}.
\end{rmk}

\section{Graded objects and species}
\label{sec:grad-objects-spec}

In this section we brifely review the most common monoidal structures on the
categories of $\mathbb{N}$-graded objects and species. We shall denote by
$\mathbb{N}$ and $\mathbb{P}$ the subcategories of the category of sets whose
objects are the finite cardinals and with the following morphisms: only identity
functions in the case of $\mathbb{N}$, and permutations in the case of
$\mathbb{P}$. Thus, $\mathbb{N}$ is discrete category of natural numbers and
$\mathbb{P}$ can be regarded as the disjoint union of the permutation groups
$P_n$ (if one sets $P_0=1$).

The categories $\Grd(\mathcal{V})=[\mathbb{N},\mathcal{V}]$ is usually called
the category of \emph{$\mathbb{N}$-graded objects} in $\mathcal{V}$, while
$\Sp(\mathcal{V})=[\mathbb{P},\mathcal{V}]$ is called the category of
\emph{species} in $\mathcal{V}$, or the category of \emph{symmetric collections}
in $\mathcal{V}$ as in \cite{KellyonMay}.
There is another presentation of $\Sp(\mathcal{V})$ that replaces $\mathbb{P}$
by the equivalent category of finite sets. This is the preferred approach in
\cite{FoncteursAnalytiques} and \cite{Species}.

The interest on $\Grd(\mathcal{V})$ and $\Sp(\mathcal{V})$ stems from their
power to express
combinatorial structures, thanks to the existence of a \emph{substitution}
monoidal structure on each one of them (if $\mathcal{V}$ has the sufficient
extra structure). There are several ways of describing these structures, ranging
from the very explicit, low technology, definitions to the very sophisticated
involving 2-monads and Kleisli bicategories. We will have more to say about
substitution in Section~\ref{sec:subst-mono-prod}.

In the section that follows, we explore four different monoidal structures on the categories of graded objects and species in $\ca{V}$: the pointwise or Hadamard $\ot$, the Cauchy $\bullet$, the $*$ and the substitution $\circ$. The former three are all examples of Day convolution \cref{eq:dayconv} in the specific context, see also \cite[\S~2.3--5]{BelgianPaper}.

In some combinations, certain products in fact form duoidal structures, some of them resulting from \cref{prop:duoidalpresheaves}. Therefore, the theory described in \cref{sec:SweedlerTheoryduoid} applies to provide a number of enrichments of monoids in comonoids with respect to different monoidal products. Below, we provide a summary of the main results of this chapter.

\begin{thm}\label{thm:allduoidalstructures}
 Suppose $\ca{V}$ is a symmetric monoidal category with coproducts which are preserved by $\ot$ in both variables.
 \begin{enumerate}
  \item $(\VGrd,\bullet,\ot)$ and $(\VSp,\bullet,\ot)$ are duoidal categories (\cref{prop:duoidVGrd1,prop:duoidVSym1});
  \item $(\VGrd,\ot,\bullet)$ and $(\VSp,\ot,\bullet)$ are duoidal categories, if $\ca{V}$ has moreover finite biproducts  (\cref{prop:duoidVGrd1',prop:duoidVSym1});
  \item $(\VGrd,\circ,\ot)$ is a duoidal category (\cref{prop:duoidVGrd2}), and so is $(\VSp,\circ,\ot)$ if $\ca{V}$ has moreover all colimits and are preserved by $\ot$ (\cref{prop:duoidVSym2});
  \item $(\VGrd_+,\ot,\circ)$ and $(\VSp_+,\ot,\circ)$ are duoidal categories, if $\ca{V}$ has moreover finite biproducts (\cref{prop:duoidVGrd+});
 \end{enumerate}
\end{thm}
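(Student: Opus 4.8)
The statement collects four results, each of which is proved in detail by the proposition cited after it; the plan is to organise the argument around the two structural patterns that distinguish the cases according to which tensor product is the outer, pseudomonoid one $\diamond$ and which is the inner, lax one $\star$.

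\emph{Hadamard as the inner tensor} (items (1) and (3), where $\star=\ot$). Here one exploits that $\ot$ is pointwise whereas $\bullet$ and $\circ$ are coproducts of iterated $\ot$'s along explicit index sets. Item (1) follows from Proposition~\ref{prop:duoidalpresheaves} applied with $\mathcal{A}=(\mathbb{N},+)$, resp.\ $\mathcal{A}=(\mathbb{P},+)$ --- both small and symmetric monoidal --- and $\mathcal{B}=(\mathcal{V},\ot)$: then the convolution $\conv$ is exactly the Cauchy product $\bullet$ and the pointwise structure $\square$ is the Hadamard product $\ot$, so $[\mathcal{A},\mathcal{V}]$ is duoidal. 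The only caveat is that $\mathcal{V}$ is assumed merely to have coproducts preserved by $\ot$, not to be cocomplete; one therefore invokes the form of Proposition~\ref{prop:duoidalpresheaves} established in Appendix~\ref{sec:duoid-struct-categ}, together with the observation that for $\mathcal{A}=\mathbb{N}$ or $\mathbb{P}$ the coends in \eqref{eq:dayconv} reduce to coproducts (the inductions along $P_p\times P_q\le P_n$ are finite coproducts). For item (3), $\circ$ is not a Day convolution, so I would construct the interchange $(a\ot b)\circ(c\ot d)\to(a\circ c)\ot(b\circ d)$ by hand: a summand of the left side, indexed by $k$ and $(n_1,\dots,n_k)$, is $(a(k)\ot b(k))\ot\bigotimes_i\bigl(c(n_i)\ot d(n_i)\bigr)$; reassemble it by the symmetry of $\mathcal{V}$ into $\bigl(a(k)\ot\bigotimes_i c(n_i)\bigr)\ot\bigl(b(k)\ot\bigotimes_i d(n_i)\bigr)$ and map this into $(a\circ c)(n)\ot(b\circ d)(n)$ via the tensor of the two coproduct inclusions --- legitimate because $\ot$ preserves coproducts. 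The unit comparison and the duoidal coherence axioms then follow from the universal property of the defining colimits. For species, $\circ$ also involves $P_k$-coinvariants, which is why the $\mathbb{P}$-case requires all colimits (preserved by $\ot$).

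\emph{Hadamard as the outer tensor} (items (2) and (4), where $\diamond=\ot$). Now the interchange $\zeta\colon(a\star b)\ot(c\star d)\to(a\ot c)\star(b\ot d)$ must, in each degree, send a tensor product of two coproducts into a single coproduct. On the diagonal pairs of summands one reassembles by the symmetry of $\mathcal{V}$ followed by a coproduct inclusion, exactly as above; but on the off-diagonal pairs there is no natural morphism in $\mathcal{V}$ unless $\mathcal{V}$ has a zero object --- which is precisely why these items assume finite biproducts (a zero object is moreover needed to define the positive subcategories $\VGrd_+$ and $\VSp_+$ in item (4)). Granting this, $\zeta$ is the copairing whose diagonal components are the reassembly maps and whose off-diagonal components are zero, and the duoidal axioms are verified componentwise. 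In item (4) the restriction to $\VGrd_+$, $\VSp_+$ is what keeps $\circ$ well-defined there, only finitely many summands contributing to each degree.

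The step I expect to be the real work is the verification of the duoidal coherence axioms for the hand-built interchanges of items (2), (3) and (4): compatibility of $\zeta$ with the two associativity and the two unit constraints. The associators of $\bullet$ and $\circ$ reindex the coproduct summands (reassociating sums of integers, resp.\ recombining the composed index data), and one must check that $\zeta$ intertwines these reindexings with the pointwise reassociation of $\ot$ --- combinatorially involved but conceptually routine; the efficient route is to reduce every identity to an equality of colimit cocones and appeal to universality together with the coherence of the symmetry of $\mathcal{V}$. A secondary point of care is confirming that the Cauchy and substitution products, and their internal homs where used, exist and are suitably cocontinuous under exactly the cocompleteness hypothesis listed in each item, and that the two presentations of $\Sp(\mathcal{V})$ yield the same $\circ$.
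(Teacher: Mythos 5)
Your proposal is correct and follows essentially the same route as the paper: item (1) is deduced from the Day-convolution construction of \cref{prop:duoidalpresheaves} (with the coends reducing to finite coproducts over shuffles), while the remaining interchange laws are built by hand --- as coproduct inclusions of a smaller indexed sum into a larger one when the Hadamard product is the inner ($\star$) structure, and as the copairing with zero off-diagonal components (hence the biproduct hypothesis) when it is the outer ($\diamond$) one. One small correction on item (4): the substitution product $\circ$ is perfectly well defined on all of $\Grd(\ca{V})$ and $\Sp(\ca{V})$ under the standing coproduct hypotheses, and even the interchange surjection exists in the non-positive case; the restriction to positive objects is needed specifically so that the structure map $\delta_{\I}\colon\I\to\I\circ\I$ exists, since in the non-positive case its codomain is an infinite coproduct in each degree and cannot be identified with a product so as to receive the diagonal (see \cref{eq:deltafail} and \cref{rmk:positivegraded}). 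Your ``unit constraints via diagonals into biproducts'' step would surface exactly this obstruction once carried out.
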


\begin{thm}\label{thm:allenrichments}
Suppose $\ca{V}$ is a symmetric monoidal closed and locally presentable category.
\begin{enumerate}
  \item $\Grd(\Mon(\ca{V}))$ is tensored and cotensored enriched in $(\Grd(\Comon(\ca{V})),\ot,\I)$, as well as $\Sp(\Mon(\ca{V}))$ in $(\Sp(\Comon(\ca{V})),\ot,\I)$ (\cref{thm:HadGrdEnr});
  \item $\nc{gMon}(\ca{V})$ is tensored and cotensored enriched in $(\nc{gComon}(\ca{V}),\bullet,\1)$ (\cref{thm:CauchyGrdEnr});
  \item $\nc{TwMon}(\ca{V})$ is tensored and cotensored enriched in $(\nc{TwComon}(\ca{V}),\bullet,\1)$ (\cref{thm:CauchySymEnr});
  \vspace{.1in}\hrule\vspace{.1in}
  \item $\Grd(\Mon(\ca{V}))$ is tensored and cotensored enriched in $(\Grd(\Comon(\ca{V})),\bullet,\1)$ (\cref{thm:CauchyHadGrdEnr});
  \item $\nc{gMon}(\ca{V})$ is tensored and cotensored enriched in $(\nc{gComon}(\ca{V}),\ot,\I)$, if $\ca{V}$ has moreover finite biproducts (\cref{thm:HadCauchyGrdEnr});
  \item $\Sp(\Mon(\ca{V}))$ is tensored and cotensored enriched in $(\Sp(\Comon(\ca{V})),\bullet,\1)$ (\cref{thm:CauchyHadSymEnr});
   \item $\nc{TwMon}(\ca{V})$ is tensored and cotensored enriched in $(\nc{TwComon}(\ca{V}),\ot,\I)$, if $\ca{V}$ has moreover finite biproducts (\cref{thm:CauchyHadSymEnr});
  \item $\nc{(s)Opd}_+(\ca{V})$ is tensored and cotensored enriched in $(\nc{(s)Coopd}_+(\ca{V}),\ot,\I)$, if $\ca{V}$ has moreover finite biproducts (\cref{thm:HadSubGrdEnr});
  \item $\nc{sOpd}(\ca{V})$ is tensored and cotensored enriched in $(\nc{sCoopd}(\ca{V}),*,\J)$, if $\ca{V}$ is cartesian monoidal (\cref{thm:starSubSymEnr}).
 \end{enumerate}
\end{thm}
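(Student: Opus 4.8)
The plan is to obtain every clause as an instance of a general enrichment theorem already established, by feeding it the appropriate monoidal or duoidal data; the only genuine work is then the verification of hypotheses together with the translation of the abstract (co)monoid categories into the named ones, the latter being carried out case by case in the remainder of this section.

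For the three clauses above the horizontal rule I would apply the classical Sweedler enrichment of \cref{thm:STmoncats} to the symmetric monoidal closed categories $(\Grd(\ca{V}),\ot,\I)$, $(\Grd(\ca{V}),\bullet,\1)$ and $(\Sp(\ca{V}),\bullet,\1)$. Each of these is locally presentable (a functor category out of a small category into a locally presentable one), symmetric (inheriting the symmetry of $\ca{V}$ and of $(\mathbb{N},+)$ or $(\mathbb{P},+)$), and monoidal closed: the pointwise structure is closed pointwise, whereas the Cauchy products are Day convolutions and hence closed by completeness and closedness of $\ca{V}$, via \cref{eq:dayhom}. It then remains to identify the (co)monoids: pointwise (co)monoids in $\Grd(\ca{V})$ are precisely the objects of $\Grd(\Mon(\ca{V}))$ and $\Grd(\Comon(\ca{V}))$, and Cauchy (co)monoids are by definition the graded, respectively twisted, (co)monoids. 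Reading off \cref{thm:STmoncats} then gives the stated tensored and cotensored enrichments, with the enriching monoidal structure on comonoids being the one carried over from $\ca{V}$.

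For the clauses below the rule I would instead invoke \cref{thm:sweedlerduoidal}, once for each relevant duoidal structure: $(\diamond,\star)=(\bullet,\ot)$ on $\Grd(\ca{V})$ and on $\Sp(\ca{V})$ for (4) and (6); $(\diamond,\star)=(\ot,\bullet)$ on the same categories for (5) and (7); $(\diamond,\star)=(\ot,\circ)$ on the positive variants $\Grd_+(\ca{V})$, $\Sp_+(\ca{V})$ for (8); and $(\diamond,\star)=(*,\circ)$ on $\Sp(\ca{V})$ for (9). The first four families of duoidal structures are provided by \cref{thm:allduoidalstructures}, and the last by the convolution--substitution duoidal structure on species established below. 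In each case the three hypotheses of \cref{thm:sweedlerduoidal} must be checked: local presentability of the ambient category (automatic, as above); accessibility of $\star$ (immediate for the pointwise $\ot$, clear for the Day convolutions $\bullet$ and $*$ since these are assembled from colimits and the accessible $\ot$ of $\ca{V}$, and, for $\star=\circ$, holding because the substitution sums are finite --- automatically so on species, and on graded objects by virtue of the positivity hypothesis); and biclosedness of $(\diamond,i)$ (inherited from biclosedness of $\ca{V}$ for the pointwise $\ot$, and read off from the Day convolution internal-hom formula \cref{eq:dayhom} for $\bullet$ and for $*$, the latter using that $\ca{V}$ is cartesian closed). \cref{thm:sweedlerduoidal} then yields a tensored and cotensored $\Comon_\star$-category over $\Mon_\star$, with $\Comon_\star$ carrying the lifted $\diamond$-monoidal structure; unwinding $\Mon_\ot(\Grd\ca{V})\cong\Grd(\Mon\ca{V})$, $\Mon_\bullet(\Grd\ca{V})\cong\nc{gMon}(\ca{V})$, $\Mon_\circ(\Sp\ca{V})\cong\nc{sOpd}(\ca{V})$ and their species and comonoid counterparts produces the enrichments as listed.

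The routine part is the bookkeeping of which duoidal structure feeds which clause together with these (co)monoid identifications. I expect the main obstacle to be the two hypothesis checks that do not reduce to generic presheaf considerations: the accessibility (indeed, well-definedness) of the substitution product, which is exactly what forces the positivity assumption on graded objects in clause (8), and the biclosedness of the convolution product $*$ on species required in clause (9), where the cartesian monoidal hypothesis on $\ca{V}$ is used; both will be dealt with in the detailed analysis of the corresponding duoidal structures.
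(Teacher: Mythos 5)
Your proposal follows essentially the same route as the paper: clauses (1)--(3) are instances of the classical Sweedler enrichment (\cref{thm:STmoncats}) applied to the pointwise and Cauchy symmetric monoidal closed structures on $\Grd(\ca{V})$ and $\Sp(\ca{V})$, while clauses (4)--(9) are instances of \cref{thm:sweedlerduoidal} applied to the duoidal structures of \cref{thm:allduoidalstructures} (and, for (9), to the convolution--substitution duoidal structure of \cite{Commutativity}), and your assignment of $(\diamond,\star)$ pairs to clauses is the correct one.

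One justification in your sketch is misattributed and would trip you up in the detailed verification of clause (8). You claim positivity is forced by ``the accessibility (indeed, well-definedness) of the substitution product'' and that finiteness of the substitution sums is ``automatic on species.'' Neither is right: the substitution product is perfectly well defined on arbitrary graded objects and species, and it is accessible there without any positivity hypothesis ($\mi\circ V$ has a right adjoint, and $V\circ\mi$ is a colimit of the accessible functors $W\mapsto V_m\ot W^{\bullet m}$ -- this is exactly how the paper verifies accessibility, and it also notes the verification does not need positivity). What actually fails without positivity is the \emph{duoidal structure} $(\ot,\I,\circ,\X)$ itself: the required comonoid structure $\delta_{\I}\colon\I\to\I\circ\I$ must land in $\sum_{m\ge 0}\sum_{n_1+\cdots+n_m=n}I$, an infinite coproduct that cannot be identified with a product via biproducts, so there is no diagonal. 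Positivity bounds $m\le n$ and makes this sum finite -- and this is needed for species just as much as for graded objects, which is why clause (8) is stated for $\nc{(s)Opd}_+$ and the relevant lemma is proved on $\Grd_+(\ca{V})$ and $\Sp_+(\ca{V})$. With that correction, the rest of your hypothesis-checking (local presentability, accessibility of $\star$, biclosedness of $\diamond$) and the identifications of the (co)monoid categories match the paper's argument.
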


\subsection{Pointwise or Hadamard tensor product}
\label{sec:pointw-or-hadam}

Before discussing the convolution tensor produts on the categories of graded
objects and species, we recall that these categories, as any functor category, have a monoidal structure defined pointwise from the
structure of $\mathcal{V}$: for two species or graded objects,
\begin{equation}\label{eq:Had}
 (V\otimes W)_n:= V_n\otimes W_n
\end{equation}
The unit the constant collection on $I$, $\B{I}=\{I\}_{n\in\mathbb{N}}$, and $(\VGrd,\otimes,\I)$ and $(\VSym,\otimes,\B{I})$ are braided when $\ca{V}$ is, and monoidal closed when $\ca{V}$ is via $[V,W]^\ot(n)=[V_n,W_n]$. 

For any category $\ca{J}$, there are isomorphisms between
$\Mon[\mathcal{J},\mathcal{V}]$ and $[\mathcal{J},\Mon(\mathcal{V})]$, and between
$\Comon[\mathcal{J},\mathcal{V}]$ and $[\mathcal{J},\Comon(\mathcal{V})]$. These
express the fact that a (co)monoid in $[\mathcal{J},\mathcal{V}]$ is a functor
$J\colon\mathcal{J}\to \mathcal{V}$ with (co)monoid structures on each object
$J(j)\in\mathcal{V}$, and such that each morphism $J(f)$ is a morphism of
(co)monoids.
For each object $j\in \mathcal{J}$ the ``evaluation at $j$'' functor $E_j\colon
[\mathcal{J},\mathcal{V}]\to\mathcal{V}$ is strict monoidal

The enrichment of $\Grd(\Mon(\mathcal{V}))$ and $\Sp(\Mon(\mathcal{V}))$ in
$\Grd(\Comon(\mathcal{V}))$ and $\Sp(\Comon(\mathcal{V}))$, repespectively,
provided by Theorem~\ref{thm:STmoncats} are related to the enrichments of
$\Mon(\mathcal{V})$ in $\Comon(\mathcal{V})$, via the evaluation functors
\begin{equation}
  \label{eq:17}
  E_n\colon\Grd(\mathcal{V})\to\mathcal{V}\leftarrow\Sp(\mathcal{V})\colon E_n
\end{equation}
as given by the following result.

\begin{cor}\label{thm:HadGrdEnr}\label{thm:HadSymEnr}
  Suppose $\ca{V}$ is a locally presentable, braided monoidal closed
  category. The category $\Grd(\Mon(\ca{V}))$ of graded objects in
  $\Mon(\ca{V})$ is tensored and cotensored enriched in the monoidal
  biclosed category $\Grd(\Comon(\ca{V}))$ of graded objects in $\Comon(\ca{V})$
  with the pointwise tensor product, and so is the category $\Sp(\Mon(\ca{V}))$
  of species in $\Mon(\ca{V})$ in $\Sp(\Comon(\ca{V}))$ of species in
  $\Comon(\ca{V})$.  Furthermore, for each finite cardinal $n$, the evaluation
  functors~\eqref{eq:17} induce full and faithful $\Comon(\mathcal{V})$-functors
  \begin{equation}
    \label{eq:18}
    \Grd(\Mon(\mathcal{V}))\to\Mon(\mathcal{V})\leftarrow \Sp(\Mon(\mathcal{V}))
  \end{equation}
\end{cor}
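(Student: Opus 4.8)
The plan is to deduce this corollary from the general machinery already assembled, specialising Theorem~\ref{thm:STmoncats} to the functor categories $\Grd(\ca{V})=[\mathbb{N},\ca{V}]$ and $\Sp(\ca{V})=[\mathbb{P},\ca{V}]$ with the pointwise (Hadamard) tensor product. First I would observe that when $\ca{V}$ is braided monoidal closed, so is $[\ca{J},\ca{V}]$ with the pointwise structure and internal hom $[V,W]^\ot(n)=[V_n,W_n]$; and when $\ca{V}$ is locally presentable, so is $[\ca{J},\ca{V}]$ for small $\ca{J}$, with the pointwise tensor product accessible because colimits and the tensor of $\ca{V}$ are computed pointwise. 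Thus Theorem~\ref{thm:STmoncats} applies directly to $\ca{V}'=[\ca{J},\ca{V}]$, yielding that $\Mon_\ot([\ca{J},\ca{V}])$ is tensored and cotensored enriched in $\Comon_\ot([\ca{J},\ca{V}])$. The second ingredient is the canonical isomorphisms $\Mon[\ca{J},\ca{V}]\cong[\ca{J},\Mon(\ca{V})]=\Grd(\Mon(\ca{V}))$ (resp.\ $\Sp(\Mon(\ca{V}))$) and the corresponding one for comonoids, already recalled in the paragraph preceding the statement; transporting the enrichment along these isomorphisms gives exactly the asserted $\Grd(\Comon(\ca{V}))$-enrichment (resp.\ $\Sp(\Comon(\ca{V}))$-enrichment) of $\Grd(\Mon(\ca{V}))$ (resp.\ $\Sp(\Mon(\ca{V}))$), with the biclosedness of the base coming from Proposition~\ref{prop:MonComonproperties}.

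For the enriched functors induced by $E_n$, I would argue as follows. Each evaluation $E_n\colon[\ca{J},\ca{V}]\to\ca{V}$ is strict monoidal for the pointwise structure, hence it induces functors $\Mon_\ot[\ca{J},\ca{V}]\to\Mon(\ca{V})$ and $\Comon_\ot[\ca{J},\ca{V}]\to\Comon(\ca{V})$, the latter being strict monoidal for the pointwise $\diamond$-structure on comonoids. A strict monoidal functor $\Comon(\ca{J},\ca{V})\to\Comon(\ca{V})$ acts as a change of base, turning the enriched category $\Grd(\Mon(\ca{V}))$ into a $\Comon(\ca{V})$-category; I would then show that the identity on objects together with the morphisms $P^{[\ca{J},\ca{V}]}(a,b)_n\to P^{\ca{V}}(a_n,b_n)$ assemble into a $\Comon(\ca{V})$-functor. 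The cleanest way to produce these structure maps is via the universal property: the evaluation of the universal measuring comonoid at $n$ measures $a_n$ to $b_n$ (apply $E_n$ to the universal measuring morphism, using that $E_n$ preserves $\diamond$ and carries comonoids to comonoids), hence factors uniquely through $P^\ca{V}(a_n,b_n)$. Compatibility with composition and units is then forced by the universal property, so naturality and functoriality come essentially for free.

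Full faithfulness of~\eqref{eq:18} is the one point that needs a genuine argument rather than formal nonsense. A $\ca{V}$-functor is fully faithful when its hom-maps are isomorphisms; here that means showing $P^{[\ca{J},\ca{V}]}(a,b)_n\to P^\ca{V}(a_n,b_n)$ is an isomorphism of comonoids in $\ca{V}$ for each $n$. I expect this to be the main obstacle, and the route I would take is to check that the Sweedler hom in a pointwise functor category is computed pointwise, i.e.\ $P^{[\ca{J},\ca{V}]}(a,b)(n)\cong P^\ca{V}(a(n),b(n))$ naturally. This should follow by comparing representable functors: a comonoid morphism into $P^{[\ca{J},\ca{V}]}(a,b)$ is a measuring morphism $c\diamond a\to b$ in $[\ca{J},\ca{V}]$, which by the pointwise nature of $\diamond$, of the (co)monoid structures, and of the defining diagrams~\eqref{eq:8}, is the same as a family of measuring morphisms $c_n\diamond a_n\to b_n$ in $\ca{V}$ — but one must be slightly careful, since a comonoid in $[\ca{J},\ca{V}]$ is a \emph{functor} into $\Comon(\ca{V})$, not merely a pointwise family, so naturality in $\ca{J}$ has to be tracked through the Yoneda argument. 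Granting this pointwise formula, the comparison map $P^{[\ca{J},\ca{V}]}(a,b)_n\to P^\ca{V}(a_n,b_n)$ is the identity, full faithfulness is immediate, and the corollary follows. (For $\ca{J}=\mathbb{P}$ one additionally notes that the permutation actions are carried along, but this is automatic from naturality.)
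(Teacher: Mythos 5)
Your overall strategy coincides with the paper's: apply Theorem~\ref{thm:STmoncats} to the functor category $[\mathcal{J},\mathcal{V}]$ with the pointwise structure (the accessibility argument via pointwise colimits is the same as the paper's appeal to the jointly conservative evaluations), identify $\Mon[\mathcal{J},\mathcal{V}]$ with $[\mathcal{J},\Mon(\mathcal{V})]$, and then exhibit each $E_n$ as a fully faithful enriched functor by showing the Sweedler hom is computed pointwise. Where you genuinely diverge is in how you prove the key isomorphism $E_n\mathsf{M}(A,B)\cong\mathsf{A}(A(n),B(n))$. You argue directly from the universal property of measuring comonoids: a measuring $c\diamond a\to b$ in $[\mathcal{J},\mathcal{V}]$ unpacks to a (natural) family of measurings $c_n\diamond a_n\to b_n$, and Yoneda does the rest. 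The paper (Theorem~\ref{thm:2} in the appendix) instead observes that $E_n\colon\Comon[\mathcal{J},\mathcal{V}]\to\Comon(\mathcal{V})$ and $E_n^{\mathrm{op}}$ admit explicit left adjoints $D_n$ and $L_n$ (extension by the initial comonoid, resp.\ the terminal monoid), checks the easy isomorphism $L_n[c,B(n)]\cong[D_n(c),B]$ between the \emph{left} adjoints, and concludes by uniqueness of right adjoints; this also hands them, essentially for free, the compatibility with units and counits needed to verify enriched functoriality, which they then reduce to $E_n$ being strict monoidal and strict closed. Your route is more concrete but carries the bookkeeping cost you yourself flag: for $\mathcal{J}=\mathbb{P}$ one must track that the equivariance of $c$, $a$, $b$ matches the $P_n$-action that functoriality of $P$ induces on $\{P(a_n,b_n)\}_n$, and your assertion that compatibility with composition and identities is ``forced by the universal property'' still requires an actual diagram chase (the paper spends two displayed diagrams on exactly this). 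Neither point is a gap in principle, but if you write this up you should either supply that naturality and functoriality verification or switch to the adjoint-transfer argument, which packages it automatically.
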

Even though the proof of this corollary will be given in greater generality in
Appendix~\ref{sec:pointw-mono-struct}, we shall say a few explanatory words about
the statement. For simplicity, we shall speak of the case of species, as the
case of graded objects is identical. Let us denote by $\mathsf{M}$ the
$\Sp(\Comon(\mathcal{V}))$-category with underlying category
$\Sp(\Mon(\mathcal{V}))$ given by Theorem~\ref{thm:STmoncats}. Similarly, let
$\mathsf{A}$ be the $\Comon(\mathcal{V})$-category whose underlying category is
$\Mon(\mathcal{V})$. The strict monoidal evaluation functor
$E_n\colon\Sp(\Comon(\mathcal{V}))\to\Comon(B)$ sends a species of comonoids
$\{c_i\mid i\in \mathbb{N}\}$ to the comonoid $c_n$. The corollary says that the
$n$th component of the species of comonoids $\mathsf{M}(A,B)$ is just
$\mathsf{A}(A_n,B_n)$, for a pair of species of monoids $A$, $B$.

\subsection{Convolution structures}
\label{sec:conv-struct}

Convolution monoidal structures were briefly described below
Proposition~\ref{prop:duoidalpresheaves} as left Kan extensions. Let's assume
that $\mathcal{V}$ is a monoidal category. For any monoidal structure
on $\mathbb{N}$, the left Kan extensions that define the induced convolution
tensor product and unit object in $\Grd(\mathcal{V})$ exist when
$\mathcal{V}$ has coproducts, since $\mathbb{N}$ is discrete. The associativity
and unit constraints will exist if the tensor product in $\mathcal{V}$ preserves
coproducts.

The finite cardinal addition and product correspond to finite coproducts and
finite cartesian products in the category of sets. This gives rise to a pair of
monoidal structures on each of $\mathbb{N}$ and $\mathbb{P}$, addition and
product.

The convolution monoidal structure $\Grd(\mathcal{V})$ and $\Sp(\mathcal{V})$
induced by addition, known as the \emph{Cauchy} tensor product, is the usual
tensor product of graded objects. We will denote it by the symbol $\bullet$ and
has formulas, for graded objects on the left and species on the right,
\begin{equation}
  \label{eq:CauchySym}
  (V\bullet W)_n\cong
  \sum_{k+m=n}V_k\otimes W_{m}
  \quad
  (A\bullet B)_n
  \cong
  \int^{k,m} \mathbb{P}(k+m,n)\cdot A_k\otimes B_m\cong \sum\limits_{\substack{k+m=n \\ \textrm{Sh}(k,m)}}A_k\otimes B_m
\end{equation}
where the set $\textrm{Sh}(k,m)$ is of all $(k,m)$-shuffles, i.e. ways of forming a totally ordered set with size $m+k$ out of two totally ordered sets of sizes $m$ and $k$.
In both cases the unit object for the Cauchy tensor product is the object
$\mathbf{1}$ with component all components equal to the initial object $0$,
except $\mathsf{1}_0=I$, the unit object of $\mathcal{V}$. For species, see \cite[\nopp 2.2]{KellyonMay} or \cite[\S~ 5.1.4]{AlgebraicOperads} for vector spaces. 

\begin{rmk}
  \label{rmk:4}
  \begin{enumerate}[wide]
  \item The existence of the Cauchy monoidal structure on $\Grd(\mathcal{V})$
    depends only of the existence of countable coproducts in $\mathcal{V}$ and
    the preservation of these by the tensor product in each variable.
  \item The existence of the Cauchy monoidal structure on $\Sp(\mathcal{V})$ is
    guaranteed by the existence of countable coproducts and quotients by group
    actions in $\mathcal{V}$ and their preservation by the tensor product in
    each variable. This strong requirement is usually replaced by the simpler
    hypothesis that $\mathcal{V}$ should be cocomplete and the tensor product
    should be cocontinuous in each variable.
  \end{enumerate}
\end{rmk}

The Cauchy monoidal structure is left (right) closed if $\mathcal{V}$ is
complete and left (right) closed, with internal homs given by particular
instances of the formulas~\eqref{eq:dayhom}. In the case of $\Grd(\mathcal{V})$,
only products are needed and the left internal hom has an expression
$[V,W]^\bullet_{\ell,n}\cong\prod_{i\geq 0}[V_i,W_{i+n}]_\ell$; similarly for
the right internal hom. In the case of $\VSp$, ends are needed and $[A,B]^\bullet_{\ell,n}=\int_{b}[A_b,B_{b+n}]_\ell$.

Braidings for the monoidal category $\mathcal{V}$ induce braidings for
$\Grd({\mathcal{V}})$ and $\Sp(\mathcal{V})$. Indeed, we have already described
how to induce a braiding on a convolution product on $[\mathcal{A},\mathcal{V}]$
from a braiding on $\mathcal{A}$ and another on $\mathcal{V}$.
In the present situation, $\mathcal{A}$ is either $\mathbb{N}$ or $\mathbb{P}$
equipped with the canonical symmetry of the coproduct monoidal structure. In
graded objects, the
resulting brading has components $(V\bullet W)_n\cong (W\bullet V)_n$ that are
just braiding $V_k\bullet W_m\cong W_m\bullet V_k$ in $\mathcal{V}$.  These are
not the only possible bradings. In fact, the most commonly used bradings in the
category of graded vector spaces has components $(v\otimes w)\mapsto q^{km}
w\otimes v$ if $v\in V_k$ and $w\in W_m$, for $q\neq 0$. Most often than not
$q=-1$. See for example~\cite[\S 2.3.1]{Species}.

Monoids and comonoids for the Cauchy tensor product in $\Grd(\mathcal{V})$ are
called \emph{graded monoids} and \emph{graded comonoids}. These are the direct
generalisations to monoidal categories of the graded algebras and graded
coalgebras so common in algebra. A graded monoid, then, consists of
a graded object $V$ with multiplication and unit components
\begin{equation}
  \label{eq:23}
  \mu_{k,m}\colon V_k\otimes V_m\to V_{k+m}\qquad I\to V_0
\end{equation}
satisfying associativity and unit axioms. A comonoid is a graded object $Z$ with a
comultiplication and counit components
\begin{equation}
  \label{eq:gradedcomonoid}
  \delta_{n}\colon Z\to\sum_{k+m=n}Z_k\otimes Z_m\qquad
  \varepsilon_n\colon Z_n\to \mathbf{1}_n=
  \begin{cases}
    I&n=0\\0&n\neq 0
  \end{cases}
\end{equation}
and satisfying coassociativity and counit axioms.
We denote these categories by $\mathsf{gMon}(\ca{V})=\Mon_\bullet(\VGrd)$ and $\mathsf{gComon}(\ca{V})=\Comon_\bullet(\VGrd)$.

\begin{rmk}
  \label{rmk:7}
  In the classical setting, the category $\ca{V}$ has
  finite biproducts, for example vector spaces. Then, the components of the
  comultiplication land on a (finite) product, therefore can be expressed as
  $\delta_{m,k}\colon V_{m+k}\to V_m\ot V_k$ for all $m,k$, and the counit is
  only $V_0\to I$ (since the rest are zero maps).
\end{rmk}

\cref{thm:STmoncats} yields the following enrichment result.
\begin{cor}\label{thm:CauchyGrdEnr}
  Suppose $\ca{V}$ is braided monoidal closed and locally presentable. Then
  the category of graded monoids $\mathsf{gMon}(\ca{V})$ is tensored and
  cotensored enriched in the monoidal biclosed category of graded
  comonoids $(\mathsf{gComon}(\ca{V}),\bullet,\1)$.
\end{cor}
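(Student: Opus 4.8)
The plan is to deduce the statement as a direct instance of \cref{thm:STmoncats}, applied not to $\ca{V}$ itself but to the Cauchy-monoidal category $(\VGrd,\bullet,\1)$; the entire content of the proof is then the verification that this category meets the three hypotheses of \cref{thm:STmoncats}, namely that it is locally presentable, braided and monoidal closed.

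First I would record that $\VGrd=[\mathbb{N},\ca{V}]$ is locally presentable: since $\mathbb{N}$ is small and discrete, $[\mathbb{N},\ca{V}]$ is a product of $\mathbb{N}$-many copies of $\ca{V}$, and a set-indexed product of locally presentable categories is again locally presentable (equivalently, it is a functor category out of a small category into a locally presentable category). In particular $\VGrd$ is complete and cocomplete. For the monoidal structure: the Cauchy product $\bullet$ and its unit $\1$ exist because, $\mathbb{N}$ being discrete, the defining left Kan extensions reduce to the countable coproducts displayed in~\eqref{eq:CauchySym}, and the associativity and unit constraints exist because $\otimes$, being a left adjoint in each variable as $\ca{V}$ is monoidal closed, preserves those coproducts — this is exactly the situation described in the first item of \cref{rmk:4}. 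A braiding on $\bullet$ is induced by the braiding of $\ca{V}$ together with the canonical symmetry of the additive monoidal structure on $\mathbb{N}$, as recalled in the discussion of braided convolution structures above. Finally $\bullet$ is biclosed: $\ca{V}$ is complete and biclosed (a braiding turns left closedness into biclosedness), so the formulas~\eqref{eq:dayhom} specialise to the internal homs $[V,W]^\bullet_{\ell,n}\cong\prod_{i\geq 0}[V_i,W_{i+n}]_\ell$ and their right-handed analogue. Since $\bullet$ is computed pointwise from coproducts of values of $\otimes$, it is cocontinuous in each variable, hence in particular accessible, so by \cref{prop:MonComonproperties} the category $\Comon_\bullet(\VGrd)$ carries a monoidal biclosed structure, which is the $\bullet$ appearing in the statement.

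With these verifications in hand, \cref{thm:STmoncats} applies verbatim to $(\VGrd,\bullet,\1)$ and produces a tensored and cotensored $\Comon_\bullet(\VGrd)$-enrichment of $\Mon_\bullet(\VGrd)$; unwinding the definitions $\mathsf{gMon}(\ca{V})=\Mon_\bullet(\VGrd)$ and $\mathsf{gComon}(\ca{V})=\Comon_\bullet(\VGrd)$ gives precisely the asserted enrichment, with enriched homs the Sweedler comonoids $P(a,b)$, tensors the Sweedler products $c\triangleright a$, and cotensors $[c,a]^\bullet$. There is no essential obstacle here; the one point that genuinely requires care is checking that the Cauchy structure on $\VGrd$ is \emph{closed} and not merely monoidal, since it is this — together with local presentability — that feeds \cref{thm:STmoncats}, so I would make the role of the completeness and closedness of $\ca{V}$, and of the discreteness of $\mathbb{N}$, explicit rather than leave it implicit.
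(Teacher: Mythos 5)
Your proposal is correct and is exactly the argument the paper intends: the corollary is stated as a direct application of \cref{thm:STmoncats} to $(\VGrd,\bullet,\1)$, relying on the facts (recorded earlier in \cref{sec:conv-struct} and \cref{rmk:4}) that $\VGrd$ is locally presentable and that the Cauchy structure is braided and biclosed when $\ca{V}$ is locally presentable and braided monoidal closed. Your explicit verification of these hypotheses, including the accessibility of $\bullet$ feeding into \cref{prop:MonComonproperties}, simply spells out what the paper leaves implicit.
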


If $Z$ is a graded comonoid and $V$ is a graded monoid in $\ca{V}$, then the
multiplication of the graded monoid $[Z,V]^\bullet$ corresponds to the graded
morphism $Z\bullet [Z,V]^\bullet \bullet[Z,V]^\bullet\to V$ with components
\begin{equation}\label{eq:convCauchy}
 \begin{tikzcd}[row sep=.1in,column sep=.2in]
   \left(Z\bullet [Z,V]^\bullet\bullet[Z,V]^\bullet\right)_n
   \ar[d,equal]\ar[dddrr,dashed,bend left] &&
   \\
   \displaystyle\sum_{k+m+\ell=n}
   Z_\ell\otimes \prod_{i}\otimes
   [Z_i,V_{i+k}]\ot
   \prod_{j}[Z_j,V_{j+m}]
   \ar[d,"\delta\otimes 1\ot1\delta"'] &&
   \\
   \displaystyle
   \sum_{p+q=\ell} Z_p\ot Z_q
   \otimes
   \sum_{k+m+\ell=n}
   \prod_{i}
   [Z_i,V_{i+k}]\ot
   \prod_{j}[Z_j,V_{j+m}]\ot
   \ar[d,"1\ot\beta\ot1"' ] &&
   \\
   \displaystyle\sum_{k+m+p+q=n}
   Z_p\otimes
   \prod_{i}[Z_i,V_{i+k}]\ot
   Z_q\otimes \prod_{j}[Z_j,V_{j+m}]
   \ar[r,"\mathrm{ev}\ot\mathrm{ev}"']
   &
   \displaystyle\sum_{k+m+p+q=n}V_{p+k}\ot V_{q+m}\ar[r,"\mu"'] & V_n
\end{tikzcd}
\end{equation}
\begin{rmk}
  The enrichment of Corollary~\ref{thm:CauchyGrdEnr} is closely related to the
  enrichment of differential graded
  algebras in differential graded coalgebras of \cite[Thm.~0.0.1]{AnelJoyal}.
  In this setting, $\mathcal{V}=\Vect$ and for $f$ a $k$-degree graded map,
  $g$ an $m$-degree graded map, $c$ a $(p+q)$-homogeneous element,
  \begin{displaymath}
    (f\cdot g)(c):=f(c^{(1)})g(c^{(2)})\in V_{k+m+p+q}
  \end{displaymath}
  depending on the choice of symmetry; see \cite[\S~3.2]{AnelJoyal}.
\end{rmk}

Monoids and comonoids in $(\VSp,\bullet,\1)$ are of central importance for the
work of \cite{Species}, and are sometimes called `twisted (co)algebras' for
$\ca{V}=\Vect$ introduced in \cite{TwistedAlgs} and appear in various works like
\cite{FoncteursAnalytiques,Twisteddescentalg}.
a \emph{twisted monoid} is a graded monoid \cref{eq:23} with symmetric actions on components, where 
the multiplication maps $\mu_{m,k}$ are $P_m\times P_k$ equivariant.
We denote by $\nc{TwMon}(\ca{V})=\Mon_\bullet(\VSp)$ and
$\nc{TwComon}(\ca{V})=\Comon_\bullet(\VSp)$.

Under the usual assumptions, \cref{thm:STmoncats} once again induces an
enrichment of monoids in comonoids in $\Sp(\ca{V})$ with the Cauchy product.

\begin{cor}\label{thm:CauchySymEnr}
  In a locally presentable, braided monoidal closed category $\ca{V}$,
  $\nc{TwMon}(\ca{V})$ is tensored and cotensored enriched in the
  monoidal biclosed category $(\nc{TwComon}(\ca{V}),\bullet,\1)$.
\end{cor}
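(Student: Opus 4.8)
The plan is to recognize \cref{thm:CauchySymEnr} as a direct instance of the general duoidal enrichment theorem \cref{thm:sweedlerduoidal} (equivalently, as an instance of \cref{thm:STmoncats}, since the Cauchy tensor product $\bullet$ on $\Sp(\ca{V})$ is braided), applied to the monoidal category $(\Sp(\ca{V}),\bullet,\1)$. Concretely, I would verify the three hypotheses of \cref{thm:STmoncats}: that $(\Sp(\ca{V}),\bullet,\1)$ is (i) braided, (ii) monoidal closed, and (iii) locally presentable. For (i): by \cref{rmk:7}'s surrounding discussion (the paragraph on braidings for $\Grd$ and $\Sp$), a symmetry or braiding on $\ca{V}$ together with the canonical symmetry of the coproduct structure on $\mathbb{P}$ induces a braiding on the Cauchy convolution $(\Sp(\ca{V}),\bullet,\1)$; the hypothesis that $\ca{V}$ is braided monoidal closed supplies this. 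For (ii): the Cauchy structure is left/right closed whenever $\ca{V}$ is complete and left/right closed, via the end formula $[A,B]^\bullet_{\ell,n}=\int_b[A_b,B_{b+n}]_\ell$ recalled just after \cref{rmk:4}; since $\ca{V}$ here is (bi)closed and locally presentable (hence complete), both internal homs exist. For (iii): $\Sp(\ca{V})=[\mathbb{P},\ca{V}]$ is a functor category from a small category into a locally presentable $\ca{V}$, hence itself locally presentable; moreover the Cauchy tensor product is cocontinuous in each variable (being a Day convolution, it is a left adjoint), so in particular it is accessible.

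Having checked these, I would invoke \cref{thm:STmoncats} directly: it asserts that for any locally presentable braided monoidal closed category $\ca{W}$, the category $\Mon(\ca{W})$ is tensored and cotensored enriched in $(\Comon(\ca{W}),\ot_{\ca{W}},I_{\ca{W}})$, which is monoidal biclosed by \cref{prop:MonComonproperties}. Taking $\ca{W}=(\Sp(\ca{V}),\bullet,\1)$ and unwinding notation, $\Mon(\ca{W})=\Mon_\bullet(\Sp(\ca{V}))=\nc{TwMon}(\ca{V})$ and $\Comon(\ca{W})=\Comon_\bullet(\Sp(\ca{V}))=\nc{TwComon}(\ca{V})$, with its inherited tensor product being exactly the lift of $\bullet$ to comonoids (as described around \cref{eq:starMon} with $\star=\bullet$) and unit $\1$. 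The biclosedness clause of \cref{prop:MonComonproperties} gives that $(\nc{TwComon}(\ca{V}),\bullet,\1)$ is monoidal biclosed, completing the statement. The enriched homs are the Sweedler homs $P(A,B)$ for twisted monoids $A,B$, the tensor of a twisted comonoid $Z$ with a twisted monoid $A$ is $Z\triangleright A$, and the cotensor is $[Z,A]^\bullet$.

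There is essentially no serious obstacle here; the statement is a corollary, and the work is entirely bookkeeping to confirm that the general machinery applies. The one point that merits a sentence of care is the verification that the braiding on $\ca{V}$ genuinely lifts to a braiding on the Cauchy convolution $(\Sp(\ca{V}),\bullet,\1)$ — this is the convolution-of-braidings construction already recorded in \cref{eq:25} and its surrounding paragraph, specialized to $\mathcal{A}=\mathbb{P}$ with the symmetry of its coproduct monoidal structure — since without it \cref{thm:STmoncats} does not apply. Everything else (local presentability of the functor category, accessibility and cocontinuity of Day convolution, closedness via the end formula) is standard and was already flagged in \cref{rmk:4} and the text immediately following it. So the proof is short: cite \cref{thm:STmoncats} and \cref{prop:MonComonproperties}, having observed that their hypotheses hold for $\ca{W}=(\Sp(\ca{V}),\bullet,\1)$, and translate the conclusion into the $\nc{TwMon}$/$\nc{TwComon}$ notation.
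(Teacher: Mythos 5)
Your proposal is correct and matches the paper's own route: the corollary is obtained exactly by applying \cref{thm:STmoncats} (with \cref{prop:MonComonproperties} for biclosedness of the comonoid category) to the braided monoidal closed, locally presentable category $(\Sp(\ca{V}),\bullet,\1)$, whose hypotheses you verify in the same way the paper does in \cref{sec:conv-struct}. No issues.
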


If $C$ is a twisted comonoid and $A$ is a twisted monoid, $[C,A]^\bullet$
becomes a twisted monoid with multiplication formed very much as
in~\cref{eq:convCauchy}.

Finally, the monoidal structure on $\mathbb{N}$ and $\mathbb{P}$ given by the cartesian
product gives rise to convolution monoidal structures on $\Grd(\mathcal{V})$ and
$\Sp(\mathcal{V})$. All the considerations we have made for the Cauchy tensor
product hold for this new convolution structure, replacing addition by product
when necessary. 

\subsection{Substitution monoidal product}
\label{sec:subst-mono-prod}

Suppose that $(\ca{V},\ot,I)$ is a monoidal category with coproducts preserved by $\otimes$ in both variables.
The $m$-fold Cauchy tensor product is a graded object
\begin{equation}\label{eq:mCauchyGrd}
 V^{\bullet m}_n=\overbrace{(V\bullet V\bullet\ldots\bullet V)_n}^{m\textrm{-times}}=
 \sum_{n_1+\ldots+n_m=n}V_{n_1}\ot\ldots\ot V_{n_m}
\end{equation}
where the sum is over all possible combinations of $m$ natural numbers (possibly zero) whose addition is equal to $n$. The substitution monoidal product for two $\ca{V}$-graded objects is then defined by
\begin{equation}\label{eq:subVGrd}
 (V\circ W)_n:=\sum_{m\geq0}V_m\ot W^{\bullet m}(n)\cong\sum_{\substack{m\geq0 \\ n_1+\ldots+n_m=n}}V_m\ot W_{n_1}\ot\ldots\ot W_{n_m}.
\end{equation}
Notice that even though for each $m$ the second sum is finite, the first sum is over all natural numbers. The unit for this tensor is the graded objet $\X$ with
\begin{equation}\label{eq:unitsub}
\X_n=\begin{cases}
    I,& n=1\\
    0,& n\neq1
   \end{cases}
\end{equation}
which is similar, but different, to the Cauchy monoidal unit $\1$. If $\ca{V}$ is braided, this tensor product on $\VGrd$ does not inherit the braiding: it is clear that the roles of $V$ and $W$ are not interchangeable. Moreover, when $\ca{V}$ is left closed and products exist, $(\VGrd,\circ,\X)$ is left closed via $\mi\circ V\dashv[V,\mi]_\ell^\circ$ where
\begin{equation}\label{eq:subclosedVGrd}
 [V,W]^\circ_{\ell,n}:=\prod_{m\geq0}[V^{\bullet n}_m,W_m]_\ell
\end{equation}
but even if $\ca{V}$ is right closed, $\VGrd$ is not.

\begin{rmk}
The substitution product can be viewed as a composite of the pointwise and Cauchy product as follows. First of all, the mapping $(W,m)\mapsto W^{\bullet m}$ corresponds to a functor $\VN\times\ca{N}\to\VN$, so denote its transpose functor $(\mi)^\bullet\colon\VN\to[\ca{N},\VN].$ 
Moreover, the Hadamard functor is in fact an instance of a class of functors induced by the underlying tensor product in $\ca{V}$, expressed as 
\begin{equation}\label{eq:canonical}
\ot\colon\VN\times\VN\xrightarrow{\textrm{cano}}(\ca{V}\times\ca{V})^{\ca{N}\times\ca{N}}\xrightarrow{\ot_\ca{V}}\ca{V}^{\ca{N}\times\ca{N}}\xrightarrow{1^\Delta}\VN
\end{equation}
by $(\{V_n\},\{W_{n'}\})\mapsto\{(V_n,W_{n'})\}\mapsto\{V_n\ot W_{n'}\}\mapsto\{V_n\ot W_n\}$. Similarly, we can build
$$\wt{\ot}\colon\VN\times[\ca{N},\VN]=(\VN)^1\times(\VN)^\ca{N}\xrightarrow{\textrm{cano}}(\VN\times\VN)^\ca{N}\xrightarrow{\ot}(\VN)^\ca{N}=[\ca{N},\VN]
$$
which performs $(\{V_n\},\{F_{n'}(\mi)\})\mapsto\{V_n\ot F_{n}(\mi)\}$ producing a diagram of shape $\ca{N}$ in $\VN$. Finally, the substitution in $\VGrd$ can be expressed as the composite
\begin{equation}\label{eq:subwithHadCau}
 \begin{tikzcd}[row sep=.3in]
\VN\times\VN\ar[r,dashed,"\circ"]\ar[d,"1\times(\mi)^\bullet"'] & \VN \\
\VN\times[\ca{N},\VN]\ar[r,"{\wt{\ot}}"'] & {}[\ca{N},\VN]\ar[u,"\sum_{m\geq0}"']
 \end{tikzcd} 
\end{equation}
where the last functor essentially takes some $G\colon\ca{N}\to\VN$ to its colimit (since $\ca{N}$ is discrete).
\end{rmk}

Since $(\VGrd,\circ,\X)$ is not braided, it does not satisfy the assuptions of \cref{thm:STmoncats} like the previous monoidal structures. Therefore there is no induced enrichment of its category of monoids in the category of comonoids in that way; notice that the latter is not even a $\circ$-monoidal category. However, in the next section we will establish a different enrichment between these structures, which are of central importance in numerous areas of mathematics and we next discuss.

A monoid in $(\VGrd,\circ,\X)$ is a \emph{non-symmetric}, or \emph{non-}$\Sigma$, or \emph{planar operad}. Explicitly, it is a graded object $\{V_n\}_{n\geq0}$ which comes equipped with multiplication $\mu\colon V\circ V\to V$ and unit $\eta\colon\X\to V$ whose components at each $n\in\ca{N}$ are, for all $m,n_i\geq0$,
\begin{equation}\label{eq:nonsymoperad}
\mu_{m,n_i}\colon V_m\ot V_{n_1}\ot\ldots\ot V_{n_m}\to V_{n_1+\ldots+n_m},\qquad \eta_1\colon I\to V_1
\end{equation}
and are associative and unital.
If we think the components of a non-symmetric operad as expressing $n$-ary operations, substitution can be visualized as in the one-object case \emph{multicategory} composition of \cite[Fig.~2-B]{Leinster:2004a}
\begin{displaymath}
\scalebox{0.7}{
 \begin{tikzpicture}[triangle/.style = {fill=gray!20, regular polygon, regular polygon sides=3}]
\path (0,0) node [triangle,draw,shape border rotate=-90,inner sep=0pt,label=178:$\vdots$] (a) {$\phantom{\theta_n}$}
(0,4) node [triangle,draw,shape border rotate=-90,inner sep=0pt,label=178:$\vdots$] (b) {$\phantom{\theta_n}$}
(3,2) node [triangle,draw,shape border rotate=-90,label=135:$1$,label=230:$m$,label=178:$\vdots$] (c) {$\phantom{\theta_n}$}
(9,2) node [triangle,draw,shape border rotate=-90,inner sep=-25pt,label=178:$\vdots$] (d) {$\phantom{\qquad\theta\circ(\theta_1,\ldots,\theta_n)}$};
\draw [-] (a) .. controls +(right:2cm) and +(left:1cm).. (c.220);
\draw [-] (b) .. controls +(right:2cm) and +(left:1cm).. (c.140);
\draw [-] (d) to  (11.7,2);
\draw [-] (c) to  (4.2,2);
\draw [-] (7.7,3) to node [above] {$1$} (8.2,3);
\draw [-] (7.7,1) to node [below] {$n$} (8.2,1);
\draw [-] (-.85,4.3) to node [above] {$1$} (-.25,4.3);
\draw [-] (-.85,3.6) to node [below] {$n_1$} (-.25,3.6);
\draw [-] (-.85,0.3) to node [above] {$1$} (-.25,0.3);
\draw [-] (-.85,-0.4) to node [below] {$n_m$} (-.25,-0.4);
\node () at (5.5,2) {$\mapsto$};
\end{tikzpicture}}
\end{displaymath}
For examples and more details on these structures, see e.g. \cite[\S~B.7]{Species}, \cite[\S~1.1]{FresseHomotopy} or \cite[\S~5.8]{AlgebraicOperads} where the equivalent classical, partial and combinatorial definitions are provided.

Comonoids in $(\VGrd,\circ,\X)$ is what we call \emph{non-symmetric cooperads}, namely graded objects $\{Z_n\}_{n\geq0}$ equipped with
\begin{equation}\label{eq:nonsymcooperads}
\delta_n\colon Z_n\to\sum_{\substack{m\geq0 \\ n_1+\ldots+n_m=n}}Z_m \ot Z_{n_1}\ot\ldots\ot Z_{n_k},\qquad \epsilon_n=
\begin{cases}
 Z_1\to I, & n=1 \\
Z_n\to0, & n\neq1
\end{cases}
\end{equation}
which are coassociative and counital. Denote the respective categories by $\Mon_\circ(\VGrd)=\VOpd$ and $\Comon_\circ(\VGrd)=\VCoopd$.
In various references the term `cooperad' refers to different things: in \cite[\S~B.7.1]{Species}, non-symmetric operads in the setting of graded vector spaces are defined to be comonoids with respect to a \emph{lax} monoidal structure on $\VGrd$ namely 
\begin{equation}\label{eq:circ'}
(V\circ'W)_n=\prod_{m\geq0}V_m\ot\sum_{n_1+\ldots+n_m=n}W_{n_1}\ot\ldots\ot W_{n_m}
\end{equation}
Due to existence of finite biproducts in $\Vect$, and since the sum is over a finite number of combinations for each fixed $m$, the comultiplication maps $W\to W\circ' W$ can be written as the duals of \cref{eq:nonsymoperad}. However, contrary to graded comonoids \cref{eq:gradedcomonoid} for which the two conventions agreed in the context of vector spaces, these two notions of cooperads are in general distinct: the product over all natural numbers is infinite thus does not coincide with a sum. Still, every $\circ$-comonoid gives rise to a $\circ'$-comonoid in vector spaces (or more generally in a category enriched in pointed sets) via 
\begin{displaymath}
 W_n\xrightarrow{\delta_n}\sum_{m\geq0}W_m \ot\sum_{n_1+\ldots+n_m=n} W_{n_1}\ot\ldots\ot W_{n_k}\hookrightarrow
 \prod_{m\geq0}W_m \ot\sum_{n_1+\ldots+n_m=n} W_{n_1}\ot\ldots\ot W_{n_k}
\end{displaymath}
This also relates to \cite[Rem.~4.25]{VCocats} where any $\ca{V}$-\emph{cocategory} gives rise to a $\ca{V}$-\emph{opcategory}, the former being a comonad in the bicategory of $\ca{V}$-matrices and the latter a $\ca{V}^\op$-category.

\begin{rmk}\label{rmk:positivegraded}
In the full subcategory of \emph{positive} graded objects $\Grd_+(\ca{V})\subset\VGrd$ of $\mathbb{N}$-families $\{V_n\}_{n>0}$ whose $0$-component is initial $V_0=0$, or equivalently families $\{V_n\}_{n\geq1}$, the two notions of cooperads match. In that case, $m$ as well as all $n_i$'s in both formulas \cref{eq:subVGrd,eq:circ'} are non-zero, which forces the number $m$ of possible decompositions of $n$ to actually be bounded $m\leq n$. Therefore, when finite biproducts exist in $\ca{V}$, the two monoidal products coincide $\circ=\circ'$ and consequently, the two notions of non-symmetric positive cooperads do too. 

Positive graded objects as well as non-symmetric positive operads and cooperads are sometimes studied on their own right, e.g. \cite[Def.~1.14]{Operadsinalgtopphy} in arbitrary monoidal categories or \cite[\S~3.1]{WIT} in vertical bicomplexes.
\end{rmk}

In the world of species, in a symmetric monoidal category where $\ot$ preserves colimits, the $m$-th Cauchy tensor product is computed to be
\begin{equation}\label{eq:mCauchySym}
 A^{\bullet m}=\int^{n_1,\ldots,n_m}\ca{P}(n_1+\ldots+n_m,-)*A_{n_1}\otimes\ldots \otimes A_{n_m}
\end{equation}
with explicit components, using \cref{eq:CauchySym} repeatedly,
\begin{align}
A^{\bullet m}_n=&\sum_{n_1=0}^n\sum_{\mathrm{Sh}(n_1,n\mi n_1)}A_{n_1}\ot\sum_{n_2=0}^{n\mi n_1}\sum_{\mathrm{Sh}(n_2,n\mi n_1\mi n_2)}A_{n_2}\ot\ldots
\ot\sum_{n_{m\mi 1}=0}^{n\mi \ldots\mi n_{m\mi 2}}\sum_{\mathrm{Sh}(n_{m\mi 1},n_m)}A_{n_{m\mi 1}}\ot A_{n_m} \nonumber\\
=&\sum_{n_1+\ldots+n_m=n}\sum_{\mathrm{Sh}(n_1,n\mi n_1)}\ldots\sum_{\mathrm{Sh}(n_{m\mi 1},n_m)}A_{n_1}\ot\ldots \ot A_{n_m}=
\sum_{\substack{n_1+\ldots+n_m=n \\ \mathrm{Sh}(n_1,\ldots,n_m)}}A_{n_1}\ot\ldots\ot A_{n_m}
\label{mCauchyKelly}
\end{align}
where $\mathrm{Sh}(n_1,\ldots,n_m)$ is that subset of $P_n$ of permutations $\sigma$ for which 
\begin{displaymath}
\sigma(1)<\ldots<\sigma(n_1),\quad \sigma(n_1+1)<\ldots<\sigma(n_1+n_2), \quad\ldots\quad, \sigma(n\mi n_m+1)<\ldots<\sigma(n)
\end{displaymath}
i.e. the order is preserved only among elements in the same 
$n_i$-part, see e.g. \cite[\S~1.1]{NotesonAlgOperads}.

The substitution monoidal product in species is $A\circ B=\int^mA_m\ot B^{\bullet m}$
as in \cite[~{}3.2]{KellyonMay}, with
\begin{equation} \label{eq:nKellysub}
 (A\circ B)_n=\int^m A_m\ot B^{\bullet m}_n
 \stackrel{\cref{mCauchyKelly}}{\cong}\int^m\sum_{\substack{n_1{+}\ldots{+}n_m=n \\ \textrm{Sh}(n_1,\ldots,n_m)}}A_m\ot B_{n_1}\ot\ldots\ot B_{n_m}
\end{equation}
Analogously to \cref{eq:subwithHadCau}, substitution can be expressed in terms of the pointwise and Cauchy by
\begin{equation}\label{eq:subVSym}
 \begin{tikzcd}[row sep=.3in]
\VP\times\VP\ar[r,dashed,"\circ"]\ar[d,"1\times(-)^\bullet"'] & \VP \\
\VP\times[\ca{P},\VP]\ar[r,"{\wt{\ot}}"'] & {}[\ca{P},\VP]\ar[u,"\mathrm{\displaystyle\int^{m\in\ca{P}}}"']
\end{tikzcd}
 \end{equation}
or equivalently \cite[B.11]{Species} $A\circ B=\colim_{m}(A_m\ot B^{\bullet m})=\sum_{m\geq0}\left(A_k\ot B^{\bullet m}\right)_{P_m}$.
The unit species is the same as \cref{eq:unitsub}, expressed by $\X=\ca{P}(1,-)*I.$
Like substitution for graded objects in $\ca{V}$, this tensor product is not symmetric and moreover, when $\ca{V}$ is closed, $(\VSym,\circ,\X)$ is only left closed via $-\circ A\dashv [A,-]_\ell^\circ$ defined by
\begin{equation}\label{eq:subleftclosed}
 [A,B]^\circ_{\ell,n}
 =\int_m[A^{\bullet n}_m,B_m]_\ell.
\end{equation}

Similarly to the alternative substitution $\circ'$ \cref{eq:circ'} for graded objects in $\ca{V}$, in \cite[\S~B.4.4]{Species} there is the limit counterpart operation on species
\begin{equation}
 A\circ' B=\lim_m (A_m\ot B^{\bullet m})
\end{equation}
which makes $\VSp$ into a lax monoidal category. 
In the context of positive species, namely species $A\colon\ca{B}\to\ca{V}$ for which $A(\emptyset)=0$, the substitution formula becomes \cite[Def.~8.5~(8.8)]{Species}
\begin{equation}\label{eq:positivespeciessub}
 (A\circ B)(I)=\sum_{X\vdash I}A(X)\ot\bigotimes_{S\in X}B(S)
\end{equation}
where the sum is now over all \emph{partitions} $X$ of the finite set $I$, namely disjoint non-empty subsets of $I$ whose union is $I$; see also \cite[\S~1.4]{CombinatorialSpecies}. 
Moreover, as explained in \cite[\S~B.4.8]{Species}, in the case of positive species the two products coincide $\circ=\circ'$ as was the case for positive graded objects earlier.

Monoids in $(\VSp,\circ,\X)$ are \emph{symmetric}, or $\Sigma$-\emph{operads}: they are species $\{A_n\}_{n\geq0}$ equipped with multiplication and unit as in \cref{eq:nonsymoperad}, satisfying associativity and unitality as well as the extra axiom of $P_n$-equivarience, namely naturality of the morphism $\mu\colon A\circ A\to A$ in $\VSp$. Symmetric operads were originally introduced in \cite{Mayoperad} in topological spaces, and have been extensively studied ever since in various contexts. A few suggestive references are \cite{GetzlerJones,AlgebraicOperads,GambinoJoyal}.

\begin{rmk}
Many authors e.g. \cite{MarklOperadsProps} consider operad composition to have components
$$A_m\ot A_{n_1}\ot\ldots A_{n_m}\to A_{n_1+\ldots+n_m}$$
for any $m\geq1$ excluding $m=0$, but for $n_i\geq0$. As also explained for example in \cite[{}~I.1.1.3]{FresseHomotopy}, this is due to the fact that when $m=0$, the multiplication becomes an endomorphism $A_0\to A_0$ which by the axioms is forced to be the identity. So even though it does not provide any extra information, it is required for correctly specifying the axioms.

Also, these should not be confused with \emph{positive} symmetric operads, namely monoids in the full subcategory of species $\{A_n\}_{n\geq1}$  or equivalently $A_0=0$, in which case all the $n_i$'s would furthermore be strictly greater than $0$.
This is the convention used in \cite{Operadsinalgtopphy} in the context of an arbitrary symmetric monoidal category, and in fact matches the original one in \cite{Mayoperad}, since $\ca{V}$ therein is the category of based compactly generated Hausdorff spaces and $A_0$ is set to be $\{*\}$, the zero object. Positive operads are sometimes called \emph{non-unitary} \cite[\S~I.1.1.19]{FresseHomotopy} forming a category denoted $\nc{Op}_\emptyset$.
\end{rmk}

Dually, comonoids in $(\VSp,\circ,\X)$ are what we call \emph{symmetric cooperads}, namely species $\{C_n\}_{n\geq0}$ equipped with comultiplication and counit as in \cref{eq:nonsymcooperads} which are coassociative, counital and $P_n$-equivariant, see e.g. \cite{GetzlerJones}. The categories of symmetric operads and cooperads are denoted by $\Mon_\circ(\Sp(\ca{V}))=\nc{sOpd}(\ca{V})$ and $\Comon_\circ(\Sp(\ca{V}))=\nc{sCoopd}(\ca{V})$, and their positive counterparts $\nc{sOpd}_+$ and $\nc{sCoopd}_+$. Similarly to earlier patterns, a different convention is that a symmetric cooperad is a comonoid in $(\VSp,\circ',\X)$, in which case the comultiplication and counit arrows are
\begin{equation}\label{eq:circ'cooperad}
Z_{n_1+\ldots+n_m}\to Z_m \ot Z_{n_1}\ot\ldots\ot Z_{n_k},\qquad \epsilon=\begin{cases}
\epsilon_1=Z_1\to I, & n=1 \\
\epsilon_n=Z_n\to0, & n\neq0
\end{cases}
\end{equation}
satisfying appropriate axioms. When $\ca{V}$ has a zero object, this definition is precisely dual to that of an operad, namely a cooperad is an operad in $\ca{V}^\op$.

As was the case for positive graded objects and non-symmetric (co)operads, positive species and positive symmetric (co)operads $\{A_n\}_{n\geq1}$ are sometimes considered as the fundamental notions rather than their non-positive counterparts, see e.g. \cite{KoszulDualityOperads,ChingGoodwillie}.

\subsection{Duoidal structures and enrichment}
\label{sec:duoid-struct-enrichm}

In this section, we explore various duoidal structures on graded objects and species on $\ca{V}$. Consequently, applying Sweedler theory for duoidal categories as described in \cref{sec:SweedlerTheoryduoid}, we obtain certain enrichments of the various categories of (co)monoids with respect to the above described tensor products.

The following two examples generalize \cite[Ex.~6.22]{Species} from the category of vector spaces to the context of arbitrary monoidal categories, under certain assumptions. In particular, the following is a corollary to \cref{prop:duoidalpresheaves}.

\begin{lemma}\label{prop:duoidVGrd1}
If $\ca{V}$ is a symmetric monoidal category with (finite) coproducts which are preserved by $\ot$ in both variables, $(\VGrd,\bullet,\1,\otimes,\B{I})$ is a duoidal category. 
\end{lemma}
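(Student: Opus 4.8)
The plan is to derive this as a direct consequence of Proposition~\ref{prop:duoidalpresheaves}, once the cocompleteness hypothesis on the target is traded for the weaker finite-coproduct assumption. I would instantiate that proposition with $\mathcal{A} = \mathbb{N}$, taken as the small symmetric monoidal category of finite cardinals under addition (unit $0$), and $\mathcal{B} = \mathcal{V}$ with its given symmetric --- hence braided --- monoidal structure. Then $[\mathbb{N},\mathcal{V}] = \Grd(\mathcal{V})$, the pointwise monoidal structure is the Hadamard product $(\otimes,\B{I})$ of~\eqref{eq:Had}, and the convolution of addition with $\otimes$ is, via the coend formula~\eqref{eq:dayconv} and the discreteness of $\mathbb{N}$, exactly the Cauchy product $(\bullet,\1)$ of~\eqref{eq:CauchySym}. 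The conclusion of Proposition~\ref{prop:duoidalpresheaves} is then precisely the claim that $(\Grd(\mathcal{V}),\bullet,\1,\otimes,\B{I})$ is duoidal.

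First I would explain why full cocompleteness of $\mathcal{V}$ is not needed in this instance. Because $\mathbb{N}$ is discrete, the left Kan extension defining $F\bullet G$ at $n$ is the coproduct of $F(k)\otimes G(m)$ over the fibre $\{(k,m):k+m=n\}$ of addition, and this fibre is finite; likewise $\1$ is built from a single copower of $I$. Hence $\bullet$ and $\1$ exist as soon as $\mathcal{V}$ has finite coproducts. The associativity and unit constraints of the convolution, the interchange law $\zeta$, the structure maps $\delta_{\1},\mu_{\B{I}},\iota$ of Definition~\ref{def:duoidal}, and the verification of the duoidal axioms, are all assembled from the universal property of these finite coproducts together with the symmetry of $\mathcal{V}$ and the canonical symmetry of addition on $\mathbb{N}$; they go through precisely because $\otimes$ preserves finite coproducts in each variable, which is the hypothesis. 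This weakening is exactly the extra generality in which Appendix~\ref{sec:duoid-struct-categ} proves Proposition~\ref{prop:duoidalpresheaves}, so I would cite that appendix rather than repeat the coherence diagrams.

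The residual work is purely organisational: identifying the abstract convolution and pointwise structures on $[\mathbb{N},\mathcal{V}]$ with the explicit Cauchy and Hadamard formulas, and checking that ``$\mathbb{N}$ with addition'' fits the (weakened) hypotheses of the appendix version of Proposition~\ref{prop:duoidalpresheaves}. I do not anticipate a genuine obstacle; the one point deserving a sentence of care is that the interchange law $\zeta$ mixes a coproduct arising from the first tensor slot past a coproduct arising from the second, so one must note that $\otimes$ preserving finite coproducts in \emph{each} variable --- applied twice --- is what makes $\zeta$ well defined and natural. Beyond that, the lemma is a bookkeeping specialisation and I would keep the proof to a couple of sentences pointing at Proposition~\ref{prop:duoidalpresheaves} and Appendix~\ref{sec:duoid-struct-categ}.
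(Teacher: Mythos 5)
Your proposal is correct in substance, and it follows the route the paper itself advertises (the sentence immediately preceding the lemma calls it a corollary of \cref{prop:duoidalpresheaves}), but it is not the route the paper's written proof actually takes. The paper's sketch constructs the duoidal data explicitly: the interchange law is exhibited as the canonical inclusion
$\sum_{k+m=n}V_k\ot W_k\ot U_m\ot Z_m\hookrightarrow\sum_{k_1+m_1=n,\,k_2+m_2=n}V_{k_1}\ot W_{k_2}\ot U_{m_1}\ot Z_{m_2}$
of a smaller finite coproduct into a larger one (using the symmetry and preservation of coproducts by $\ot$), and the maps $\delta_{\1},\mu_{\I},\iota$ are written out componentwise. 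Your reduction to \cref{prop:duoidalpresheaves} buys economy and makes the conceptual origin of the structure transparent; the paper's explicit description buys formulas that are reused later --- e.g.\ the inclusion \eqref{inter1} reappears verbatim in the convolution-type composite \eqref{eq:convHadCauchy} --- so the sketch is not redundant bookkeeping.

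One claim in your writeup is inaccurate and worth flagging, although it does not sink the argument. Appendix~\ref{sec:duoid-struct-categ} does \emph{not} prove \cref{prop:duoidalpresheaves} under the weakened finite-coproduct hypothesis: Theorem~\ref{thm:3} there still assumes $(\mathcal{B},\diamond,i)$ is a \emph{cocomplete} monoidal category (its ``extra degree of generality'' is that $\mathcal{B}$ may be duoidal rather than braided), and the representability of $\mathbb{B}^{\mathbb{A}}$ by the convolution functor category is quoted from a reference that also assumes cocompleteness. So you cannot simply ``cite that appendix rather than repeat the coherence diagrams''; the justification for trading cocompleteness for finite coproducts is exactly the finite-fibre argument you give yourself (the fibres of $+\colon\mathbb{N}\times\mathbb{N}\to\mathbb{N}$ are finite and $\mathbb{N}$ is discrete, so every colimit in sight is a finite coproduct preserved by $\ot$). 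That argument is sound and closes the gap, but it should be presented as the load-bearing step rather than attributed to the appendix.
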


\begin{proof}[Sketch]
Recall that the pointwise $\ot$ and Cauchy $\bullet$ products for $\ca{V}$-graded objects are as in \cref{eq:Had,eq:CauchySym}. 
The interchange law $(V\ot W)\bullet (U\ot Z)\to (V\bullet U)\ot (W\bullet Z)$ for the suggested duoidal structure is explicitly given by
\begin{align}
 \sum_{k+m=n}V_k\ot W_k\ot U_m\ot Z_m&\hookrightarrow\sum_{k_1+m_1=n}V_{k_1}\ot U_{m_1}\ot\sum_{k_2+m_2=n} W_{k_2}\ot 
Z_{m_2} \label{inter1}\\
&\phantom{A}\cong\sum_{\substack{k_1+m_1=n \\ k_2+m_2=n}}V_{k_1}\ot W_{k_2}\ot U_{m_1}\ot Z_{m_2} \nonumber
\end{align}
which is the natural inclusion of a smaller sum into a larger sum, using symmetry as well as the fact that 
$\otimes$ commutes with coproducts. The rest of the structure maps \cref{eq:deltamuiota} are
\begin{align}
 \delta_{\1}&\colon{\1}\to\1\ot\1\textrm{ by } (\delta_{\1})_0\colon I\cong I\ot I \textrm{ and }\id_0 \textrm{ elsewhere} \label{eq:reststructuresbulletotimes}\\
 \mu_{\I}&\colon \I\bullet\I\to\I\textrm{ by } (\mu_{\I})_n\colon \sum_{k+m=n}I\xrightarrow{\nabla}I \nonumber \\
 \iota&\colon\1\to\I\textrm{ by }\iota_0\colon I\xrightarrow{\id}I\textrm{ and }\iota_n\colon0\xrightarrow{!}I\textrm{ for }n\neq0 \nonumber
\end{align}
\end{proof}

The following result shows that also combined the other way, pointwise and Cauchy form a duoidal structure when binary biproducts and a zero object exist.

\begin{lemma}\label{prop:duoidVGrd1'}
If $\ca{V}$ is a symmetric monoidal category with finite biproducts which are preserved by $\ot$ in both variables, $(\VGrd,\otimes,\B{I},\bullet,\1)$ is a duoidal category.
\end{lemma}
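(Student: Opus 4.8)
The plan is to derive the statement from \cref{prop:duoidVGrd1} by passing to opposite categories, the strengthened hypothesis of finite biproducts being exactly what is needed to make this transfer work. I would first recall the standard symmetry of duoidal categories (see e.g.\ \cite[\S 6]{Species}): whenever $(\ca{W},\diamond,i,\star,j)$ is duoidal, so is the opposite category with its two monoidal structures interchanged, $(\ca{W}^{\op},\star^{\op},j,\diamond^{\op},i)$, with interchange law and structure morphisms inherited from those of $\ca{W}$ (the roles of the two monoidal structures, of the two unit objects, and of the morphisms $\delta_i$ and $\mu_j$ being swapped). This is immediate from \cref{def:duoidal}, since reversing every arrow in a commuting diagram yields a commuting diagram and turns ``$(\star,j)$ is lax $\diamond$-monoidal'' into ``$(\star^{\op},j)$ is oplax $\diamond^{\op}$-monoidal'', which is one of the equivalent forms of the defining condition of the interchanged structure.

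Next I would apply \cref{prop:duoidVGrd1} to $\ca{V}^{\op}$. Since $\ca{V}$ is symmetric monoidal with finite biproducts preserved by $\ot$ in each variable, $\ca{V}^{\op}$ is symmetric monoidal with finite coproducts --- these being the finite products of $\ca{V}$ --- which are preserved by $\ot$; hence $(\Grd(\ca{V}^{\op}),\bullet',\1',\ot,\B{I})$ is duoidal, where $\bullet'$ and $\1'$ denote the Cauchy tensor product and unit assembled from the coproducts of $\ca{V}^{\op}$. The symmetry recalled above then gives that $(\Grd(\ca{V}^{\op})^{\op},\ot,\B{I},(\bullet')^{\op},\1')$ is duoidal, and it remains to identify this with the claimed structure on $\VGrd$. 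As $\mathbb{N}$ is discrete, $\Grd(\ca{V}^{\op})^{\op}=\Grd(\ca{V})$; the pointwise tensor product and its unit $\B{I}$ are manifestly unchanged under $(\mi)^{\op}$; and --- this is the single place the biproduct hypothesis is consumed --- the $n$-th component of $V\bullet'W$ is the $\ca{V}^{\op}$-coproduct $\sum_{k+m=n}V_k\ot W_m$, which, being simultaneously a product and a coproduct in $\ca{V}$, coincides with the $n$-th component of the Cauchy product $V\bullet W$ in $\ca{V}$; likewise $\1'=\1$, the zero object of $\ca{V}$ serving both as the initial object of $\ca{V}^{\op}$ and as that of $\ca{V}$. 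Therefore $(\VGrd,\otimes,\B{I},\bullet,\1)$ is duoidal.

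For concreteness I would also record the resulting data, which is dual to \cref{eq:reststructuresbulletotimes}: the interchange $\zeta\colon(V\bullet W)\ot(U\bullet Z)\to(V\ot U)\bullet(W\ot Z)$ has $n$-th component the morphism out of $\sum_{k+m=n}\sum_{p+q=n}V_k\ot W_m\ot U_p\ot Z_q$ which, on the summand indexed by the pair of decompositions $(k,m),(p,q)$, is the zero map unless $(p,q)=(k,m)$, in which case it is $1\ot\gamma\ot1\colon V_k\ot W_m\ot U_k\ot Z_m\to V_k\ot U_k\ot W_m\ot Z_m$ followed by the coprojection into the $(k,m)$-summand of the target --- legitimate precisely because the coproduct is also a product; $\delta_{\B{I}}\colon\B{I}\to\B{I}\bullet\B{I}$ is in each degree the diagonal $I\to\prod_{k+m=n}I\cong\sum_{k+m=n}I$; $\mu_{\1}\colon\1\ot\1\to\1$ is the isomorphism $I\ot I\cong I$ in degree $0$ and the identity of $0$ elsewhere; and $\iota\colon\B{I}\to\1$ is the identity in degree $0$ and the unique map $I\to 0$ in each degree $n\neq0$. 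The duoidal axioms then hold either by the duality argument above or by a direct, routine diagram chase. I do not anticipate a genuine obstacle: the work is bookkeeping, and the only substantive point --- recurring in each verification --- is that finite biproducts let the relevant finite coproducts double as products, so that the projection $\zeta$, the diagonal $\delta_{\B{I}}$, and the map $I\to0$ all make sense.
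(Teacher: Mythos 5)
Your proof is correct, but it takes a genuinely different route from the paper's. The paper's argument (a sketch, like yours) is a direct construction: it exhibits the interchange law $(V\bullet U)\ot(W\bullet Z)\to(V\ot W)\bullet(U\ot Z)$ as the surjection that is the identity on matching summands and zero elsewhere --- equivalently the projection from a larger finite product onto a smaller one --- and writes down $\delta_{\I}$, $\mu_{\1}$ and $\iota$ explicitly, leaving the axioms as routine. You instead obtain the result formally, by applying \cref{prop:duoidVGrd1} to $\ca{V}^{\op}$ and invoking the standard fact that $(\ca{W}^{\op},\star^{\op},j,\diamond^{\op},i)$ is duoidal whenever $(\ca{W},\diamond,i,\star,j)$ is, with the biproduct hypothesis consumed exactly in identifying the Cauchy product assembled from $\ca{V}^{\op}$-coproducts (i.e.\ $\ca{V}$-products) with the usual one on $\VGrd$. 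This is a clean trade: your route reduces all axiom-checking to the already-established Lemma~\ref{prop:duoidVGrd1} and makes conceptually transparent why finite biproducts are the right strengthening of the hypothesis, at the cost of the (routine but real) verification that $(\Grd(\ca{V}^{\op})^{\op},\ot,(\bullet')^{\op})$ really is monoidally isomorphic to $(\VGrd,\ot,\bullet)$ --- the associativity and unit constraints of $\bullet'$ are built from universal properties of products rather than coproducts, and one must check the canonical comparison is a (strong) monoidal isomorphism; the paper's route avoids that identification but must in principle verify the duoidal axioms from scratch. Reassuringly, the explicit data you extract at the end (projection interchange law, diagonal $\delta_{\I}$, $\mu_{\1}$, $\iota$) agrees with the paper's \eqref{inter2} and \eqref{eq:deltaI}. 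One small point worth making explicit: preservation of finite biproducts by $\ot$ as coproducts automatically gives preservation as products (the composite of the two canonical comparison maps is the identity matrix), which is what licenses applying Lemma~\ref{prop:duoidVGrd1} to $\ca{V}^{\op}$.
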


\begin{proof}[Sketch]
 Switching the role of the two monoidal products from the previous proof, the interchange law
$(V\bullet U)\ot (W\bullet Z)\to(V\ot W)\bullet(U\ot Z)$ is given by the surjection
\begin{equation}\label{inter2}
 \sum_{\substack{k_1+m_1=n \\ k_2+m_2=n}}V_{k_1}\ot W_{k_2}\ot U_{m_1}\ot Z_{m_2}\twoheadrightarrow\sum_{k+m=n}V_k\ot W_k\ot U_m\ot Z_m
\end{equation}
which is identity when $k_1=k_2$ and $m_1=m_2$ and the zero map in any other case. Of course this is the same as the projections from a larger (finite) product to a smaller product.
The other structure maps are
\begin{align}
 \delta_{\I}&\colon{\I}\to\I\bullet\I\textrm{ by } (\delta_{\I})_n\colon I\xrightarrow{\Delta}\sum_{k+m=n}I\cong\prod_{k+m=n}I \label{eq:deltaI} \\
 \mu_{\1}&\colon \1\ot\1\to\1\textrm{ by } (\mu_{\1})_0\colon I\ot I\cong I\textrm{ and }(\mu_{\1})_n\colon  0\xrightarrow{\id}0
 \nonumber \\
 \iota&\colon\I\to\1\textrm{ by }\iota_0\colon I\xrightarrow{\id}I, \iota_n\colon I\xrightarrow{!}0 \nonumber
\end{align}
\end{proof}

Now \cref{thm:sweedlerduoidal} applies to provide two distinct enriched categories. The first one concerns $\ot$-monoids and comonoids, using a $\bullet$-monoidal structure.

\begin{thm}\label{thm:CauchyHadGrdEnr}
Suppose $\ca{V}$ is a locally presentable, symmetric monoidal closed category. Then the category
$\Grd(\Mon(\ca{V}))$ of graded objects in monoids is tensored and cotensored enriched in the symmetric monoidal closed category $(\Grd(\Comon(\ca{V})),\bullet,\1)$ of graded objects in comonoids, with the Cauchy tensor product.
\end{thm}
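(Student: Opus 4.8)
The plan is to obtain this as an instance of \cref{thm:sweedlerduoidal}, applied to the duoidal category $(\VGrd,\bullet,\1,\otimes,\B{I})$ of \cref{prop:duoidVGrd1}; thus $\diamond=\bullet$ is the Cauchy product and $\star=\otimes$ is the Hadamard (pointwise) product. Because the Hadamard structure on $\VGrd$ is literally the pointwise monoidal structure, we have $\Mon_\star(\VGrd)=\Mon[\mathbb{N},\ca{V}]$ and $\Comon_\star(\VGrd)=\Comon[\mathbb{N},\ca{V}]$, which under the isomorphisms $\Mon[\ca{J},\ca{V}]\cong[\ca{J},\Mon(\ca{V})]$ and $\Comon[\ca{J},\ca{V}]\cong[\ca{J},\Comon(\ca{V})]$ are exactly $\Grd(\Mon(\ca{V}))$ and $\Grd(\Comon(\ca{V}))$. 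So the conclusion of \cref{thm:sweedlerduoidal} lands precisely on the categories named in the statement.

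Next I would verify the three hypotheses of \cref{thm:sweedlerduoidal} for the ambient category $\VGrd$. \emph{Local presentability}: $\VGrd=[\mathbb{N},\ca{V}]$ is a category of functors from a small discrete category into a locally presentable category, hence locally presentable. \emph{Accessibility of $\star=\otimes$}: colimits in $[\mathbb{N},\ca{V}]$ are pointwise and the Hadamard product is computed pointwise from $\otimes_{\ca{V}}$, which preserves all colimits in each variable since $\ca{V}$ is monoidal closed; hence the Hadamard product preserves filtered colimits in each variable separately, and a bifunctor with this property preserves filtered colimits jointly (the diagonal of a filtered category being final), so $\otimes$ is accessible. \emph{Biclosedness of $(\diamond,i)=(\bullet,\1)$}: the Cauchy convolution on $\VGrd$ is left (resp. right) closed whenever $\ca{V}$ is complete and left (resp. right) closed, so since $\ca{V}$ is locally presentable and symmetric monoidal closed, $(\VGrd,\bullet,\1)$ is biclosed, with left internal hom $[V,W]^\bullet_{\ell,n}\cong\prod_{i\ge 0}[V_i,W_{i+n}]_\ell$. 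Therefore \cref{thm:sweedlerduoidal} yields a tensored and cotensored $\Comon_\star(\VGrd)$-category with underlying category (isomorphic to) $\Mon_\star(\VGrd)\cong\Grd(\Mon(\ca{V}))$, the enriching base being $\Comon_\star(\VGrd)$ equipped with the $\bullet$-structure lifted from $\VGrd$.

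It remains to identify $(\Comon_\star(\VGrd),\bullet)$ with the symmetric monoidal closed category $(\Grd(\Comon(\ca{V})),\bullet,\1)$. By \cref{prop:MonComonproperties}, $\Comon(\ca{V})$ is locally presentable (so complete and cocomplete) and, $\ca{V}$ being symmetric, symmetric monoidal closed; hence the Cauchy convolution on $\Grd(\Comon(\ca{V}))$ exists and is symmetric monoidal closed, being the convolution of $\ca{A}=\mathbb{N}$ with its additive symmetry against $\ca{B}=\Comon(\ca{V})$ as in \cref{prop:duoidalpresheaves}. For the comparison: the underlying graded object of the lifted $c\bullet d$ is $c\bullet d$ formed in $\VGrd$, namely $(c\bullet d)_n=\sum_{k+m=n}c_k\otimes_{\ca{V}}d_m$, carrying the comonoid structure obtained by applying $\delta^c\diamond\delta^d$ followed by the interchange law; and since $\Comon(\ca{V})$ is comonadic over $\ca{V}$ its forgetful functor is a left adjoint, so the coproducts and the tensor products appearing here agree with those computed inside $\Comon(\ca{V})$, whence this object agrees with $\sum_{k+m=n}c_k\otimes_{\Comon(\ca{V})}d_m$, the $n$-th component of the Cauchy convolution of $c,d$ in $\Grd(\Comon(\ca{V}))$ of \eqref{eq:CauchySym}; comparing comultiplications (which in both cases reduce, on the $(k,m)$-summand, to $\delta^c_k\otimes\delta^d_m$ postcomposed with a middle braiding and the coprojection), counits, and units is then routine. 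I expect this last identification --- specifically the point that the forgetful functor from $\Comon(\ca{V})$ creates the relevant colimits --- to be the only step needing genuine care; everything else is an assembly of \cref{prop:duoidVGrd1,prop:MonComonproperties,thm:sweedlerduoidal}, together with the description following \cref{thm:sweedlerduoidal} which exhibits the cotensor of a comonoid $c$ and a monoid $a$ as $[c,a]^\diamond$, the tensor as $c\triangleright^\diamond a$, and the enriched hom as $P(a,b)$.
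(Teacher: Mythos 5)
Your proposal is correct and follows exactly the paper's route: apply \cref{thm:sweedlerduoidal} to the duoidal category $(\VGrd,\bullet,\1,\otimes,\B{I})$ of \cref{prop:duoidVGrd1}, checking local presentability of $\VGrd$, accessibility of the pointwise product (which preserves all colimits in each variable by closedness of $\ca{V}$), and biclosedness of the Cauchy structure. The paper's own proof is terser and leaves implicit the identification of $(\Comon_\otimes(\VGrd),\bullet)$ with $(\Grd(\Comon(\ca{V})),\bullet,\1)$, which you spell out correctly via comonadicity of $\Comon(\ca{V})$ over $\ca{V}$.
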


\begin{proof}
For the duoidal category $(\VGrd,\bullet,\1,\otimes,\I)$ of \cref{prop:duoidVGrd1}, we first of all know
that $\VGrd$ is locally presentable. Moreover, in the previous section it was established that $(\VGrd,\bullet,\1)$ is a symmetric monoidal closed category, and also the pointwise product $\ot$ preserves all colimits in both variables since it is symmetric and closed under the above assumptions.  The result now follows from \cref{thm:sweedlerduoidal}.
\end{proof}

Notice how this enrichment is different than the earlier \cref{thm:HadGrdEnr}, since the monoidal base uses a different structure.  Explicitly, $[-,-]^\bullet$ induces an action of $\ot$-comonoids on $\ot$-monoids, and its two adjoints $P^\bullet,\triangleright^\bullet$ form the desired structure as in
\cref{eq:duoidalhomPtriangle}.
In particular, \cref{eq:duoidalconvolution} here gives $[Z,V]^\bullet$ the structure of a graded object in $\Mon(\ca{V})$, for any $Z\in\Grd(\Comon(\ca{V}))$ and $V\in\Grd(\Mon(\ca{V}))$, via
\begin{equation}\label{eq:convHadCauchy}
 \begin{tikzcd}[row sep=.15in]
 \left(\left([Z,V]^\bullet\ot[Z,V]^\bullet\right)\bullet Z\right)_n\ar[d,equal]\ar[dddrr,dashed,bend left] && \\
\displaystyle\sum_{k+m=n}\left(\prod_{i\geq0}[Z_i,V_{i+k}]\ot\prod_{j\geq0}[Z_j,V_{j+k}]\right)\ot Z_m\ar[d,"\sum 1\ot\delta_m"'] && \\
\displaystyle\sum_{k+m=n}\left(\prod_{i\geq0}[Z_i,V_{i+k}]\ot\prod_{j\geq0}[Z_j,V_{j+k}]\right)\ot Z_m\ot Z_m\ar[d,"\cref{inter1}"'] & &\\
\displaystyle\left(\sum_{k_1+m_1=n}\prod_{i\geq0}[Z_i,V_{i+k_1}]\ot Z_{m_1}\right)\ot
\left(\sum_{k_2+m_2=n}\prod_{j\geq0}[Z_j,V_{j+k_2}]\ot Z_{m_2}\right)\ar[r,"\textrm{ev}\ot\textrm{ev}"']
&
V_n\ot V_n\ar[r,"\mu_n"'] & V_n
 \end{tikzcd}
\end{equation}

In the opposite combination, we obtain an enrichment of $\bullet$-monoids in $\bullet$-comonoids, using the monoidal closed $\ot$-structure.

\begin{thm}\label{thm:HadCauchyGrdEnr}
Suppose $\ca{V}$ is a locally presentable, symmetric monoidal closed category with finite biproducts. The 
category $\nc{gMon}(\ca{V})$ of graded monoids is tensored and cotensored enriched the symmetric monoidal closed 
category $(\nc{gComon}(\ca{V}),\ot,\I)$.
\end{thm}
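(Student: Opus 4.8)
The plan is to invoke \cref{thm:sweedlerduoidal} for the duoidal category $(\VGrd,\ot,\B{I},\bullet,\1)$ supplied by \cref{prop:duoidVGrd1'}, in complete analogy with the proof of \cref{thm:CauchyHadGrdEnr} but with the two monoidal products interchanged: here $\diamond=\ot$ is the pointwise (Hadamard) product and $\star=\bullet$ is the Cauchy product, so that $\Mon_\star(\VGrd)=\nc{gMon}(\ca{V})$ and $\Comon_\star(\VGrd)=\nc{gComon}(\ca{V})$. It then remains only to check the three hypotheses of \cref{thm:sweedlerduoidal}: that $\VGrd$ is locally presentable, that $\star=\bullet$ is an accessible functor, and that $(\diamond,i)=(\ot,\B{I})$ is biclosed.

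I would verify these in turn. First, $\VGrd=[\mathbb{N},\ca{V}]$ is a countable power of the locally presentable category $\ca{V}$, hence is itself locally presentable (in particular complete and cocomplete, so both the Cauchy product and its internal homs exist). Second, the pointwise tensor product $\ot$ on $\VGrd$ is biclosed: $\ca{V}$, being symmetric monoidal closed, is biclosed, and the componentwise internal homs $[V,W]^\ot_n=[V_n,W_n]$ provide both a left and a right adjoint to $\ot$ on $\VGrd$; moreover the finite biproducts assumed here are exactly what \cref{prop:duoidVGrd1'} needs in order for the duoidal structure to exist (and in a symmetric monoidal closed category $\ot$ automatically preserves finite biproducts, being cocontinuous in each variable). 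Third, the Cauchy product is accessible: by \cref{eq:CauchySym} the object $(V\bullet W)_n$ is a finite coproduct of the objects $V_k\ot W_m$ with $k+m=n$, and since $\ot$ on $\ca{V}$ is cocontinuous in each variable and finite coproducts commute with filtered colimits, $\bullet$ preserves filtered colimits in each variable.

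With the hypotheses in hand, \cref{thm:sweedlerduoidal} produces a tensored and cotensored $\Comon_\bullet(\VGrd)$-category whose underlying category is $\Mon_\bullet(\VGrd)$, which is precisely the asserted enrichment of $\nc{gMon}(\ca{V})$ in $\nc{gComon}(\ca{V})$ with the pointwise tensor product; the enriched homs are the Sweedler comonoids $P(a,b)$, the cotensor of a graded comonoid $c$ with a graded monoid $a$ is the convolution monoid $[c,a]^\ot$ whose multiplication is obtained from \cref{eq:duoidalconvolution} with $\diamond=\ot$, and the tensor is the Sweedler product $c\triangleright^\ot a$. That the enriching category $(\nc{gComon}(\ca{V}),\ot,\B{I})$ is symmetric monoidal closed is part of what \cref{thm:sweedlerduoidal} delivers: biclosedness of the lifted $\ot$-structure on $\Comon_\bullet(\VGrd)$ is the point addressed inside that proof (via \cite[\S 3.2]{MonComonBimon}, using local presentability and accessibility), while its symmetry descends from the symmetry of the pointwise product on $\VGrd$ because the duoidal category of \cref{prop:duoidVGrd1'} is $\diamond$-symmetric. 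The only steps requiring genuine verification are thus the accessibility of the Cauchy product and the observation that the pointwise symmetry lifts to $\bullet$-comonoids; neither is hard, so no serious obstacle arises, and all of the real content sits in the duoidal Sweedler machinery of \cref{sec:SweedlerTheoryduoid}.
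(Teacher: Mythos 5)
Your proposal is correct and follows exactly the paper's route: apply \cref{thm:sweedlerduoidal} to the duoidal category $(\VGrd,\ot,\B{I},\bullet,\1)$ of \cref{prop:duoidVGrd1'}, after checking local presentability of $\VGrd$, biclosedness of the pointwise structure, and accessibility of the Cauchy product. The paper's own proof is terser (it simply notes that both structures are symmetric monoidal closed, hence the hypotheses hold); your explicit verification of accessibility of $\bullet$ via finite coproducts of $V_k\ot W_m$ and the lifting of the symmetry to $\bullet$-comonoids fills in the same details correctly.
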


\begin{proof}
For the duoidal category $(\VGrd,\otimes,\B{I},\bullet,\1)$ of \cref{prop:duoidVGrd1'} under these assumptions, \cref{thm:sweedlerduoidal} applies once again, since both structures are symmetric monoidal closed therefore more than sufficient to cover the required conditions. 
\end{proof}

The induced action $[\mi,\mi]^\ot$ of $\bullet$-comonoids on $\bullet$-monoids here produces a $\bullet$-monoid $[Z,V]^\ot$ via \cref{eq:duoidalconvolution} expressed by
\begin{equation}\label{eq:convCauchyHad}
 \begin{tikzcd}[row sep=.15in]
 (([Z,V]^\ot\bullet[Z,V]^\ot)\ot Z)_n\ar[d,equal]\ar[dddrr,dashed,bend left] && \\
 \left(\displaystyle\sum_{k_1+m_1=n}[Z,V]^\ot_{k_1}\ot[Z,V]^\ot_{m_1}\right)\ot Z_n\ar[d,"1\ot\delta_n"'] && \\
 \left(\displaystyle\sum_{k_1+m_1=n}[Z_{k_1},V_{k_1}]\ot[Z_{m_1},V_{m_1}]\right)\ot \displaystyle\sum_{k_2+m_2=n}Z_{k_2}\ot Z_{m_2}\ar[d,"\cref{inter2}"'] && \\
 \displaystyle\sum_{k+m=n}[Z_k,V_k]\ot Z_k\ot[Z_m,V_m]\ot Z_m\ar[r,"\sum\textrm{ev}\ot\textrm{ev}"'] & \displaystyle\sum_{k+m=n} V_k\ot V_m\ar[r,"\mu_n"'] & V_n
 \end{tikzcd}
\end{equation}

Similarly to $\ca{V}$-graded objects, there are two duoidal structures on $\ca{V}$-species, extending \cite[Prop.~8.68]{Species} from vector spaces to arbitrary monoidal categories. Once again, the first one comes from the much more general \cref{prop:duoidalpresheaves}.

\begin{lemma}\label{prop:duoidVSym1}
If $\ca{V}$ is a symmetric monoidal category with coproducts preserved by $\ot$ in both entries, $(\VSym,\bullet,\1,\otimes,\B{I})$ is a duoidal category. If moreover $\ca{V}$ has finite biproducts, $(\VSym,\otimes,\B{I},\bullet,\1)$ is also a duoidal category.
\end{lemma}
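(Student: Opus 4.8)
The plan is to treat the two assertions separately, deducing the first from \cref{prop:duoidalpresheaves} and proving the second by transcribing the argument of \cref{prop:duoidVGrd1'} from graded objects to species.

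For the first statement, I would apply \cref{prop:duoidalpresheaves} (or rather the slightly more general version established in \cref{sec:duoid-struct-categ}) to the small symmetric monoidal category $\mathcal{A}=\mathbb{P}$ equipped with cardinal addition $(+,0)$ and to $\mathcal{B}=\ca{V}$: the pointwise product $\otimes$ on $\VSym$ is the ``$\square$-pointwise'' structure, and the Cauchy product $\bullet$ is exactly the Day convolution of $(+,0)$ with $(\otimes_{\ca{V}},I)$, as recorded in \cref{eq:CauchySym}. The only hypothesis of \cref{prop:duoidalpresheaves} not literally present is cocompleteness of $\mathcal{B}$; this is where the extra generality of the appendix is used, together with the observation that, since $\mathbb{P}$ is a groupoid with finite hom-sets, every coend occurring in the convolution over $\mathbb{P}$ collapses to the \emph{finite} coproduct over shuffles displayed on the right of \cref{eq:CauchySym}, so that only (finite) coproducts preserved by $\otimes$ are needed. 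Concretely the interchange law is the map $(A\otimes B)\bullet(C\otimes D)\to(A\bullet C)\otimes(B\bullet D)$ which on the summand indexed by a shuffle of $k+m=n$ is the diagonal inclusion into the summands indexed by the shuffles of $k_1+m_1=n$ and $k_2+m_2=n$, composed with the symmetry $B_k\otimes C_m\cong C_m\otimes B_k$ of $\ca{V}$ --- the species analogue of \cref{inter1} --- and the unit data $\delta_{\1},\mu_{\B{I}},\iota$ are the obvious analogues of \cref{eq:reststructuresbulletotimes}.

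For the second statement, \cref{prop:duoidalpresheaves} does not apply since the roles of the two tensor products are interchanged, so I would argue as in \cref{prop:duoidVGrd1'}. Assuming finite biproducts in $\ca{V}$ preserved by $\otimes$, the interchange law $(A\bullet C)\otimes(B\bullet D)\to(A\otimes B)\bullet(C\otimes D)$ is the morphism that on the summand indexed by shuffles of $k_1+m_1=n$ and $k_2+m_2=n$ is the identity when $(k_1,m_1)=(k_2,m_2)$ and the chosen shuffles agree, and the zero map otherwise; equivalently, it is the canonical projection from a larger finite biproduct onto a smaller one, exactly as in \cref{inter2}. The remaining structure maps are the species versions of \cref{eq:deltaI}: $\delta_{\B{I}}$ is a diagonal into a finite biproduct, $\mu_{\1}$ is the componentwise multiplication on $\1$, and $\iota\colon\B{I}\to\1$ is the evident comparison.

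In both cases it then remains to verify the duoidal axioms --- that $(\bullet,\1)$ is lax $\otimes$-monoidal, respectively $(\otimes,\B{I})$ is lax $\bullet$-monoidal, and that the associativity and unit constraints of the inner structure are monoidal natural transformations for the outer one. For the first statement these are supplied by \cref{prop:duoidalpresheaves}; for the second they constitute a diagram chase formally identical to the one behind \cref{prop:duoidVGrd1'}, the only new feature being $P_n$-equivariance of each structure map. I expect this equivariance check to be the only point genuinely special to species, and it is essentially automatic: every structure map above is built from symmetries of $\ca{V}$, (co)diagonals of biproducts, and reindexings of shuffle sets along the $P_n$-actions, all of which are $P_n$-equivariant by construction. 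The main (modest) obstacle is therefore organising the coherence verification of the second part so that this equivariance is manifest, rather than any new conceptual difficulty.
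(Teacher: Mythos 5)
Your proposal is correct and follows essentially the same route as the paper: the first duoidal structure is obtained from the general convolution result (\cref{prop:duoidalpresheaves}), and the second by transporting the interchange surjection and structure maps of \cref{prop:duoidVGrd1'} to species, with the shuffle indices carried along exactly as in the paper's sketch. The only content the paper adds beyond your argument is the side remark that for species the pointwise functor is in fact bilax monoidal with respect to the Cauchy product, which is not needed for the lemma itself.
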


\begin{proof}[Sketch]
Recall that the pointwise and Cauchy products for $\ca{V}$-species are as in \cref{eq:Had,eq:CauchySym}. The interchange laws 
\begin{displaymath}
\begin{tikzcd}
 (A\ot B)\bullet (C\ot D)\ar[r,shift left] & (A\bullet C)\ot (B\bullet D)\ar[l, shift left]
\end{tikzcd}
\end{displaymath}
follow from those for graded objects as in \cref{inter1,inter2}, along with the extra shuffle information that determines the position of a 
$k$-element subset inside $\{0,1,\ldots,n\mi1\}$:
\begin{displaymath}
 \begin{tikzcd}
{\displaystyle\sum_{k+m=n}}\;{\displaystyle\sum_{\mathrm{Sh}(k,m)}}A_k\ot B_k\ot C_m\ot D_m\ar[r,shift left,hook] & 
{\displaystyle\sum_{\substack{k_1+m_1=n \\ k_2+m_2=n}}}\;{\displaystyle\sum_{\substack{\mathrm{Sh}(k_1,m_1) \\ \mathrm{Sh}(k_2,m_2)}}}A_{k_1}\ot 
C_{m_1}\ot B_{k_2}\ot D_{m_2}\ar[l,shift left,->>]  
 \end{tikzcd}
\end{displaymath}
The inclusion is the natural one, whereas the surjection is defined by the identity when $k_1=k_2,m_1=m_2,\sigma=\sigma'\in\textrm{Sh}$ and by the 
zero map in any other case, equivalently the canonical projection since the sums are finite. The rest of the structure maps are as in \cref{eq:reststructuresbulletotimes,eq:deltaI}.

Moreover, as shown in \cite[8.58]{Species} for vector species, not only is the pointwise tensor product functor lax and oplax monoidal with respect to the Cauchy tensor product functor, but it is moreover \emph{bilax} monoidal - which is not the case for graded objects.
\end{proof}

\begin{rmk}
As explained in detail in \cite[Ex.~6.78 \& Prop.~8.68]{Species}, when $\ca{V}=\mathbf{Vect}^{f}_k$ the category of finite-dimensional $k$-vector spaces, those pairs of duoidal structures including the pointwise and the Cauchy tensor products are \emph{contragredient} to one another, both for graded objects and species.
\end{rmk}

In \cref{sec:pointw-or-hadam,sec:conv-struct}, it was established how both monoidal structures form a symmetric monoidal closed structure on species. Therefore
once again, \cref{thm:sweedlerduoidal} applies to both duoidal structures of the above proposition, establishing a $\bullet$-enrichment of $\ot$-monoids in $\ot$-comonoids as well as a $\ot$-enrichment of $\bullet$-monoids in $\bullet$-comonoids in the category of species in a locally presentable $\ca{V}$.

\begin{thm}\label{thm:CauchyHadSymEnr}
If $\ca{V}$ is a locally presentable, symmetric monoidal closed category, the category $\Sp(\Mon(\ca{V}))$ of species in monoids is tensored and cotensored enriched in the symmetric monoidal closed category $(\Sp(\Comon(\ca{V})),\bullet,\1)$ of species in comonoids with the Cauchy product. If $\ca{V}$ moreover has finite biproducts, the category $\nc{TwMon}(\ca{V})$ of twisted monoids is tensored and cotensored enriched in the symmetric monoidal closed category $(\nc{TwComon}(\ca{V}),\ot,\I)$ of twisted comonoids with the pointwise product.
\end{thm}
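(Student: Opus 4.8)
The plan is to deduce both assertions from the general \cref{thm:sweedlerduoidal}, applied to the two duoidal structures on $\Sp(\ca{V})$ furnished by \cref{prop:duoidVSym1}, exactly as was done for graded objects in the proofs of \cref{thm:CauchyHadGrdEnr,thm:HadCauchyGrdEnr}. For the first assertion I would take the duoidal category $(\VSym,\bullet,\1,\otimes,\B{I})$, so that $\diamond=\bullet$ and $\star=\otimes$, and verify the three hypotheses of \cref{thm:sweedlerduoidal}. First, $\Sp(\ca{V})=[\mathbb{P},\ca{V}]$ is locally presentable, $\mathbb{P}$ being small and $\ca{V}$ locally presentable. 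Second, the pointwise tensor product $\star=\otimes$ is cocontinuous in each variable, since colimits in $\Sp(\ca{V})$ are pointwise and $\ca{V}$ is symmetric monoidal closed; in particular it is accessible. Third, the Cauchy structure $(\bullet,\1)$ is biclosed: as recalled in \cref{sec:conv-struct}, it is left and right closed whenever $\ca{V}$ is complete and biclosed, and a locally presentable symmetric monoidal closed category is both. Then \cref{thm:sweedlerduoidal} yields a tensored and cotensored $\Comon_\otimes(\VSym)$-category with underlying category $\Mon_\otimes(\VSym)$; since $\otimes$ is computed pointwise, $\Mon_\otimes(\VSym)\cong\Sp(\Mon(\ca{V}))$ and $\Comon_\otimes(\VSym)\cong\Sp(\Comon(\ca{V}))$, the latter carrying precisely the Cauchy $\bullet$-product of species of comonoids.

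For the second assertion I would run the same argument with the roles of the two tensor products swapped, using the duoidal category $(\VSym,\otimes,\B{I},\bullet,\1)$, which \cref{prop:duoidVSym1} provides exactly under the additional hypothesis of finite biproducts in $\ca{V}$. Here $\diamond=\otimes$ is biclosed, with pointwise internal hom $[V,W]^\otimes_n=[V_n,W_n]$; the functor $\star=\bullet$ is accessible, being cocontinuous in each variable (a coend of copowers of pointwise tensor products); and $\Sp(\ca{V})$ is locally presentable as before. \cref{thm:sweedlerduoidal} then produces a tensored and cotensored $\Comon_\bullet(\VSym)$-category on $\Mon_\bullet(\VSym)$, and by definition $\Mon_\bullet(\Sp(\ca{V}))=\nc{TwMon}(\ca{V})$ and $\Comon_\bullet(\Sp(\ca{V}))=\nc{TwComon}(\ca{V})$, the latter with its pointwise $\otimes$-structure.

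I do not anticipate a genuine obstacle, the whole content being packaged into \cref{prop:duoidVSym1} and \cref{thm:sweedlerduoidal}. The only point that merits a word is the biclosedness of the \emph{lifted} $\diamond$-monoidal structure on the category of $\star$-comonoids --- that $(\Sp(\Comon(\ca{V})),\bullet)$ and $(\nc{TwComon}(\ca{V}),\otimes)$ are themselves biclosed monoidal categories --- but this is handled inside the proof of \cref{thm:sweedlerduoidal}, via the local presentability and accessibility hypotheses (cf.\ the comments after \cite[Prop.~2.9]{Measuringcomonoid}), so nothing new is needed. Finally, the symmetry of the monoidal base asserted in the statement follows from the observation after \cref{def:duoidal}: since $\ca{V}$ is symmetric both $\bullet$ and $\otimes$ are symmetric on species, hence in each case the relevant duoidal category is $\diamond$-symmetric and its symmetry lifts to the category of $\star$-comonoids.
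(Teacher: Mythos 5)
Your proposal is correct and follows exactly the paper's route: the paper derives this theorem by applying Theorem~\ref{thm:sweedlerduoidal} to the two duoidal structures of Lemma~\ref{prop:duoidVSym1}, just as in the graded cases (Theorems~\ref{thm:CauchyHadGrdEnr} and~\ref{thm:HadCauchyGrdEnr}). Your verification of the hypotheses (local presentability of $\Sp(\ca{V})$, accessibility of $\star$, biclosedness of $\diamond$) and the identification of the resulting (co)monoid categories match the paper's intended argument, which it leaves implicit.
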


The induced actions of the comonoids on monoids, as well as all related Sweedler Theory functors, are analogous to those for graded objects, e.g. \cref{eq:convHadCauchy,eq:convCauchyHad}.

Moreover, the pointwise product also forms duoidal structures with the substitution product.
The following result generalizes \cite[Ex.~6.23]{Species} which establishes the $(\circ,\otimes)$-duoidal structure for graded 
vector spaces.

\begin{lemma}\label{prop:duoidVGrd2}
If $\ca{V}$ is a symmetric monoidal category with coproducts which are preserved by $\ot$ in both variables, $(\VGrd,\circ,\X,\otimes,\B{I})$ is a duoidal category.
\end{lemma}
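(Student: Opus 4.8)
The plan is to follow the pattern of Lemma~\ref{prop:duoidVGrd1}: exhibit the interchange law and the three maps of~\eqref{eq:deltamuiota} explicitly, and then verify the axioms of Definition~\ref{def:duoidal} componentwise in $\mathbb{N}$, where they become finitary manipulations of coproducts. Using that $\otimes$ preserves coproducts in each variable together with the formulas~\eqref{eq:Had} and~\eqref{eq:subVGrd}, both $\bigl((A\otimes B)\circ(C\otimes D)\bigr)_n$ and $\bigl((A\circ C)\otimes(B\circ D)\bigr)_n$ split as sums indexed by composition data: the former over tuples $(m;n_1,\dots,n_m)$ with $n_1+\dots+n_m=n$, with summand $A_m\otimes B_m\otimes C_{n_1}\otimes D_{n_1}\otimes\dots\otimes C_{n_m}\otimes D_{n_m}$, and the latter over \emph{pairs} of such tuples. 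The interchange law
\[
 \zeta_{A,B,C,D}\colon (A\otimes B)\circ(C\otimes D)\longrightarrow (A\circ C)\otimes(B\circ D)
\]
is then the coproduct injection of the ``diagonal'' summands (equal $m$'s, equal $n_i$'s), postcomposed with the symmetry of $\ca{V}$ that reshuffles $A_m\otimes B_m\otimes C_{n_1}\otimes D_{n_1}\otimes\dots$ into $A_m\otimes C_{n_1}\otimes\dots\otimes B_m\otimes D_{n_1}\otimes\dots$; exactly as in~\eqref{inter1} it is the inclusion of a smaller sum into a larger one, and its naturality in all four variables is immediate from this componentwise description.

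The remaining structure maps are: $\delta_{\X}\colon\X\to\X\otimes\X$, which is the unitor isomorphism $I\cong I\otimes I$ in degree $1$ and the identity of $0$ elsewhere; $\mu_{\B{I}}\colon\B{I}\circ\B{I}\to\B{I}$, which in each degree $n$ is the fold map $\nabla$ of the copies of $I$ indexed by the compositions of $n$; and $\iota\colon\X\to\B{I}$, the identity in degree $1$ and the unique map out of the initial object elsewhere. With these in hand, every duoidal axiom --- that $(\otimes,\B{I})$ is lax $\circ$-monoidal and that the associativity and unit constraints of $(\otimes,\B{I})$ are $\circ$-monoidal transformations --- becomes an equality between two morphisms of sums whose constituents are coproduct injections, symmetries of $\ca{V}$, fold maps and maps out of $0$; so it suffices to check it on each summand, where it reduces to symmetric monoidal coherence in $\ca{V}$ and the distributivity of $\otimes$ over coproducts. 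This runs completely parallel to the verification behind~\eqref{inter1}, and is in fact slightly cleaner, since $\mathbb{N}$ being discrete means no coend or shuffle bookkeeping intervenes; alternatively one could try to deduce the statement from the $(\bullet,\otimes)$-duoidal structure of Lemma~\ref{prop:duoidVGrd1} via the expression~\eqref{eq:subwithHadCau} of $\circ$ in terms of $\bullet$ and $\otimes$, but the direct check is shorter to present.

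The one step that is genuinely more than routine is the axiom expressing compatibility of $\zeta$ with the associativity constraint of $\circ$. Whereas the associator for the Cauchy product merely reassociates a tensor, the associator $(A\circ B)\circ C\cong A\circ(B\circ C)$ for substitution already carries a nontrivial reindexing --- substituting compositions into the parts of a composition, i.e. passing between two-level trees --- so one must set up the indexing bijections carefully enough that ``first interchange, then reassociate'' and ``first reassociate, then interchange on the requisite copies'' land on the same summand and the same permutation of tensor factors. Once those bijections are pinned down this is pure bookkeeping, and it is the only place where the argument does not simply read off from symmetric coherence and distributivity. The statement generalises \cite[Ex.~6.23]{Species} from graded vector spaces, and the same argument carries over to species (with the shuffle data added, exactly as in the sketch of Lemma~\ref{prop:duoidVSym1}).
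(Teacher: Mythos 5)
Your proposal matches the paper's proof essentially verbatim: the paper's sketch exhibits exactly the same interchange law (the symmetry followed by the inclusion of the diagonal summands of the smaller sum into the larger one, as in~\eqref{eq:hardinterchange}) and the same three structure maps $\delta_\X$, $\mu_\I$, $\iota$ from~\eqref{eq:reststructurescircotimes}, and likewise leaves the axiom verification as a componentwise check using symmetric coherence and distributivity of $\otimes$ over coproducts. Your extra remark about the care needed for the compatibility of $\zeta$ with the $\circ$-associator is a fair observation that the paper's sketch glosses over, but it does not change the argument.
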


\begin{proof}[Sketch]
 Recall that the pointwise and substitution monoidal products of graded objects in $\ca{V}$ are \cref{eq:Had,eq:subVGrd}. The interchange law
 \begin{equation}\label{eq:hardinterchange}
  \begin{tikzcd}[row sep=.15in]
(V\otimes W)\circ (U\otimes Z)\ar[d,phantom,"{{\cong}}"] \\
\displaystyle\sum_{m\geq0}V_m\otimes W_m\ot\displaystyle\sum_{n_1+\ldots+n_m=n}U_{n_1}\ot Z_{n_1}\ot\ldots \ot U_{n_m}\ot Z_{n_m}\ar[d,shift 
left,hook] \\
\displaystyle\sum_{k\geq0}V_{k}\otimes \displaystyle\sum_{n_1+\ldots+n_{k}=n}U_{n_1}\ot\ldots \ot U_{n_{k}} \ot \displaystyle\sum_{\ell\geq0} 
W_{\ell}\ot \displaystyle\sum_{n'_1+\ldots +n'_{\ell}=n} Z_{n'_1}\ot\ldots\ot Z_{n'_{\ell}}
\ar[d,phantom,"\cong"] \\
(V\circ U)\ot(W\circ Z)
  \end{tikzcd}
 \end{equation}
is again the natural inclusion from a smaller to a larger sum, namely identity when $k=\ell=m$ and $n_i=n'_j$, using symmetry and the fact that $\ot$ preserves coproducts.
The rest structure maps are almost identical to \cref{eq:reststructuresbulletotimes} by now using the $\circ$-monoidal unit $\X$ \cref{eq:unitsub} instead of $\1$: 
\begin{align}
 \delta_\X&\colon\X\to\X\ot\X\textrm{ by }(\delta_\X)_1\colon I\cong I\ot I\textrm{ and }\id_0\textrm{ elsewhere}\nonumber \\
 \mu_\I &\colon \I\circ \I\to \I\textrm{ by }(\mu_\I)_n\colon\sum_{\substack{m\geq0 \\ n_1{+}\ldots{+}n_m=n}}I\ot I\xrightarrow{\nabla}I\nonumber \\
 \iota & \colon\X\to\I\textrm{ by }\iota_1\colon I\xrightarrow{\id}I\textrm{ and }\iota_n\colon 0\xrightarrow{!}I\textrm{ for }n\neq1 \label{eq:reststructurescircotimes}
\end{align}
\end{proof}

The above proof naturally extends to the context of $\ca{V}$-species, generalizing \cite[\S~B.6.5]{Species}.

\begin{lemma}\label{prop:duoidVSym2}
 If $\ca{V}$ is a symmetric monoidal category with all colimits preserved by $\ot$ in both entries, then $(\VSym,\circ,\X,\otimes,\B{I})$ is a duoidal category.
\end{lemma}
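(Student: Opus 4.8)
The plan is to parallel the proof of \cref{prop:duoidVGrd2} for graded objects, inserting the coend over $\ca{P}$ and the shuffle data exactly as in the passage from \cref{prop:duoidVGrd1} to \cref{prop:duoidVSym1}. Recall that the pointwise product is $(A\otimes B)_n = A_n\otimes B_n$ with the diagonal $P_n$-action, while substitution is $(A\circ B)_n = \int^{m\in\ca{P}} A_m\otimes B^{\bullet m}_n$ with $B^{\bullet m}_n = \sum_{\substack{n_1+\ldots+n_m=n\\\mathrm{Sh}(n_1,\ldots,n_m)}} B_{n_1}\otimes\ldots\otimes B_{n_m}$ as in \cref{eq:nKellysub,mCauchyKelly}. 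Since $\otimes$ is cocontinuous in each variable it commutes with all the coends and coproducts defining $\circ$ and $\bullet$, which is what makes every morphism below well defined.

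First I would construct the interchange law $\zeta\colon (A\otimes B)\circ(C\otimes D)\to (A\circ C)\otimes(B\circ D)$. Unwinding the left-hand side through \cref{eq:nKellysub} gives $\int^{m} A_m\otimes B_m\otimes\sum_{\substack{n_1+\ldots+n_m=n\\\mathrm{Sh}}}C_{n_1}\otimes D_{n_1}\otimes\ldots\otimes C_{n_m}\otimes D_{n_m}$, while the $n$-th component of the right-hand side is $\big(\int^{k}A_k\otimes C^{\bullet k}_n\big)\otimes\big(\int^{\ell}B_\ell\otimes D^{\bullet \ell}_n\big)$. As with the unlabelled inclusion in \cref{eq:hardinterchange}, $\zeta$ is the ``diagonal'' map: it injects the $m$-th coend summand into the $(k,\ell)=(m,m)$ component, selects the common decomposition $n_1,\ldots,n_m$ for both tensor factors, reorders the factors using the symmetry of $\ca{V}$, and carries the same shuffle permutation along in each of the two coend factors. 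This last point is the only ingredient absent from the graded case, and it is the identical bookkeeping used for $\bullet$ in \cref{prop:duoidVSym1}; one checks that the assignment is compatible with the $P_m$-actions being coequalised, so it descends to the coends by their universal property. The remaining structure morphisms $\delta_\X\colon\X\to\X\otimes\X$, $\mu_\I\colon\I\circ\I\to\I$ and $\iota\colon\X\to\I$ are literally those of \cref{eq:reststructurescircotimes}, since the $P_n$-actions on $\X_1=I$ and on each $\I_n=I$ are trivial.

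It then remains to verify the axioms of \cref{def:duoidal}, i.e. that $\zeta$ and the maps \cref{eq:reststructurescircotimes} exhibit $(\otimes,\B{I})$ as a lax $\circ$-monoidal functor for which the associativity and unit constraints of $(\otimes,\B{I})$ are $\circ$-monoidal natural transformations (equivalently, that $(\circ,\X)$ is oplax $\otimes$-monoidal with $\otimes$-monoidal constraints). Each such axiom is an equality of morphisms which, once both sides are written out via the coend formulas \cref{eq:nKellysub,mCauchyKelly}, reduces summand-by-summand to the corresponding identity already verified for graded objects in \cref{prop:duoidVGrd2}, together with the iterated-shuffle identity of \cref{mCauchyKelly} expressing compatibility of shuffle permutations with the nested decompositions $n_i = \sum_j n_{ij}$ --- precisely the compatibility exploited in \cref{prop:duoidVSym1}. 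No genuinely new phenomenon appears.

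The main obstacle is therefore purely combinatorial: carrying the shuffle permutations through the associativity axioms and confirming at each step that the maps respect the $\ca{P}$-actions coequalised by the coends. This is tedious but routine, being the same bookkeeping as in \cref{prop:duoidVSym1}; alternatively one can sidestep part of it by adopting the finite-set presentation of species, where substitution is indexed by set partitions as in \cref{eq:positivespeciessub} and the coend is absorbed into the partition sum, rendering the comparison with \cref{prop:duoidVGrd2} more transparent.
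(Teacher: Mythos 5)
Your construction matches the paper's own sketch: the interchange law is the same diagonal inclusion of the single coend $\int^m$ into the double coend $\int^{k,\ell}$ at $(k,\ell)=(m,m)$, using the symmetry and cocontinuity of $\otimes$ and checking descent to the coend, with $\delta_\X$, $\iota$ as in \cref{eq:reststructurescircotimes} and $\mu_\I$ adapted to the coend-indexed colimit. The proposal is correct and takes essentially the same approach as the paper.
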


\begin{proof}[Sketch]
 The relevant structures are as in \cref{eq:Had,eq:nKellysub}. The interchange law now is
 \begin{displaymath}
\begin{tikzcd}[row sep=.15in]
(A\otimes B)\circ (C\otimes D)\ar[d,phantom,"{{\cong}}"] \\
\displaystyle\int^m\sum_{\substack{n_1{+}\ldots{+}n_m=n \\ \mathrm{Sh}(n_1,\ldots,n_m)}}A_m\otimes B_m\ot C_{n_1}\ot D_{n_1}\ot\ldots \ot C_{n_m}\ot D_{n_m}\ar[d,shift 
left,hook] \\
\displaystyle\int^{k,\ell}\sum_{\substack{n_1{+}\ldots{+}n_k=n \\ n'_1{+}\ldots +n'_{\ell}=n}}\sum_{\substack{\mathrm{Sh}(n_1,\ldots,n_k) \\\mathrm{Sh}(n'_1,\ldots,n'_\ell)}}A_k\otimes C_{n_1}\ot\ldots \ot D_{n_k}\ot  
B_{\ell}\ot D_{n'_1}\ot\ldots\ot D_{n'_{\ell}}
\ar[d,phantom,"\cong"] \\
(A\circ C)\ot(B\circ D)
\end{tikzcd}
\end{displaymath}
where symmetry as well as $\ot$ preserving all colimits in both variables is employed. The rest of the structures are similar to \cref{eq:reststructurescircotimes}, namely $\delta_\X$ and $\iota$ are the same whereas $\mu_\I$ is again the universal arrow from the colimit induced by identities.
\end{proof}

\begin{rmk}
 As explained in \cref{sec:subst-mono-prod}, substitution does not form a symmetric monoidal structure for neither graded objects not species, and also it is only left closed \cref{eq:subleftclosed}. As a result, \cref{thm:sweedlerduoidal} does not apply like in the earlier cases; however, we do obtain an enrichment of $\Grd(\Mon(\ca{V}))$ in the \emph{reverse} $\circ$-monoidal category $\Grd(\Comon(\ca{V}))$ and similarly for species.
 
However in particular, this duoidal structure induces a $\ot$-monoidal structure on $\circ$-monoids in both cases, as in \cref{eq:starMon}: $(\nc{Opd},\ot,\I)$ and $(\nc{sOpd},\ot,\I)$ are monoidal categories via
 \begin{displaymath}
A_m\ot B_m\ot A_{n_1}\ot B_{n_1}\ot\ldots\ot A_{n_m}\ot B_{n_m}\cong A_m\ot A_{n_1}\ot\ldots\ot A_{n_m}\ot B_m\ot B_{n_1}\ot\ldots\ot B_{n_m}\to A_{n_1+\ldots+n_m}\ot B_{n_1+\ldots+n_m}
 \end{displaymath}
see also e.g. \cite[\S~5.3.3]{AlgebraicOperads}.
\end{rmk}

At this point, the `symmetry' between the duoidal structures that arise ceases to hold. More explicitly, one would expect that when $\ca{V}$ has finite biproducts, $(\VGrd,\otimes,\B{I},\circ,\X)$ is also a duoidal category, based on the similar \cref{prop:duoidVGrd1'} and the expression of substitution in terms of the pointwise and Cauchy product. However, even though there does exist an associative interchange law in the other direction of \cref{eq:hardinterchange} namely the surjection which is identity on the same-degree components and $0$ everywhere else, the structure that fails to extend to this setting is that of $\delta_\I$, see \cref{eq:deltaI}. More precisely, the natural candidate 
\begin{equation}\label{eq:deltafail}
\delta_\I\colon\I\to\I\circ\I=\sum_{m\geq0}\;\sum_{n_1+\ldots+n_m=n}I
\end{equation} 
does \emph{not} land on a finite sum anymore, which would then be identified with a finite product due to the existence of biproducts.

As a result, for establishing the final $(\otimes,\circ)$-duoidal structure which is in fact necessary for an envisioned enrichment of operads in cooperads, we need to restrict to the setting of positive graded objects and species, where the above infinite sum reduces to a finite one, see \cref{rmk:positivegraded}. The following result generalizes the `contragredient' construction of \cite[Prop.~B.31]{Species} for vector species.

\begin{lemma}\label{prop:duoidVGrd+}
 If $\ca{V}$ is a symmetric monoidal category with finite biproducts which are preserved by $\ot$ in both variables, then $(\VGrd_+,\ot,\I,\circ,\X)$ and $(\VSp_+,\ot,\I,\circ,\X)$ are duoidal categories.
\end{lemma}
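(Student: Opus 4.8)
The plan is to construct the interchange law and the three morphisms of \cref{eq:deltamuiota} by hand, following the proof of \cref{prop:duoidVGrd1'} (and of \cref{prop:duoidVSym1} for species) with the Cauchy product $\bullet$ everywhere replaced by substitution $\circ$ and the unit $\1$ replaced by $\X$. As the remark preceding the statement makes clear, the passage to the positive subcategories serves a single purpose: on $\VGrd_+$ and $\VSp_+$ the coproduct computing $V\circ W$ is \emph{finite} --- over compositions $n_1+\dots+n_m=n$ with all $n_i\ge 1$, so that $m\le n$ (respectively over the finitely many partitions of a finite set, cf. \cref{eq:positivespeciessub}) --- so that $\circ$ is available already under the finite-biproduct hypothesis and, crucially, each $(\I\circ\I)_n$ is a finite biproduct.

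First I would record that $\circ=\circ'$ on both $\VGrd_+$ and $\VSp_+$ by \cref{rmk:positivegraded}, and that $\I$ (constant at the $\ca{V}$-unit in degrees $>0$, zero in degree $0$) and $\X$ are the units of $\ot$ and $\circ$ restricted to the positive part. Then I would define the structure as follows. The interchange $\zeta\colon(V\circ U)\ot(W\circ Z)\to(V\ot W)\circ(U\ot Z)$ is the canonical map out of a finite biproduct whose component indexed by a pair of compositions is the identity onto the summand of the target indexed by the common composition when the two agree --- after using the symmetry of $\ca{V}$ to rearrange the tensor factors into the order $V_m\ot W_m\ot U_{n_1}\ot Z_{n_1}\ot\cdots$ --- and the zero map otherwise; this is the surjection mentioned in the remark, well defined because $\ot$ preserves coproducts. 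The morphism $\delta_\I\colon\I\to\I\circ\I$ has $n$-component the diagonal $I\xrightarrow{\Delta}\prod I\cong(\I\circ\I)_n$ into the now-finite product over compositions of $n$. Finally $\mu_\X\colon\X\ot\X\to\X$ is $I\ot I\cong I$ in degree $1$ and zero elsewhere, and $\iota\colon\I\to\X$ is $\id_I$ in degree $1$ and the unique map $I\to 0$ in degrees $\ge 2$; these last two are \cref{eq:reststructurescircotimes} read off on the positive subcategory.

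Finally I would verify the duoidal axioms. Distributing $\ot$ over the finite coproducts turns naturality of $\zeta$ and its associativity and unit coherences into routine matching of indexing data, exactly as in \cref{prop:duoidVGrd1'}; the conditions that the associativity and unit constraints of $\circ$ be $\ot$-monoidal natural transformations go the same way; and the axioms involving $\delta_\I$ reduce to coassociativity and counitality of the diagonal $\Delta$ together with its compatibility with the biproduct projections. The species version is obtained by repeating the computation while carrying along the shuffle data and the $P_n$-actions, exactly as in the step from \cref{prop:duoidVGrd2} to \cref{prop:duoidVSym2}; alternatively one presents it as the contragredient of \cref{prop:duoidVSym2} along the lines of \cite[Prop.~B.31]{Species}. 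The one genuine obstacle is the construction of $\delta_\I$: outside the positive world the indexing coproduct $\sum_{m\ge 0}\sum_{n_1+\dots+n_m=n}I$ is infinite, hence not a product and admitting no diagonal; once positivity repairs this, every remaining check is the bookkeeping already carried out in the Hadamard--Cauchy cases.
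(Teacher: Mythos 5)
Your proposal is correct and follows essentially the same route as the paper's own sketch: identify $\circ=\circ'$ on the positive subcategories so that the relevant coproducts become finite biproducts, take the interchange law to be the canonical surjection of \cref{eq:interpos}, define $\delta_\I$ as the diagonal into the now-finite product, and take $\mu_\X$ and $\iota$ dual to \cref{eq:reststructurescircotimes}. You also correctly isolate the failure of $\delta_\I$ in the non-positive setting as the sole reason for restricting to $\VGrd_+$ and $\VSp_+$, exactly as the paper does in the paragraph preceding the lemma.
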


\begin{proof}[Sketch]
Recall that positive $\ca{V}$-graded objects are those $\{P_n\}_\mathbb{N}$ where $P_n=0$, the initial object. In that case, as discussed earlier, the infinite sum of the substitution formula \cref{eq:subVGrd} for any $m\geq0$ reduces to a finite one since now $m>n$ could only return zeros from the decomposition of $n$ into a sum of non-zero natural numbers:
\begin{displaymath}
 (P\circ Q)_n=\sum_{\substack{1\leq m\leq n \\ n_1{+}\ldots {+}n_m{=}n}}P_m\ot Q_{n_1}\ot\ldots\ot Q_{n_m}\cong\prod_{\substack{1\leq m\leq n \\ n_1{+}\ldots {+}n_m{=}n}}P_m\ot Q_{n_1}\ot\ldots\ot Q_{n_m}=(P\circ'Q)
\end{displaymath}
due to the existence of finite biproducts and the fact that $\ot$ preserves them. Now the interchange law $(P\circ Q)\ot(R\circ S)\to(P\ot R)\circ (Q\ot S)$ is the analogous surjection as in \cref{inter2}, or equivalently the surjection of a larger product onto a smaller one:
\begin{equation}\label{eq:interpos}
{\prod^{\substack{1\leq k\leq n \\ 1\leq \ell\leq n}}_{\substack{ n_1{+}\ldots{+}n_k=n \\  n'_1{+}\ldots{+}n'_\ell=n}}}
P_k\otimes Q_{n_1}\ot\ldots\ot Q_{n_k}\ot R_{\ell}\ot S_{n'_1}\ot\ldots \ot S_{n'_\ell}{\twoheadrightarrow}
{\prod_{\substack{1\leq m\leq n \\ n_1{+}\ldots{+}n_m=n}}}
P_m\otimes R_m\ot Q_{n_1}\ot S_{n_1}\ot\ldots\ot Q_{n_m}\ot S_{n_m}
\end{equation}
The rest of the structure maps are dual to \cref{eq:reststructurescircotimes}, namely 
\begin{align}
 \mu_\X &\colon \X\ot \X\to \X\textrm{ by }(\mu_\X)_1\colon I\ot I\cong I \textrm{ and }\id_0\textrm{ elsewhere}\nonumber \\
 \delta_\I & \colon \I\to \I\circ\I\textrm{ by } I\xrightarrow{\Delta}\prod_{\substack{1\leq m\leq n \\ n_1{+}\ldots{+}n_m=n}}I\ot I\nonumber \\
  \iota & \colon\I\to\X\textrm{ by }\iota_1\colon I\xrightarrow{\id}I\textrm{ and }\iota_n\colon I\xrightarrow{!}0\textrm{ for }n\neq1 \label{eq:reststructuresotimescirc}
\end{align}

In the case of positive $\ca{V}$-species, using the formula \cref{eq:positivespeciessub} and the existence of finite biproducts, we can also write the interchange law and the rest of structure maps very similarly to the above ones.
\end{proof}

Following the pattern, having established a $(\ot,\circ)$-duoidal structure on positive graded objects and species, there is an enrichment of $\circ$-monoids in $\circ$-comonoids namely of positive operads in positive coperads. As discussed earlier, positive symmetric and non-symmetric operads are sometimes taken as the main concept of operads, an approached followed for example in \cite{Operadsinalgtopphy,ChingGoodwillie}.

\begin{thm}\label{thm:HadSubGrdEnr}
Suppose that $\ca{V}$ is a locally presentable, symmetric monoidal closed category with finite biproducts. The category $\nc{Opd}_+(\ca{V})$ of positive operads in $\ca{V}$ is tensored and cotensored enriched in the symmetric monoidal closed category $(\nc{Coopd}_+(\ca{V}),\ot,\I)$ of positive cooperads in $\ca{V}$ with the pointwise tensor product.  Moreover, $\nc{sOpd}_+(\ca{V})$ of positive symmetric operads is tensored and cotensored enriched in $(\nc{sCoopd}_+(\ca{V}),\ot,\I)$ of positive cooperads.
\end{thm}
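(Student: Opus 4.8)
The plan is to deduce the statement as an instance of \cref{thm:sweedlerduoidal}, applied to the two duoidal categories produced by \cref{prop:duoidVGrd+}, namely $(\VGrd_+,\ot,\I,\circ,\X)$ and $(\VSp_+,\ot,\I,\circ,\X)$, with $\diamond=\ot$ the pointwise tensor product and $\star=\circ$ the substitution product. Once the hypotheses of that theorem are in place, it delivers a tensored and cotensored $\Comon_\circ(-)$-category whose underlying category is $\Mon_\circ(-)$, and the remaining task is purely one of translating into the vocabulary of (co)operads.

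First I would verify the three standing hypotheses of \cref{thm:sweedlerduoidal} for the base category $\VGrd_+$, the case $\VSp_+$ differing only in that one replaces $\mathbb{N}$ by finite sets and the substitution formula \cref{eq:subVGrd} by its species version \cref{eq:positivespeciessub}. Local presentability: by \cref{rmk:positivegraded} the category $\VGrd_+$ is (equivalent to) the functor category $[\mathbb{N}_{\geq1},\ca{V}]$ from a small category into the locally presentable $\ca{V}$, hence itself locally presentable; likewise for $\VSp_+$. Biclosedness of the pointwise monoidal structure $(\ot,\I)$: the left and right internal homs are computed componentwise, $[V,W]^\ot_n=[V_n,W_n]$, so they exist because $\ca{V}$ is biclosed, and they are again positive. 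Accessibility of $\star=\circ$: this is the one spot where the restriction to positive objects is genuinely used, since by \cref{rmk:positivegraded} the substitution sum \cref{eq:subVGrd} (respectively \cref{eq:positivespeciessub}) collapses to a \emph{finite} coproduct of iterated $\ot$-products in the positive setting; as $\ot$ preserves colimits in each variable ($\ca{V}$ being closed) and finite coproducts commute with filtered colimits, $\circ$ preserves filtered colimits, i.e.\ is accessible.

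With the hypotheses checked, \cref{thm:sweedlerduoidal} produces a tensored and cotensored $\Comon_\circ(\VGrd_+)$-category with underlying category $\Mon_\circ(\VGrd_+)$. By the identifications established in \cref{sec:subst-mono-prod}, $\Mon_\circ(\VGrd_+)=\nc{Opd}_+(\ca{V})$ and $\Comon_\circ(\VGrd_+)=\nc{Coopd}_+(\ca{V})$, and the monoidal structure on the latter that does the enriching is the pointwise $\ot$ lifted from the duoidal interchange, i.e.\ $(\nc{Coopd}_+(\ca{V}),\ot,\I)$. This lifted structure is symmetric, since $(\VGrd_+,\ot,\I)$ is symmetric and the duoidal category is $\ot$-symmetric so the symmetry transports to $\Comon_\circ$, and it is biclosed, that being exactly the input used inside the proof of \cref{thm:sweedlerduoidal}. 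This gives the first assertion; running the same argument with $\VSp_+$ in place of $\VGrd_+$ gives the second, namely the enrichment of $\nc{sOpd}_+(\ca{V})=\Mon_\circ(\VSp_+)$ in $(\nc{sCoopd}_+(\ca{V}),\ot,\I)$.

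I do not expect a real obstacle: once \cref{prop:duoidVGrd+} and \cref{thm:sweedlerduoidal} are available, this is essentially bookkeeping. The step that deserves care is the accessibility of $\circ$ on positive (co)operads, together with the verification that the positive subcategory is stable under $\ot$, under $\circ$, and under the pointwise internal hom; all of these reduce to the finiteness of the substitution sums once the $0$-component has been trivialised, which is precisely what \cref{rmk:positivegraded} records.
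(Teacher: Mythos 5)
Your proposal is correct and follows essentially the same route as the paper: verify local presentability, biclosedness of the pointwise structure, and accessibility of $\circ$, then invoke \cref{thm:sweedlerduoidal} for the duoidal categories of \cref{prop:duoidVGrd+}. The only divergence is in the accessibility check, where you exploit the finiteness of the positive substitution sums, whereas the paper argues via the left adjoint $\mi\circ V\dashv[V,\mi]^\circ$ in one variable and the preservation of filtered colimits by $W\mapsto W^{\bullet m}$ in the other -- an argument valid even without positivity; so your assertion that positivity is ``genuinely used'' for accessibility is a slight mislocation (it is the duoidal structure itself, specifically $\delta_\I$ as in \cref{eq:deltafail}, that forces the restriction to positive objects), but this does not affect the correctness of your proof.
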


\begin{proof}
Both $\VGrd$ and $\VSp$ are locally presentable when $\ca{V}$ is.
For the $(\ot,\circ)$-duoidal structure described in \cref{prop:duoidVGrd+} under these assumptions, the pointwise product is symmetric monoidal closed. Moreover, even if the substitution is only left closed \cref{eq:subclosedVGrd,eq:subleftclosed}, it does preserve filtered colimits in both variables: $\mi\circ V$ because it has a right adjoint $[V,\mi]^\circ$, and also $V\circ\mi$ due to the fact that the endofunctor $W\mapsto W^{\bullet m}$ preserves filtered colimits 
\begin{displaymath}
 V\circ\colim_{j\in J}W_j=\colim_{m}V_m\ot (\colim_{j\in J}W_j)^{\bullet m}\cong \colim_{m}V_m\ot\colim_{j\in J}(W_j^{\bullet m})\cong\colim_{j\in J}(V\circ W_j)
\end{displaymath}
for a filtered category $J$, for both graded objects and species and in fact not necessarily positive. 

All conditions of \cref{thm:sweedlerduoidal} are satisfied and the result follows.
\end{proof}
In particular, if $W,Z\in\VCoopdp$, then $W\ot Z\in\VCoopdp$ via
\begin{displaymath}
\begin{tikzcd}[row sep=.1in]
(W\ot Z)_n=W_n\ot Z_n\ar[r,"\delta_n\ot\delta_n"]\ar[ddr,dashed,bend right] & \displaystyle\sum_{\substack{m\geq0\\n_1{+}\ldots{+}n_m{=}n}}W_m\ot W_{n_1}\ot\ldots\ot W_{n_m}\ot\sum_{\substack{k\geq0\\n'_1{+}\ldots{+}n'_k{=}n}}Z_k\ot Z_{n'_1}\ot\ldots\ot Z_{n'_k}\ar[d,"\cref{eq:interpos}"] \\
& \displaystyle\sum_{\substack{\ell\geq0 \\ n_1{+}\ldots{+}n_\ell=n}}W_\ell\ot Z_\ell\ot W_{n_1}\ot Z_{n_1}\ot\ldots\ot W_{n_\ell}\ot Z_{n_\ell}\ar[d,equal] \\
& \left((W\ot Z)\ot(W\ot Z)\right)_n
\end{tikzcd}
\end{displaymath}
The functors of Sweedler Theory in this context are
\begin{gather*}
[\mi,\mi]^\ot\colon\VCoopdp^\op\times \VOpdp\to \VOpdp \nonumber\\
P^\ot\colon\VOpdp^\op\times\VOpdp\to\VCoopdp \nonumber \\
\triangleright^\ot\colon \VCoopdp\times \VOpdp\to \VOpdp
\end{gather*}
where for example, the induced action $[\mi,\mi]^\ot$ of positive cooperads on positive operads signifies that if $Z\in\VCoopdp$ and $V\in\VOpdp$, $[Z,V]^\ot\in\VOpdp$ via \cref{eq:duoidalconvolution} which here becomes 
\begin{equation}\label{eq:convSubHad}
 \begin{tikzcd}[row sep=.1in,column sep=0.2in]
 (([Z,V]^\ot\circ[Z,V]^\ot)\ot Z)_n\ar[d,equal]\ar[ddddr,dashed,bend left=40] & \\
 \left(\displaystyle\sum_{\substack{m\geq0 \\ n_1{+}\ldots{+}n_m{=}n}}[Z,V]^\ot_{m}\ot[Z,V]^\ot_{n_1}\ot\ldots\ot[Z,V]^\ot_{n_m}\right)\ot Z_n\ar[d,"1\ot\delta_n"'] & \\
 \left(\displaystyle\sum_{\substack{m\geq0 \\ n_1{+}\ldots{+}n_m{=}n}}[Z_{m},V_{m}]\ot[Z_{n_1},V_{n_1}]\ot\ldots\ot[Z_{n_m},V_{n_m}]\right)\ot \displaystyle\sum_{\substack{k\geq0 \\ n'_1{+}\ldots{+}n'_{k}}}Z_{k}\ot Z_{n'_1}\ot\ldots\ot Z_{n'_k}\ar[d,"\cref{eq:interpos}"'] & \\
 \displaystyle\sum_{\substack{\ell\geq0 \\ n_1{+}\ldots{+}n_\ell{=}n}}[Z_\ell,V_\ell]\ot Z_\ell\ot[Z_{n_1},V_{n_1}]\ot Z_{n_1}\ot\ldots\ot[Z_{n_\ell},V_{n_\ell}]\ot Z_{n_\ell}\ar[d,"\sum\textrm{ev}\ot\ldots\ot\textrm{ev}"'] & \\
 \displaystyle\sum_{\substack{\ell\geq0 \\ n_1{+}\ldots{+}n_\ell{=}n}} V_\ell\ot V_{n_1}\ot\ldots V_{n_\ell}\ar[r,"\mu_n"'] & V_n
 \end{tikzcd}
\end{equation}
which is analogous to the action of graded comonoids on graded monoids \cref{eq:convCauchyHad}. For the symmetric case, the formulas are analogous using either the coend with shuffles formulation in the finite case
or the partition formulation for finite sets. Notice that this also agrees, in the appropriate context, with the \emph{convolution operad structure} of \cite[3]{HomotopyOperads}.

\begin{rmk}
Notice that even though the above structures refer to positive graded objects and species, namely where $V_0=0$ or equivalently all formulas are written for $n>0$, in particular \cref{eq:convSubHad} can be seen to work in the same way for general (non-positive) graded objects. As mentioned above, there is indeed a well-behaved interchange law using zero maps for the surjection of the infinite sum version of \cref{eq:interpos}. 

The reason is that in fact, it can be verified
that even if $(\VGrd,\ot,\circ)$ fails to be duoidal essentially due to the absence of the counit structure map \cref{eq:deltafail}, the functor $\ot$ has a $\circ$-oplax monoidal structure via the interchange law and $\mu_\X$ as above. As a result, following the proof of \cref{prop:duoidalaction} we do obtain a functor
$$
[\mi,\mi]^\ot\colon\VCoopd^\op\times\VOpd\to\VOpd
$$
whose explicit operadic structure on $[Z,V]^\ot$ is as in \cref{eq:convSubHad}, and moreover \cref{thm:measuduoidal} still applies in the same way establishing its adjoints. Notice that regarding \cref{thm:sweedlerduoidal}, the absence of $\delta_\I$ prevents $\I$ from being a $\circ$-comonoid namely a cooperad in $\ca{V}$, hence the pointwise product does \emph{not} form a monoidal structure for $\VCoopd$ therefore an enrichment cannot be materialized.

In conclusion, in this case the three first functors of Sweedler Theory \cref{thm:measuduoidal} can still be realized in the context of general, symmetric and non-symetric, operads and cooperads in a symmetric monoidal closed $\ca{V}$ which is locally presentable and has a zero object.
\end{rmk}

\begin{rmk}
There is a closely related project sketched e.g. in \cite{Aneltalk}, concerning an enrichment of operads in cooperads with the pointwise monoidal structure. The methodology differs, in that the authors consider categories of algebras for a cocommutative Hopf operad in $\ca{V}$. it is of course expected that there should be formal connections between these these two developments; we leave such considerations for future work at a later, more definite stage of the project in question.
\end{rmk}

Finally, we exhibit an enrichment of the category of symmetric operads (not necessarily positive) in
the category of symmetric cooperads that arised from a certain duoidal structure
in the category of species. This duoidal structure was constructed on
$\Sp(\Set)$ by~\cite{Commutativity} and remarked that their results remains
valid for a locally presentable cartesian closed category $\mathcal{V}$, instead
of $\Set$.

The two monoidal structures considered on $\Sp(\mathcal{V})$ are the
substitution monoidal structure and the convolution of the product, denoted by
$*$, of which we have said a few words at the end of Section~\ref{sec:conv-struct}. A construction of the interchange law
$(A\circ B)*(C\circ D)\to(A*C)\circ(B*D)$ can be found
in~\cite[Prop.~44]{Commutativity}.

\begin{thm}\label{thm:starSubSymEnr}
  Let $\mathcal{V}$ be locally presentable cartesian closed category. The
  category of symmetric operads in $\mathcal{V}$ is enriched in the monoidal
  category of symmetric cooperads in $\mathcal{V}$.
\end{thm}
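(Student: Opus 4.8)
The plan is to obtain the statement as an instance of the general \cref{thm:sweedlerduoidal}, applied to the duoidal category $(\Sp(\mathcal{V}),*,\J,\circ,\X)$ whose outer tensor product $\diamond=*$ is the convolution of the cartesian products on $\mathbb{P}$ and on $\mathcal{V}$, whose inner tensor product $\star=\circ$ is the substitution product, and whose interchange law $(A\circ B)*(C\circ D)\to(A*C)\circ(B*D)$ is the one constructed in \cite[Prop.~44]{Commutativity} (these tensor products exist on $\Sp(\mathcal{V})$ because a locally presentable cartesian closed $\mathcal{V}$ is cocomplete with $\times$ cocontinuous in each variable). Matching this data against the duoidal axioms one reads off $\Mon_\star(\Sp(\mathcal{V}))=\Mon_\circ(\Sp(\mathcal{V}))=\nc{sOpd}(\mathcal{V})$ and $\Comon_\star(\Sp(\mathcal{V}))=\Comon_\circ(\Sp(\mathcal{V}))=\nc{sCoopd}(\mathcal{V})$, the latter carrying the lifted $*$-monoidal structure, so that \cref{thm:sweedlerduoidal} yields precisely an enrichment of symmetric operads in symmetric cooperads. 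The decisive feature is that only the \emph{outer} product $*$ is required to be biclosed, whereas substitution --- which by \cref{sec:subst-mono-prod} is merely left closed --- occurs as $\star$ and need only be accessible; this is exactly the extra latitude that the duoidal framework provides over \cref{thm:STmoncats}.

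It therefore remains to check the three hypotheses of \cref{thm:sweedlerduoidal} for $(\Sp(\mathcal{V}),*,\J,\circ,\X)$. First, $\Sp(\mathcal{V})=[\mathbb{P},\mathcal{V}]$ is locally presentable, being a functor category from a small category into a locally presentable category. Second, $(*,\J)$ is biclosed: the cartesian structure on $\mathcal{V}$ is left and right closed since $\mathcal{V}$ is cartesian closed, and $\mathcal{V}$ is complete since it is locally presentable, so by the closedness criterion for Day convolution recorded after \cref{prop:duoidalpresheaves} the product $*$ is biclosed. Third, $\circ$ is accessible: the functor $-\circ A$ is even cocontinuous, having the right adjoint $[A,\mi]^\circ_\ell$ by left closedness of $(\Sp(\mathcal{V}),\circ,\X)$, while $A\circ-\cong\int^{m}A_m\times(-)^{\bullet m}$ preserves filtered colimits because in a cartesian closed category each $X\times-$ preserves colimits --- so filtered colimits commute with finite products --- whence every functor $B\mapsto B^{\bullet m}$ preserves filtered colimits, and copowers by sets and coends do so as well. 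This is the computation already used in the proof of \cref{thm:HadSubGrdEnr}, here specialised to $\ot=\times$. Consequently $\circ\colon\Sp(\mathcal{V})\times\Sp(\mathcal{V})\to\Sp(\mathcal{V})$ preserves filtered colimits in each variable and therefore as a bifunctor, so it is accessible in the sense required.

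Granting the three hypotheses, \cref{thm:measuduoidal} supplies the Sweedler hom $P\colon\nc{sOpd}(\mathcal{V})^{\op}\times\nc{sOpd}(\mathcal{V})\to\nc{sCoopd}(\mathcal{V})$ and the Sweedler product $\triangleright^*$ as parametrised adjoints of the action $[\mi,\mi]^*$ of $\nc{sCoopd}(\mathcal{V})^{\mathrm{op,rev}}$ on $\nc{sOpd}(\mathcal{V})$ from \cref{prop:duoidalaction}; the local presentability of $\Sp(\mathcal{V})$ together with the accessibility of $\circ$ make the lifted $*$-monoidal structure on $\nc{sCoopd}(\mathcal{V})$ biclosed, as in \cite[\S 3.2]{MonComonBimon}; and \cref{cor:5} then assembles these data into a $\nc{sCoopd}(\mathcal{V})$-category whose underlying category is $\nc{sOpd}(\mathcal{V})$, tensored by $c\triangleright^*a$ and cotensored by $[c,a]^*$, which in particular establishes the claimed enrichment. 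The one step that goes beyond bookkeeping is the accessibility of $\circ$, and even there the argument is the routine one already employed for \cref{thm:HadSubGrdEnr}; everything else is a matter of translating the constructions of \cite{Commutativity} into the language of \cref{sec:SweedlerTheoryduoid}.
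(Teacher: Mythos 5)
Your proposal is correct and follows essentially the same route as the paper: both verify the hypotheses of \cref{thm:sweedlerduoidal} for the duoidal category $(\Sp(\mathcal{V}),*,\J,\circ,\X)$ of \cite{Commutativity}, with $*$ as the biclosed outer product and the merely left-closed substitution $\circ$ appearing only as the inner product, where accessibility suffices. The sole (immaterial) divergence is in how accessibility of $\circ$ is justified --- you redo the filtered-colimit computation from \cref{thm:HadSubGrdEnr}, whereas the paper observes that $\circ$ is a small colimit of accessible functors and cites \cite[Prop~2.4.5]{MakkaiPare}.
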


\begin{proof}
  We have to verify Theorem~\ref{thm:sweedlerduoidal}'s hypotheses for the
  duoidal category $(\Sp(\mathcal{V}),\star,\mathbf{X},\circ,\mathbf{X})$ (both
  tensor products have the same unit, making this a normal duoidal
  category~\cite{Commutativity}). The substitution tensor product can be written
  in terms of coends, the Cauchy tensor product and pointwise tensor products
  (in this case, these are cartesian products), as seen
  in~\eqref{eq:subVSym}. Therefore, substitution is accessible
  by~\cite[Prop~2.4.5]{MakkaiPare}, since it is a small colimit of accessible
  functors. The convolution tensor product $*$ is symmetric and closed. With
  just this we have completed the proof.
\end{proof}

\appendix

\section{Pointwise monoidal structures and enrichment}
\label{sec:pointw-mono-struct}

This section addresses the relationship between the enrichment of categories of
monoids of a functor category equipped with the pointwise tensor product. We
refer to Section~\ref{sec:pointw-or-hadam} the notations used in the theorem
below.

\begin{thm}
  \label{thm:2}
  Let $\mathcal{V}$ be a locally presentable braided monoidal closed category.
  For any small category $\mathcal{J}$, the category there exists a tensored and
  cotensored $\Comon[\mathcal{J},\mathcal{V}]$-category $\mathsf{M}$ with
  underlying category $\Mon[\mathcal{J},\mathcal{V}]$. Furthermore, for each
  $j\in \mathcal{J}$, there is a full and faithful $\Comon(\mathcal{V})$-functor
  $(E_j)_*\mathsf{M}\to\mathsf{A}$ given by on objects by
  \begin{equation}
    \label{eq:13}
    (A\in\Mon[\mathcal{J},\mathcal{V}])\mapsto (A(j)\in\Mon(\mathcal{V})).
  \end{equation}
\end{thm}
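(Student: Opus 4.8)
The plan is to obtain the enrichment by applying the braided Sweedler theory of \cref{thm:STmoncats} to the functor category $[\mathcal{J},\mathcal{V}]$ with its \emph{pointwise} tensor product, and then to transport it along evaluation. First I would record that $[\mathcal{J},\mathcal{V}]$, with $(F\otimes G)(j)=F(j)\otimes G(j)$ and unit the constant functor at $i$, is a locally presentable braided monoidal closed category: local presentability is automatic since $\mathcal{J}$ is small and $\mathcal{V}$ is locally presentable; the braiding is pointwise; and closedness holds because, for each $G$, the functor $-\otimes G$ preserves all colimits (colimits in $[\mathcal{J},\mathcal{V}]$ being pointwise and $-\otimes G(j)$ cocontinuous since $\mathcal{V}$ is closed) and hence has a right adjoint, the internal hom $[G,H](j)=\int_{k}[\,\mathcal{J}(j,k)\cdot G(k),\,H(k)\,]$. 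Then \cref{thm:STmoncats} produces a tensored and cotensored $\Comon[\mathcal{J},\mathcal{V}]$-category $\mathsf{M}$ with underlying category $\Mon[\mathcal{J},\mathcal{V}]$, whose hom-objects are the universal measuring comonoids in $[\mathcal{J},\mathcal{V}]$; reading this off through the monoidal isomorphisms $\Mon[\mathcal{J},\mathcal{V}]\cong[\mathcal{J},\Mon(\mathcal{V})]$ and $\Comon[\mathcal{J},\mathcal{V}]\cong[\mathcal{J},\Comon(\mathcal{V})]$ of \cref{sec:pointw-or-hadam} gives the first assertion.

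Next, fix $j\in\mathcal{J}$. The evaluation $E_j\colon[\mathcal{J},\mathcal{V}]\to\mathcal{V}$ strictly preserves the pointwise tensor, unit and braiding, so it induces a strict monoidal functor $\Comon[\mathcal{J},\mathcal{V}]\to\Comon(\mathcal{V})$ and a change-of-base functor $(E_j)_*$ on enriched categories, with $(E_j)_*\mathsf{M}$ having the same objects as $\mathsf{M}$ and hom-objects $(E_j)_*\mathsf{M}(A,B)=\mathsf{M}(A,B)(j)$. I would define $F\colon(E_j)_*\mathsf{M}\to\mathsf{A}$ by $A\mapsto A(j)$ on objects; its hom-components are then forced. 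Indeed, the universal measuring morphism $\phi(A,B)\colon\mathsf{M}(A,B)\otimes A\to B$ in $[\mathcal{J},\mathcal{V}]$ has $j$-component $\phi(A,B)_j\colon\mathsf{M}(A,B)(j)\otimes A(j)\to B(j)$, and this is a measuring morphism in $\mathcal{V}$---the diagrams of \cref{df:2} commute in $[\mathcal{J},\mathcal{V}]$ precisely when they commute at each object, since all the monoid, comonoid and interchange data are pointwise---so it is classified by a unique comonoid morphism $F_{A,B}\colon\mathsf{M}(A,B)(j)\to\mathsf{A}(A(j),B(j))$. Compatibility of the $F_{A,B}$ with composition and identities is a routine check: both compositions transpose to the same composites of measurings and action counits as in \eqref{eq:19} and \eqref{eq:21}, which $E_j$ preserves.

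The main obstacle is to show that each $F_{A,B}$ is an isomorphism, i.e.\ that $\mathsf{M}(A,B)(j)\cong\mathsf{A}(A(j),B(j))$ in $\Comon(\mathcal{V})$; this is the place where the shape of $\mathcal{J}$ really enters. I would test against an arbitrary comonoid $D$: the left adjoint $L_j=\mathcal{J}(j,-)\cdot(-)$ of $E_j\colon[\mathcal{J},\Comon(\mathcal{V})]\to\Comon(\mathcal{V})$ (the copower formed in the cocomplete category $\Comon(\mathcal{V})$) together with the universal property of $\mathsf{M}(A,B)$ yields
\[
\Comon(\mathcal{V})\bigl(D,\mathsf{M}(A,B)(j)\bigr)\;\cong\;\Comon[\mathcal{J},\mathcal{V}]\bigl(L_jD,\mathsf{M}(A,B)\bigr)\;\cong\;\mathrm{Meas}_{[\mathcal{J},\mathcal{V}]}\bigl(L_jD;A,B\bigr),
\]
so one is reduced to identifying the measuring morphisms out of the copower $\mathcal{J}(j,-)\cdot D$ with the measuring morphisms $D\otimes A(j)\to B(j)$ in $\mathcal{V}$, naturally in $D$, and to recognising this bijection as the one induced by $\phi(A,B)_j$; then Yoneda in $\Comon(\mathcal{V})$ concludes. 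This identification is the technical heart of the argument, and it becomes transparent when $\mathcal{J}(j,-)$ is supported on the isomorphism class of $j$---in particular for $\mathcal{J}=\mathbb{N}$ and $\mathcal{J}=\mathbb{P}$, where it recovers \cref{thm:HadGrdEnr}---since then a measuring out of $\mathcal{J}(j,-)\cdot D$ is freely and uniquely determined by one out of $D$ and the action of the automorphisms of $j$.
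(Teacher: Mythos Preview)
Your construction of $\mathsf{M}$ agrees with the paper's: both apply \cref{thm:STmoncats} to $[\mathcal{J},\mathcal{V}]$ with the pointwise monoidal structure. For full faithfulness, however, the routes diverge. You work with a single adjoint, the left adjoint $\mathcal{J}(j,-)\cdot(-)\dashv E_j$ on the comonoid side, and reduce to identifying measurings out of this copower with measurings in $\mathcal{V}$. The paper instead brings in a \emph{second} adjoint, the right adjoint of $E_j$ on the monoid side (extension by the terminal monoid, called $L_j$ there), and checks directly that the square of left adjoints commutes: $L_j[c,B(j)]\cong[D_j(c),B]$ holds componentwise, both sides being $[c,B(j)]$ at $j$ and terminal elsewhere. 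The square of right adjoints then commutes as well, giving $E_j\mathsf{M}(-,B)\cong\mathsf{A}(-,B(j))\,E_j$ without ever unpacking measurings; the compatibility with composition and identities becomes formal once one knows $E_j$ is strict monoidal and strict closed. This two-adjoint mates argument is cleaner than your one-adjoint reduction.

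Your caution about general $\mathcal{J}$ is well placed, and in fact applies to the paper's argument too. The paper tacitly uses that the internal hom in $[\mathcal{J},\mathcal{V}]$ is pointwise and that $D_j$, $L_j$ are the naive extensions by $0$ and $1$; both hold only for groupoids (with the obvious adjustment for nontrivial automorphism groups). For a non-groupoid such as $\mathcal{J}=\{0\to 1\}$, a measuring out of the copower $L_0D$ carries compatibility data at both objects, and one cannot expect $\mathsf{M}(A,B)(0)\cong\mathsf{A}(A(0),B(0))$. So your restriction to the case where $\mathcal{J}(j,-)$ is concentrated on the isomorphism class of $j$ is not a defect of your method but the actual scope of the result; it covers $\mathbb{N}$ and $\mathbb{P}$ and hence \cref{thm:HadGrdEnr}.
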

\begin{proof}
  The category $[\mathcal{J},\mathcal{V}]$ is locally presentable by
  \cite[Thm.~5.1.6]{MakkaiPare} plus cocompletness. Its tensor product is
  accessible since its composition with each evaluation functor $E_j$ is
  accessible, and these are jointly conservative
  \cite[Prop.~2.4.10]{MakkaiPare}. We can apply Theorem~\ref{thm:STmoncats} to
  $[\mathcal{J},\mathcal{V}]$ to obtain the enriched category $\mathsf{M}$ of
  the statement.

  The internal hom in $[\mathcal{J},\mathcal{V}]$ is defined pointwise from that
  of $\mathcal{V}$. In other words, the functors $E_j$ are strict closed: $[A,B](j)=[A(j),B(j)]$.
  
  We wish to exhibit an isomorphism between the two functors
  $\mathbf{Mon}[\mathcal{J},\mathcal{V}]^{\mathrm{op}}\to\Comon(\mathcal{V})$ in
  the following diagram.
  \begin{equation}
    \label{eq:113}
    E_j\mathsf{M}(-,B)\cong
    \mathsf{A}(-,B(j))E_j
  \quad\qquad
    \begin{tikzcd}[column sep=large,row sep=large]
      \Comon[\mathcal{J},\mathcal{V}]
      \ar[r,"{[\mi,B]}",shift left = 7pt]
      \ar[r,phantom,"\perp"]
      \ar[d,"E_j"']
      &
      \Mon[\mathcal{J},\mathcal{V}]^{\mathrm{op}}
      \ar[d,"E_j^{\mathrm{op}}"]
      \ar[l,"{\mathsf{M}(-,B)}",shift left = 7pt]
      \\
      \Comon(\mathcal{V})
      \ar[r,"{[\mi,B(j)]}",shift left = 7pt]
      \ar[r,phantom,"\perp"]
      &
      \Mon(\mathcal{V})^{\mathrm{op}}
      \ar[l,"{\mathsf{A}(-,B(j))}",shift left = 7pt]
    \end{tikzcd}
  \end{equation}
  The vertical functon on the left,
  $E_j\colon\Comon[\mathcal{J},\mathcal{V}]\to\Comon(\mathcal{V})$, has a left
  adjoint $D_j$ given by $(D_j(c))(j')=0$ if $j\neq j'$ and $(D_j(b))(j)=c$, for
  a comonoid $c\in\mathcal{V}$.
  Similarly, the vertical functor on the right, $E_j^{\mathrm{op}}$ has a left
  adjoint $L_j$ given by $(L_j(a))(j')=1$ if $j\neq j'$ and $(L_j(a))(j)=a$, for
  a monoid $a\in\mathcal{V}$.
  It is clear that $L_j[c,B(j)]\cong [D_j(c),B]$, both taking the value
  $[c,B(j)]$ at $j\in\mathcal{J}$ and the value $1$ at all other objects. We
  deduce that their right adjoints are isomorphic, which is what we wanted.

  The construction of the isomorphism~\eqref{eq:113} ensures that it is
  compatible with the units and counits of the adjunctions depicted horizontally
  in the diagram, in the sense that it
  makes $E_j$ a pseudomorphism of adjunctions.

In order to show that $E_j$ gives rise to an enriched functor, we have to show
the commutativity of the folowing diagrams.
  \begin{equation}
    \begin{tikzcd}
      E_j\mathsf{M}(B,C)\otimes E_jM(A,B)
      \ar[d,"\cong"']\ar[r,"E_j(\mathrm{comp})"]
      &
      E_j\mathsf{M}(A,C)\ar[d,"\cong"]
      \\
      \mathsf{A}(B(j),C(j))\otimes \mathsf{A}(A(j),B(j))
      \ar[r,"\mathrm{comp}"]
      &
      \mathsf{A}(A(j),C(j))
    \end{tikzcd}
    \qquad
    \begin{tikzcd}
      E_jI\ar[r,"\mathrm{id}"]\ar[d,"1"']
      &
      E_j\mathsf{M}(A,A)\ar[d,"\cong"]
      \\
      I\ar[r,"\mathrm{id}"]
      &
      \mathsf{A}(A(j),A(j))
    \end{tikzcd}
  \end{equation}
  These are equivalent to the following diagrams, respectively, since the
  respective horizontal morphisms are obtained by composing with the unit and
  counit of the adjunctions, and these are compatible with the vertical
  isomorphisms by the comments above.
  \begin{equation}
    \begin{tikzcd}
      E_j[G\otimes H,A]\ar[r,"\cong"]
      \ar[d,"\cong"']
      &
      E_j[H,[G,A]]\ar[d,"\cong"]
      \\
      {[G(j)\otimes H(j),A(j)]}\ar[r,"\cong"]
      &
      {[H(j),[G(j),A(j)]]}
    \end{tikzcd}
    \qquad
    \begin{tikzcd}
      E_j[I,A]\ar[r,"\cong"]\ar[d,"\cong"']
      &
      E_jA\ar[d,"1"]
      \\
      {[i,A(j)]}\ar[r,"\cong"]
      &
      A(j)
    \end{tikzcd}
  \end{equation}
  Finally, the commutativity of the diagrams above is clear, from the fact that
  $E_j$ is strict monoidal and strict closed.
\end{proof}

\section{Duoidal structures on categories on functors}
\label{sec:duoid-struct-categ}

Let us denote by $\mathsf{Mult}$ the 2-category of multicategories. Under
certain circumstances, the 2-functor $\mathbb{A}\times\mi$ has a right adjoint
$(\mi)^{\mathbb{A}}$; i.e., the multicategory $\mathbb{A}$ is exponentiable. For
example, it suffices that $\mathbb{A}$ should be representable. A necessary and
sufficient condition is given in~\cite[Prop~2.8]{MR3263282} ($\mathbb{A}$ should
be promonoidal), from where we recall below the description of the
exponential. Details on representable multicategories can be found
in~\cite{RepresentableMulticats}.

We shall write $\mathbb{A}_u$ for the category of unary morphisms of the
multicategory $\mathbb{A}$.
If the exponential $\mathbb{B}^{\mathbb{A}}$ exists, its objects are functors
$\mathbb{A}_u\to\mathbb{B}_u$. A multimap $F_1,\dots,F_n\to G$ in
$\mathbb{B}^{\mathbb{A}}$ is a family of functions
\begin{equation}
  \label{eq:29}
  \alpha(a_1,\dots,a_n;b)\colon
  \mathbb{A}(a_1,\dots,a_n;b)\to
  \mathbb{B}(F_1(a_1),\dots,F_n(a_n);G(b))
\end{equation}
natural in each $b,a_i\in\mathbb{A}_u$. The composition of multimaps is defined
using the composition in $\mathbb{B}$, in the only reasonable way.

\begin{lemma}
  \label{l:4}
  If the multicategory $\mathbb{A}$ is exponentiable, then the 2-functor
  $(\mi)^{\mathbb{A}}\colon \mathsf{Mult}\to\mathsf{Mult}$ is lax monoidal (with
  respect to the cartesian product).
\end{lemma}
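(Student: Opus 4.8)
The plan is to obtain the lax monoidal structure on $(\mi)^{\mathbb{A}}$ by running the standard \emph{doctrinal adjunction} argument inside the 2-category $\mathsf{Mult}$: first equip the left adjoint $\mathbb{A}\times(\mi)$ with a canonical \emph{oplax} monoidal structure, and then transport it across the 2-adjunction $\mathbb{A}\times(\mi)\dashv(\mi)^{\mathbb{A}}$ (available since $\mathbb{A}$ is exponentiable) via the mate calculus, which turns oplax monoidal structures on a left adjoint into lax monoidal structures on its right adjoint.

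First I would record that $(\mathsf{Mult},\times)$ is a cartesian monoidal 2-category, with monoidal unit the terminal multicategory $\mathbf{1}$, so that every multicategory $\mathbb{A}$ is canonically a comonoid with comultiplication the diagonal $\Delta_{\mathbb{A}}\colon\mathbb{A}\to\mathbb{A}\times\mathbb{A}$ and counit the unique 2-functor $\mathbb{A}\to\mathbf{1}$. Tensoring a comonoid with $(\mi)$ always produces an oplax monoidal functor, so $\mathbb{A}\times(\mi)\colon\mathsf{Mult}\to\mathsf{Mult}$ acquires the oplax structure
\begin{equation*}
  \mathbb{A}\times(\mathbb{X}\times\mathbb{Y})
  \xrightarrow{\Delta_{\mathbb{A}}\times 1}
  (\mathbb{A}\times\mathbb{A})\times(\mathbb{X}\times\mathbb{Y})
  \cong
  (\mathbb{A}\times\mathbb{X})\times(\mathbb{A}\times\mathbb{Y}),
  \qquad
  \mathbb{A}\times\mathbf{1}\cong\mathbb{A}\to\mathbf{1}.
\end{equation*}
These comparison 2-functors are exactly the canonical ones built from projections; the associativity and unit coherence diagrams for this oplax structure coincide with the (co)associativity and (co)unit laws of the comonoid $\mathbb{A}$ and hold automatically.

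Next I would pass to mates. Using the unit $\eta$ and counit $\varepsilon$ of $\mathbb{A}\times(\mi)\dashv(\mi)^{\mathbb{A}}$, the two oplax cells above transpose to 2-natural transformations $\mathbb{B}^{\mathbb{A}}\times\mathbb{C}^{\mathbb{A}}\to(\mathbb{B}\times\mathbb{C})^{\mathbb{A}}$ and $\mathbf{1}\to\mathbf{1}^{\mathbb{A}}$ — e.g.\ the binary one is the composite $\mathbb{B}^{\mathbb{A}}\times\mathbb{C}^{\mathbb{A}}\xrightarrow{\eta}\bigl(\mathbb{A}\times(\mathbb{B}^{\mathbb{A}}\times\mathbb{C}^{\mathbb{A}})\bigr)^{\mathbb{A}}\to\bigl((\mathbb{A}\times\mathbb{B}^{\mathbb{A}})\times(\mathbb{A}\times\mathbb{C}^{\mathbb{A}})\bigr)^{\mathbb{A}}\xrightarrow{(\varepsilon\times\varepsilon)^{\mathbb{A}}}(\mathbb{B}\times\mathbb{C})^{\mathbb{A}}$. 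The defining property of the mate correspondence converts the oplax coherence axioms for $\mathbb{A}\times(\mi)$ into the lax coherence axioms for $(\mi)^{\mathbb{A}}$ verbatim, which is the content of doctrinal adjunction and is valid in any 2-category, $\mathsf{Mult}$ included; this delivers the claimed lax monoidal 2-functor structure.

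I expect the only genuine point requiring care to be the 2-categorical bookkeeping: verifying that the exponential's universal property is a bona fide 2-adjunction (as in the reference establishing exponentiability) so that the mate calculus yields honest \emph{2}-natural comparisons rather than mere object-wise-natural families, and matching our conventions for ``lax monoidal 2-functor''; both are routine. I would also add, as a remark, that since $(\mi)^{\mathbb{A}}$ is a right 2-adjoint it preserves all 2-limits, in particular finite products and the terminal object, so the comparison 2-functors are in fact equivalences and $(\mi)^{\mathbb{A}}$ is even strong monoidal with respect to the cartesian structure — only the lax statement is used downstream. Alternatively, one can skip doctrinal adjunction entirely and write the comparison by hand from the explicit description of $\mathbb{B}^{\mathbb{A}}$ recalled before the lemma: send $(F,G)$ to $\langle F,G\rangle\colon\mathbb{A}_u\to\mathbb{B}_u\times\mathbb{C}_u$ and a compatible pair of families as in~\eqref{eq:29} to their pairing, using $(\mathbb{B}\times\mathbb{C})(a_1,\dots,a_n;b)=\mathbb{B}(a_1,\dots,a_n;b)\times\mathbb{C}(a_1,\dots,a_n;b)$, after which functoriality and coherence are immediate.
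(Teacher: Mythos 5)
Your argument is exactly the paper's: the unique (diagonal) comonoid structure on $\mathbb{A}$ makes $\mathbb{A}\times(\mi)$ (op)lax --- in fact strong --- monoidal, and doctrinal adjunction transfers this to a lax monoidal structure on the right adjoint $(\mi)^{\mathbb{A}}$. Your additional remarks (the comparisons being invertible, the explicit hands-on description) are correct but not needed.
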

\begin{proof}
  The (unique) comonoid structure of $\mathbb{A}$ makes $\mathbb{A}\times\mi$
  into a strong monoidal functor. Then, its right adjoint is lax monoidal.
\end{proof}
The monoidal constraints in the lemma above are the
$\mathbb{B}^{\mathbb{A}}\times\mathbb{C}^{\mathbb{A}}\to
(\mathbb{B}\times\mathbb{C})^\mathbb{A}$ which send some $(F,G)$ to the functor $a\mapsto
(F(a),G(a))$ similarly to \cref{eq:canonical}.

By a monoidal multicategory we mean a pseudomonoid in $\mathsf{Mult}$. This is
a multicategory $\mathbb{M}$ equipped with multicategory functors
$P\colon\mathbb{M}\times\mathbb{M}\to\mathbb{M}\leftarrow \mathbf{1}\colon J$
(where $\mathbf{1}$ is the terminal multicategory), together with invertible
natural transformations $P(P\times 1)\cong P(1\times P)$,
$P(J\times 1)\cong 1\cong P(1\times J)$ satisfying axioms analogous to those of
the definition of monoidal category. In fact, this is equivalent to requiring
that these natural transformations should make the category $\mathbb{M}_u$ of
unary morphisms into a monoidal category.

\begin{cor}
  \label{cor:2}
  The 2-functor $(\mi)^{\mathbb{A}}$ preserves monoidal multicategory
  structures. If $\mathbb{B}$ is a monoidal multicategory, then so is
  $\mathbb{B}^{\mathbb{A}}$, with tensor product defined pointwise.
\end{cor}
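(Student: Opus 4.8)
The plan is to reduce the statement to the formal principle that a lax monoidal 2-functor sends pseudomonoids to pseudomonoids. Recall from the paragraph preceding the statement that a monoidal multicategory is, by definition, a pseudomonoid in the cartesian monoidal 2-category $(\mathsf{Mult},\times,\mathbf{1})$, and that such a pseudomonoid structure on $\mathbb{M}$ is the same as a monoidal structure on the category $\mathbb{M}_u$ of unary morphisms compatible with the multicategory composition. By Lemma~\ref{l:4}, the 2-functor $(\mi)^{\mathbb{A}}\colon\mathsf{Mult}\to\mathsf{Mult}$ is lax monoidal for the cartesian product, with monoidal constraints the comparison maps $\mathbb{B}^{\mathbb{A}}\times\mathbb{C}^{\mathbb{A}}\to(\mathbb{B}\times\mathbb{C})^{\mathbb{A}}$ described after that lemma, together with the essentially unique map $\mathbf{1}\to\mathbf{1}^{\mathbb{A}}$ (the latter multicategory being terminal). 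Since applying a lax monoidal 2-functor to a pseudomonoid produces a pseudomonoid — the 2-categorical analogue of the elementary fact that lax monoidal functors preserve monoids — the image of a monoidal multicategory $\mathbb{B}$ under $(\mi)^{\mathbb{A}}$ is again a monoidal multicategory.

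Concretely, if $\mathbb{B}$ carries tensor $P\colon\mathbb{B}\times\mathbb{B}\to\mathbb{B}$, unit $J\colon\mathbf{1}\to\mathbb{B}$ and coherence 2-cells, then $\mathbb{B}^{\mathbb{A}}$ acquires the tensor $P^{\mathbb{A}}$ precomposed with the comparison $\mathbb{B}^{\mathbb{A}}\times\mathbb{B}^{\mathbb{A}}\to(\mathbb{B}\times\mathbb{B})^{\mathbb{A}}$, the unit $J^{\mathbb{A}}$ precomposed with $\mathbf{1}\to\mathbf{1}^{\mathbb{A}}$, and coherence 2-cells obtained from those of $\mathbb{B}$ by conjugating with the monoidal constraints of $(\mi)^{\mathbb{A}}$; the pentagon and triangle identities for $\mathbb{B}^{\mathbb{A}}$ follow from those of $\mathbb{B}$ and the coherence axioms of the lax monoidal 2-functor. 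It then remains to identify this transported structure as the pointwise one, which I would do by unwinding the explicit formula for the comparison map: it sends $(F,G)$ to the functor $a\mapsto(F(a),G(a))$, so the tensor of $F,G\in\mathbb{B}^{\mathbb{A}}$ is $a\mapsto P(F(a),G(a))$ and acts on multimaps componentwise, while the unit is the constant functor at $J\in\mathbb{B}_u$. On unary morphisms, $(\mathbb{B}^{\mathbb{A}})_u = [\mathbb{A}_u,\mathbb{B}_u]$ then carries precisely the pointwise monoidal structure induced by the monoidal category $\mathbb{B}_u$, which by the remark recalled above is exactly the data of the monoidal-multicategory structure, and its coherence is manifest.

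I do not expect a genuine obstacle here: the argument is purely formal, resting on Lemma~\ref{l:4} together with the preservation of pseudomonoids by lax monoidal 2-functors. The only point requiring attention is the bookkeeping that translates the abstractly transported pseudomonoid structure into the concrete pointwise description, and this is immediate once the explicit formula for the monoidal constraint of $(\mi)^{\mathbb{A}}$ — recalled after Lemma~\ref{l:4} — is in hand.
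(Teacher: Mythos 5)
Your argument is correct and is exactly the one the paper intends: the corollary is stated without proof precisely because it follows from Lemma~\ref{l:4} via the standard principle that lax monoidal 2-functors preserve pseudomonoids, with the pointwise description read off from the explicit monoidal constraints $\mathbb{B}^{\mathbb{A}}\times\mathbb{C}^{\mathbb{A}}\to(\mathbb{B}\times\mathbb{C})^{\mathbb{A}}$. Your additional bookkeeping identifying the transported tensor on $(\mathbb{B}^{\mathbb{A}})_u=[\mathbb{A}_u,\mathbb{B}_u]$ with the pointwise monoidal structure is a faithful elaboration of what the paper leaves implicit.
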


Let us denote by $\nc{MonCat}_\ell$ the 2-category of monoidal categories and lax
monoidal morphisms. The fully faithful 2-functor $\nc{MonCat}_\ell\to\mathsf{Mult}$
that sends a monoidal category to its associated (representable) multicategory
preserves cartesian products. Therefore, it preserves pseudomonoids,
establishing an identification between duoidal categories and monoidal representable
multicategories.

As mentioned before, if a multicategory $\mathbb{A}$ is represented by a
monoidal category $\mathcal{A}$, then $\mathbb{B}^{\mathbb{A}}$
exists. Futhermore, \cite[Prop.~2.12]{MR3263282} shows that when $\mathbb{B}$ is
represented by a cocomplete monoidal category $\mathcal{B}$ (by which we mean
that $\mathcal{B}$ is cocomplete and its tensor product is cocontinuous in each variable), and $\mathcal{A}$
is small, then
$\mathbb{B}^{\mathbb{A}}$ is representable by the usual functor category
$[\mathcal{A},\mathcal{B}]$ equipped with the convolution monoidal structure \cref{eq:dayconv}.

Due to its expression as a Kan extension, the convolution of two
functors $A,B\colon\mathcal{A}\to\mathcal{C}$ is another such functor $A\conv
B$, equipped with a universal natural transformation
$\eta_{a,b}^{A,B}\colon A(a)\otimes B(b)\to (A\conv B)(a\otimes b)$.
Here the universality means that any transformation
$A(a)\otimes B(b)\to C(a\otimes b)$ factors through a unique transformation
$A\conv B\Rightarrow C$.

\begin{thm}
  \label{thm:3}
  Let $(\mathcal{A},\otimes,k)$ be a small monoidal category and
  $(\mathcal{B},\diamond,i,\square,j)$ a duoidal category
  (with the notation of Definition~\ref{def:duoidal}). Assume that
  $(\mathcal{B},\diamond, i)$ is a cocomplete monoidal category.
  Then, $[\mathcal{A},\mathcal{B}]$
  has a duoidal structure $(\conv,J,\square,j)$ where $(\square,j)$ is the
  pointwise monoidal structure and $(\conv,J)$ is the convolution of
  $(\otimes,k)$ with $(\diamond,i)$.
\end{thm}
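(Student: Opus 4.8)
The plan is to read the statement off the machinery assembled just above: duoidal categories are the same thing as monoidal representable multicategories, and the exponential $2$-functor $(\mi)^{\mathbb{A}}$ preserves monoidal multicategory structures (Corollary~\ref{cor:2}). So the proof is essentially an exercise in transporting a pseudomonoid structure across the relevant equivalences.

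First I would translate the hypotheses into multicategorical language. The monoidal category $(\mathcal{B},\diamond,i)$ represents a multicategory $\mathbb{B}$, and saying that $(\mathcal{B},\diamond,i,\square,j)$ is duoidal says exactly, under the full and faithful, product-preserving $2$-functor $\nc{MonCat}_\ell\to\mathsf{Mult}$, that $\square\colon\mathcal{B}\times\mathcal{B}\to\mathcal{B}$ and $j\colon\mathbf{1}\to\mathcal{B}$, being lax $\diamond$-monoidal, together with the associativity and unit constraints of $(\square,j)$, being $\diamond$-monoidal natural transformations, make $\mathbb{B}$ into a pseudomonoid in $\mathsf{Mult}$, i.e. a monoidal multicategory with tensor product $\square$ and unit $j$. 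Likewise the small monoidal category $(\mathcal{A},\otimes,k)$ represents a multicategory $\mathbb{A}$, which, being representable, is exponentiable.

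Next I would apply Corollary~\ref{cor:2} to the monoidal multicategory $\mathbb{B}$ and the exponentiable $\mathbb{A}$: the exponential $\mathbb{B}^{\mathbb{A}}$ is again a monoidal multicategory, with tensor product computed pointwise from that of $\mathbb{B}$, hence from $\square$. Since $\mathbb{A}$ is represented by the small $\mathcal{A}$ and $\mathbb{B}$ by the cocomplete monoidal category $(\mathcal{B},\diamond,i)$, the cited \cite[Prop.~2.12]{MR3263282} identifies $\mathbb{B}^{\mathbb{A}}$ with the representable multicategory of the functor category $[\mathcal{A},\mathcal{B}]$ equipped with the convolution monoidal structure $(\conv,J)$ of $(\otimes,k)$ with $(\diamond,i)$; this is exactly where the cocompleteness hypothesis enters, guaranteeing the defining left Kan extensions. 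Transporting the monoidal multicategory structure of $\mathbb{B}^{\mathbb{A}}$ along this identification and reading it back through the equivalence ``monoidal representable multicategory $=$ duoidal category'' then yields a duoidal structure on $[\mathcal{A},\mathcal{B}]$ whose base monoidal structure is $(\conv,J)$ and whose second monoidal structure is the tensor product of $\mathbb{B}^{\mathbb{A}}$, computed pointwise from $\square$, hence the pointwise structure $(\square,j)$. That is the asserted structure $(\conv,J,\square,j)$.

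The part requiring genuine care, rather than invoking black boxes, is the bookkeeping in the final step: checking that the pointwise tensor product produced abstractly by Corollary~\ref{cor:2} on $\mathbb{B}^{\mathbb{A}}$ matches $(F\square G)(a)=F(a)\square G(a)$ on the representing functor category, and that the interchange law one reads off is the natural transformation obtained from $\zeta$ on $\mathcal{B}$ by the universal property of convolution, i.e. factoring the composite of the universal maps $A(a)\otimes B(b)\to (A\conv B)(a\otimes b)$ with $\zeta$ through $\conv$. These verifications are formal once the identifications are set up, and — in contrast with Proposition~\ref{prop:duoidalpresheaves} — none of them requires $\mathcal{B}$ to be braided, which is the added generality gained here.
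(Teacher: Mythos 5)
Your proposal is correct and follows essentially the same route as the paper: pass to the associated multicategories, observe that $(\square,j)$ makes $\mathbb{B}$ a monoidal multicategory, apply Corollary~\ref{cor:2} to get the pointwise monoidal structure on $\mathbb{B}^{\mathbb{A}}$, and identify $\mathbb{B}^{\mathbb{A}}$ with $([\mathcal{A},\mathcal{B}],\conv,J)$ via \cite[Prop.~2.12]{MR3263282}. Your closing remarks on the explicit form of the interchange law match the paper's post-proof description in \eqref{eq:30}--\eqref{eq:31}.
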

\begin{proof}
  Let $\mathbb{A}$ and $\mathbb{B}$ be the multicategories associated to
  $(\mathcal{A},\otimes,k)$ and $(\mathcal{B},\diamond,i)$. The structure
  $(\square,j)$ is lax monoidal with respect to $(\diamond,i)$, giving rise to a
  monoidal multicategory $\mathbb{B}$, with tensor $\square$. 
  The monoidal
  structure on the representable multicategory $\mathbb{B}^{\mathbb{A}}$ is given pointwise by
  $\square$; see Corollary~\ref{cor:2}. The corresponding pseudomonoid structure
  on $([\mathcal{A},\mathcal{B}],\conv,J)$ in $\nc{MonCat}_\ell$ is the pointwise
  tensor product given by $\square$. We have obtained the required duoidal
  category.
\end{proof}
One can explicitly describe the interchange law
\begin{equation}
  \label{eq:30}
  (A\square B)\conv(C\square D)
  \Rightarrow(A\conv C)\square (B\conv D)
\end{equation}
as the natural transfomation that corresponds to the natural transformation that
follows, where $\zeta$ denotes the intechange law of $\mathcal{B}$.
\begin{equation}
  \label{eq:31}
\begin{tikzcd}[column sep=.1in]
 (A\square B)(a)\diamond(C\square D)(b)
  =
  \bigl(A(a)\square B(a)\bigr)\diamond \bigl(C(b)\square D(b)\bigr)\ar[r,"\zeta"] &
  \bigl(A(a)\diamond C(b)\bigr)\square \bigl(B(a)\diamond D(b)\bigr)\ar[d,"\eta_{a,b}^{A,C}\otimes\eta_{a,b}^{B,D}"] \\
&  (A\conv C)(a\otimes b)\otimes (B\conv D)(a\otimes b). 
\end{tikzcd}
\end{equation}
\begin{cor}
  \label{cor:3}
  Let $(\mathcal{A},\otimes, k)$ be a small monoidal category and
  $(\mathcal{B},\square,i,\gamma)$ a braided monoidal category. Then,
  $[\mathcal{A},\mathcal{B}]$ has a duoidal structure $(\conv,J,\square,i)$
  where $(\square,i)$ is the pointewise monoidal structure and $(\conv,J)$ is
  the convolution of $(\otimes,k)$ with $(\square,i)$.
\end{cor}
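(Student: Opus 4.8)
This is the statement already announced as Proposition~\ref{prop:duoidalpresheaves}, now to be obtained as a special case of Theorem~\ref{thm:3}; the plan is to feed $\mathcal{B}$ into that theorem after exhibiting it as a duoidal category. First I would recall the standard observation, mentioned in the introduction, that any braided monoidal category $(\mathcal{B},\square,i,\gamma)$ underlies a duoidal category $(\mathcal{B},\diamond,i,\star,j)=(\mathcal{B},\square,i,\square,i)$ in which the two monoidal structures coincide, the maps $\delta_i,\mu_j,\iota$ of \eqref{eq:deltamuiota} are all identities, and the interchange law \eqref{eq:interchange} is
\[
\zeta_{a,b,c,d}\colon (a\square b)\square(c\square d)\longrightarrow (a\square c)\square(b\square d),
\]
obtained by re-associating and applying $\gamma_{b,c}$ to the middle pair of objects. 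Verifying that this data satisfies the axioms of Definition~\ref{def:duoidal} is a routine coherence computation: the lax $\diamond$-monoidal functor axioms for $(\star,j)$, and the requirement that the associativity and unit constraints of $(\star,j)$ be $\diamond$-monoidal natural transformations, reduce to the hexagon identities and naturality of $\gamma$ together with the coherence axioms of $(\mathcal{B},\square,i)$.

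With that in hand, I would apply Theorem~\ref{thm:3} to the small monoidal category $(\mathcal{A},\otimes,k)$ and to this duoidal $\mathcal{B}$. Its only hypothesis is that $(\mathcal{B},\diamond,i)$ be a cocomplete monoidal category, i.e.\ cocomplete with $\diamond$ cocontinuous in each variable; here $(\mathcal{B},\diamond,i)=(\mathcal{B},\square,i)$, and this is precisely the standing assumption (as in Proposition~\ref{prop:duoidalpresheaves}) under which the convolution of $(\otimes,k)$ with $(\square,i)$ exists at all --- see \eqref{eq:dayconv} --- so it is no genuine restriction. Theorem~\ref{thm:3} then equips $[\mathcal{A},\mathcal{B}]$ with a duoidal structure $(\conv,J,\square,j)=(\conv,J,\square,i)$ whose $\square$-part is the pointwise monoidal structure and whose $\conv$-part is the convolution of $(\otimes,k)$ with $(\diamond,i)=(\square,i)$. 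Since $\diamond=\square$, this is exactly the structure asserted, and the proof is complete.

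I do not expect any genuine obstacle. All the substantive work --- the multicategorical exponential of Lemma~\ref{l:4} and Corollary~\ref{cor:2}, the identification of duoidal categories with monoidal representable multicategories, and the Kan-extension description of convolution together with the interchange formula \eqref{eq:31} --- is already carried out inside Theorem~\ref{thm:3}. The only things that deserve an explicit word are that a braiding really does supply an interchange law obeying the duoidal axioms, as sketched above, and that the convolution produced by Theorem~\ref{thm:3} in the degenerate case $\diamond=\square$ coincides term by term with the convolution of $(\otimes,k)$ with $(\square,i)$, which is immediate from the coend formula \eqref{eq:dayconv}.
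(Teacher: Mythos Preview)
Your proposal is correct and matches the paper's approach exactly: the corollary is stated without proof, being an immediate specialisation of Theorem~\ref{thm:3} via the standard observation (already noted in the paper) that a braided monoidal category is duoidal with $\diamond=\star$ and interchange given by braiding the middle pair. Your remark about the implicit cocompleteness hypothesis is apt, since the paper's statement of the corollary omits it even though it is needed for the convolution to exist.
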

In the case of the corollary, the interchange law~\eqref{eq:30} corresponds to
the following transformation.
\begin{equation}\label{eq:31a}
\begin{tikzcd}[column sep=.7in]
 A(a)\square B(a)\square C(b)\square D(b)\ar[dr,dashed,bend right=5]\ar[r,"1\otimes\gamma_{B(a),C(b)}\otimes 1"] &
 A(a)\square C(b)\square B(a)\square D(b)\ar[d,"\eta_{a,b}^{A,C}\otimes\eta_{a,b}^{B,D}"] \\
 & (A\conv C)(a\otimes b)\otimes (B\conv D)(a\otimes b).
 \end{tikzcd}
\end{equation}

\printbibliography

\end{document}